\documentclass{amsart}

\usepackage{latexsym,amsmath,amssymb,amsfonts,amscd,graphics,appendix,amsxtra}
\usepackage[mathscr]{eucal}

\newtheorem{theorem}{Theorem}[section]
\newtheorem{proposition}[theorem]{Proposition}
\newtheorem{lemma}[theorem]{Lemma}
\newtheorem{claim}[theorem]{Claim}
\newtheorem{corollary}[theorem]{Corollary}

\newtheorem{D}[theorem]{Definition}
\newenvironment{definition}{\begin{D} \rm }{\end{D}}
\newtheorem{R}[theorem]{Remark}
\newenvironment{remark}{\begin{R}\rm }{\end{R}}
\newtheorem{E}[theorem]{Example}
\newenvironment{example}{\begin{E}\rm }{\end{E}}

\def\Zee{\mathbb{Z}}
\def\Q{\mathbb{Q}}
\def\Ar{\mathbb{R}}
\def\Cee{\mathbb{C}}
\def\Pee{\mathbb{P}}

\def\Ker{\operatorname{Ker}}

\def\Aut{\operatorname{Aut}}
\def\Pic{\operatorname{Pic}}

\def\scrO{\mathcal{O}}
\def\spcheck{^{\vee}}

\title[the ample cone of a rational surface]{On the ample cone of a rational surface \\
with an anticanonical cycle} 
\date{\today}
\begin{document}
\author[R. Friedman]{Robert Friedman}
\address{Department of Mathematics, Columbia University,  New York, NY 10027}
\email{rf@math.columbia.edu}
\keywords{rational surface, anticanonical cycle, exceptional curve, ample cone}
\subjclass{14J26}

\begin{abstract}
Let $Y$ be a smooth rational surface and let $D$ be a cycle of rational curves on $Y$ which is an anticanonical divisor, i.e.\  an element of $|-K_Y|$. Looijenga studied the geometry of such surfaces $Y$ in case $D$ has at most five components and identified a geometrically significant subset $R$ of the divisor classes of square $-2$ orthogonal to the components of $D$. Motivated by recent work of Gross, Hacking, and Keel on the global Torelli theorem for pairs $(Y,D)$, we attempt to generalize some of Looijenga's results in case $D$ has more than five components. In particular, given an integral isometry $f$ of $H^2(Y)$ which preserves the classes of the components of $D$, we investigate the relationship between the condition that $f$ preserves the ``generic" ample cone of $Y$ and the condition that $f$ preserves the set $R$.
\end{abstract}

\maketitle

\section*{Introduction}

The ample cone of a del Pezzo surface $Y$ (or rather the associated dual polyhedron) was studied classically by, among others, Gosset, Schoute, Kantor, Coble, Todd, Coxeter, and Du Val. For a brief historical discussion, one can consult the remarks in \S11.x of \cite{Coxeter}. From this point of view, the lines on $Y$ are the main object of geometric interest, as they are the walls of the ample cone or the vertices of  the  dual polyhedron. The corresponding root system (in case $K_Y^2 \leq 6$) only manifests itself geometrically by allowing del Pezzo surfaces with rational double points, or equivalently smooth surfaces $Y$ with $-K_Y$ nef and big but not ample. This is explicitly worked out in Part II of Du Val's series of papers \cite{duV}. On the other hand, the root system, or rather its Weyl group, appears for a smooth del Pezzo surface as a group of symmetries of the ample cone, a fact which (in a somewhat different guise) was already known to Cartan. Perhaps the culmination of the classical side of the story is Du Val's 1937 paper \cite{duV2}, where he also systematically considers the blowup of $\Pee^2$ at $n\geq 9$ points. In modern times, Manin explained the appearance of the Weyl group by noting that the orthogonal complement to $K_Y$ in $H^2(Y;\Zee)$ is a root lattice $\Lambda$. Moreover, given any root of $\Lambda$, in other words an element $\beta$ of square $-2$, there exists a deformation of $Y$ for which $\beta = \pm[C]$, where $C$ is a smooth rational curve of self-intersection $-2$. For modern expositions of the theory, see for example Manin's book \cite{Manin} or Demazure's account in \cite{777}. 

In general, it seems hard to study an arbitrary rational surface $Y$ without imposing some extra conditions.  One very natural condition is that $-K_Y$ is effective, i.e.\ that $-K_Y = D$ for an effective divisor $D$.  In case the intersection matrix of $D$ is negative definite, such pairs $(Y,D)$ arise naturally in the study of minimally elliptic singularities: the case where $D$ is a smooth elliptic curve corresponds to the case of simple elliptic singularities, the case where $D$ is a nodal curve or a cycle of smooth rational curves meeting transversally corresponds to the case of cusp singularities, and the case where $D$ is reduced but has  one component with a cusp, two components with a  tacnode  or   three components meeting at a point, corresponds to triangle singularities. From this point of view, the case where $D$ is a cycle of rational curves is the most plentiful.  The systematic study of such surfaces in case the intersection matrix of $D$ is negative definite dates back to Looijenga's seminal paper \cite{Looij}.  However, for various technical reasons, most of the results of that paper are proved under the assumption that the number of components in the cycle is at most $5$. Some of the main points of \cite{Looij} are as follows: Denote by $R$ the set of elements in $H^2(Y; \Zee)$ of square $-2$ which are orthogonal to the components of $D$ and which are of the form $\pm [C]$, where $C$ is a smooth rational curve disjoint from $D$, for some deformation of the pair $(Y,D)$. In terms  of deformations of singularities, the set $R$ is related to the possible rational double point singularities which can arise as deformations of the   dual cusp to the cusp singularity corresponding to $D$. Looijenga noted that, in general,  there exist elements in $H^2(Y; \Zee)$ of square $-2$ which are orthogonal to the components of $D$ but which do not lie in $R$.  Moreover, reflections in elements of the set $R$ give symmetries of the ``generic" ample cone (which is the same as the ample cone in case there are no smooth rational curves on $Y$ disjoint from $D$). Finally, still under the assumption of at most 5 components, any isometry of $H^2(Y; \Zee)$ which preserves the  positive cone, the classes $[D_i]$ and the set $R$, preserves the generic ample cone. 

This paper, which is an attempt to see how much of \cite{Looij} can be generalized to the case of arbitrarily many components, is motivated by a question raised by the recent work of Gross, Hacking and Keel \cite{GHK} on, among matters, the global Torelli theorem for pairs $(Y,D)$ where $D$ is an anticanonical cycle on the rational surface $Y$. In order to formulate this theorem in a fairly general way, one would like  to characterize the isometries $f$ of $H^2(Y, \Zee)$, preserving the  positive cone and fixing the classes $[D_i]$, which preserve the ample cone of $Y$. It is natural to ask if, at least in the generic case, the condition  that  $f(R)=R$ is sufficient. In this paper, we give various criteria on $R$ which insure that, if an isometry $f$ of $H^2(Y; \Zee)$   preserves the positive cone, the classes $[D_i]$ and the set $R$, then $f$ preserves the generic ample cone. Typically, one needs a hypothesis which says that $R$ is large. For example, one such hypothesis is that there is a subset of $R$ which spans a negative definite codimension one subspace of the orthogonal complement to the components of $D$.  
In theory, at least under various extra hypotheses, such a result gives a necessary and sufficient condition for an isometry to preserve the generic ample cone. In practice, however, the determination of the set $R$ in general is a difficult problem, which seems close in its complexity to the problem of describing the generic ample cone of $Y$. Finally, we show that some assumptions on $(Y,D)$ are necessary, by giving examples where $R=\emptyset$, so that the condition that an isometry $f$ preserves $R$ is automatic, and of isometries $f$ such that $f$ preserves the   positive cone, the classes $[D_i]$ and (vacuously) the set $R$, but $f$ does not preserve the generic ample cone. We do not yet have a good  understanding  of the  relationship between preserving the ample cone and preserving the set $R$.

An outline of this paper is as follows. The preliminary Section 1  reviews standard methods for constructing nef classes on algebraic surfaces and applies this to the study of when the normal surface obtained by contracting a negative definite anticanonical cycle on a rational surface is projective. In Section 2, we analyze the ample cone and generic ample cone of a pair $(Y,D)$ and show that  the set $R$ defined by Looijenga is exactly the set of elements $\beta$ in $H^2(Y; \Zee)$ of square $-2$ which are orthogonal to the components of $D$ such that reflection about $\beta$ preserves the generic ample cone. Much of the material of \S2 overlaps with results in \cite{GHK}, proved there by somewhat different methods.  Section 3 is devoted to giving various sufficient conditions for an isometry $f$ of $H^2(Y; \Zee)$ to preserve the generic ample cone, including the one described above. Section 4 gives  examples of pairs $(Y,D)$ satisfying the sufficient conditions of \S3 where the number of components of $D$ and the multiplicity $-D^2$ are arbitrarily large, as well as examples showing that some hypotheses on $(Y,D)$ are necessary.   

\medskip
\noindent\textbf{Acknowledgements.} It is a pleasure to thank Mark Gross, Paul Hacking and Sean Keel for access  to their manuscript \cite{GHK} and for extremely stimulating correspondence and conversations about these and other matters, and Radu Laza  for many helpful discussions.

\medskip
\noindent\textbf{Notation and conventions.} We work over $\Cee$. If $X$ is a smooth projective surface with $h^1(\scrO_X) = h^2(\scrO_X) = 0$ and $\alpha \in H^2(X; \Zee)$, we denote by $L_\alpha$ the corresponding holomorphic line bundle, i.e.\ $c_1(L_\alpha) = \alpha$. Given a curve $C$ or divisor class $G$ on $X$, we denote by $[C]$ or $[G]$ the corresponding element of $H^2(X; \Zee)$. Intersection pairing on curves or divisors, or on elements in the second cohomology of a smooth surface (viewed as a canonically oriented $4$-manifold) is denoted by multiplication.

\section{Preliminaries}

Throughout this paper,  $Y$ denotes a smooth rational surface with  $-K_Y = D= \sum_{i=1}^rD_i$ a (reduced) cycle of rational curves, i.e.\ each $D_i$ is a smooth rational curve and $D_i$ meets  $D_{i\pm 1}$ transversally, where $i$ is taken mod $r$, except for $r=1$, in which case $D_1=D$ is an irreducible nodal curve. We note, however, that many of the results in this paper can be generalized to the case where $D\in |-K_Y|$ is not assumed to be a cycle. The integer $r=r(D)$ is called the \textsl{length} of $D$. An \textsl{orientation} of $D$ is an orientation of the dual graph (with appropriate modifications in  case $r=1$). We shall abbreviate the data of the surface $Y$ and the oriented cycle $D$ by $(Y,D)$ and refer to it as a \textsl{anticanonical pair}. If the intersection matrix $(D_i\cdot D_j)$ is negative definite, we say that $(Y,D)$ is a \textsl{negative definite anticanonical pair}. 

\begin{definition}\label{defcurves} An irreducible curve $E$ on $Y$ is  an \textsl{exceptional curve} if $E\cong \Pee^1$, $E^2 = -1$, and $E \neq D_i$ for any $i$. An irreducible curve $C$ on $Y$ is  a \textsl{$-2$-curve}  if $C\cong \Pee^1$,  $C^2 = -2$, and $C \neq D_i$ for any $i$. Let   $\Delta_Y$ be the set of all $-2$-curves on $Y$, and let $\mathsf{W}({\Delta_Y})$ be the group of integral isometries of $H^2(Y; \Ar)$ generated by the reflections in the classes in the set  $\Delta_Y$.
\end{definition}

\begin{definition} Let $\Lambda = \Lambda(Y,D) \subseteq H^2(Y; \Zee)$ be the orthogonal complement of the lattice spanned by the classes $[D_i]$. Fixing the identification $\Pic^0D \cong \mathbb{G}_m$ defined by the orientation of the cycle $D$, we define the \textsl{period map} $\varphi_Y \colon \Lambda \to \mathbb{G}_m$ via: if $\alpha \in \Lambda$ and $L_\alpha$ is the corresponding line bundle, then $\varphi_Y(\alpha) \in \mathbb{G}_m$ is the image of the line bundle of multi-degree $0$ on $D$ defined by $L_\alpha|D$. Clearly $\varphi_Y$ is a homomorphism.
\end{definition}

By \cite{Looij}, \cite{FriedmanScattone}, \cite{Fried2}, we have:

\begin{theorem}\label{surjper} The period map is surjective. More precisely, given $Y$ as above and given an arbitrary homomorphism $\varphi \colon \Lambda \to\mathbb{G}_m$, there exists a deformation of the pair $(Y,D)$ over a smooth connected base, which we can take to be a product of $\mathbb{G}_m$'s, such that the monodromy of the family is trivial and there exists a fiber of the deformation, say $(Y', D')$ such that, under the induced identification of $\Lambda(Y',D')$ with $\Lambda$, $\varphi_{Y'} = \varphi$. \qed
\end{theorem}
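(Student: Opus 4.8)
The plan is to produce the required deformation as an explicit simultaneous blowup of a trivial family of toric surfaces, and to identify the resulting family of period points with a homomorphism of algebraic tori, whose surjectivity reduces to a rank statement over $\Zee$. First I would reduce to the case in which $(Y,D)$ admits a \emph{toric model}: a birational morphism $p\colon Y\to\bar Y$ onto a smooth projective toric surface with toric boundary $\bar D$, where $p$ is a composition of blowups at smooth points of the successive proper transforms of the boundary and $p_*D=\bar D$. By the structure theory of anticanonical pairs (see \cite{GHK}, and compare \cite{Looij}), $(Y,D)$ is deformation equivalent, through anticanonical pairs of the given combinatorial type, to such a pair, and moving along such a deformation does not affect the assertion; so assume $(Y,D)$ itself carries a toric model, with centers $q_1,\dots,q_n$ in the smooth locus of $\bar D$, no two infinitely near. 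Sliding each $q_j$ over the torus orbit $\bar D^{\,\circ}_{i(j)}\cong\mathbb{G}_m$ of the boundary component carrying it, and taking proper transforms, produces a family $\pi\colon(\mathcal Y,\mathcal D)\to B$, where $B$ is (an open subset of) a product of copies of $\mathbb{G}_m$, obtained as the blowup of $\bar Y\times B$ along the sections $b\mapsto q_j(b)$. Since $\pi$ is a blowup of a product of surfaces along disjoint sections, $R^2\pi_*\Zee$ is canonically constant: the monodromy is trivial, $H^2(\mathcal Y_b;\Zee)$ is identified throughout with $p^*\Pic(\bar Y)\oplus\bigoplus_j\Zee[E_j]$ compatibly with the classes $[\mathcal D_{i,b}]$, and $\Lambda(\mathcal Y_b,\mathcal D_b)$ with a fixed lattice $\Lambda$. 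Moreover $\mathcal D_b$, being the blowup of $\bar D$ at smooth points, is isomorphic via $p$ as an oriented cycle of $\Pee^1$'s to $\bar D$, so $\Pic^0(\mathcal D_b)\cong\Pic^0(\bar D)\cong\mathbb{G}_m$ independently of $b$.

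Now I would compute the period. Writing $\alpha=p^*\bar\alpha-\sum_j a_j[E_j]\in\Lambda$ and restricting $L_\alpha$ to $\mathcal D_b$, the term $p^*L_{\bar\alpha}$ contributes the line bundle $L_{\bar\alpha}|_{\bar D}$, which does not depend on $b$, while $\scrO(-a_jE_j)$ contributes, on the single component $\bar D_{i(j)}$ that $E_j$ meets, the divisor $-a_jq_j$, and nothing on the other components. The classical description of $\Pic^0$ of a cycle of $\Pee^1$'s then yields $\varphi_{\mathcal Y_b}(\alpha)=c_\alpha\cdot\prod_j q_j^{\,\varepsilon_j a_j}$, where $q_j$ denotes the $\mathbb{G}_m$-coordinate of the $j$-th center, $c_\alpha$ is independent of $b$, and $\varepsilon_j=\pm1$ depends only on the orientation of the cycle. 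Hence $b\mapsto\varphi_{\mathcal Y_b}$ is a translate of a homomorphism of algebraic tori $h\colon\prod_j\mathbb{G}_m\to\Hom(\Lambda,\mathbb{G}_m)$, and a homomorphism of tori is surjective exactly when the dual homomorphism of character lattices is injective; here this dual is, up to sign, $\alpha\mapsto(a_j)_j\colon\Lambda\to\bigoplus_j\Zee$, whose kernel is $\Lambda\cap p^*\Pic(\bar Y)$. By the projection formula this kernel is the pullback of $\{\bar\alpha\in\Pic(\bar Y):\bar\alpha\cdot\bar D_i=0\ \forall i\}$, and since on a smooth complete toric surface the boundary classes span $\Pic(\bar Y)$, a lattice on which the intersection form is nondegenerate, the kernel is $0$. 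Therefore $b\mapsto\varphi_{\mathcal Y_b}$ is surjective, and every $\varphi$ occurs as the period of some fibre.

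The main obstacle in this scheme is twofold. First, the identification $\varphi_{\mathcal Y_b}(\alpha)=c_\alpha\prod_j q_j^{\varepsilon_j a_j}$ requires pinning down the isomorphism $\Pic^0(\mathcal D_b)\cong\mathbb{G}_m$ attached to the orientation and then carefully restricting $L_\alpha$ to each component of the cycle and tracking the gluing scalars at the nodes, the delicate point being that the contributions not involving the $q_j$ assemble into a single $b$-independent constant. Second, promoting $B$ to a genuine product of copies of $\mathbb{G}_m$ with trivial monodromy, rather than an open subset of one — one must keep the centers distinct and off the nodes, and when $\Lambda$ is large relative to the length of $D$ the $q_j$ cannot all be placed on distinct boundary components — and, more basically, producing the preliminary deformation to a pair with a toric model, both require the finer construction of the universal deformation of $(Y,D)$, which is the substantive input from \cite{Looij}, \cite{FriedmanScattone}, \cite{Fried2}. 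Conceptually, the statement is the global form of the local Torelli phenomenon for these pairs, whose infinitesimal shadow is the isomorphism $\Theta_Y\otimes\scrO_Y(K_Y)\cong\Omega^1_Y$ together with $h^1(\Omega^1_Y)=b_2(Y)$; the toric picture above is a concrete way of exponentiating it to a family with an economical base.
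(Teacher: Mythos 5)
The paper does not prove Theorem~\ref{surjper}: it is stated with a reference to \cite{Looij}, \cite{FriedmanScattone}, \cite{Fried2}, so there is no internal argument to compare yours against, and I will assess your proposal on its own terms. Your strategy is the standard one (essentially the argument of \cite{GHK}, close to Looijenga's), and its core is sound: for a pair with a toric model $p\colon Y\to\bar Y$, the period of $\alpha=p^*\bar\alpha-\sum_j a_j[E_j]$ is a base-independent constant times $\prod_j q_j^{\pm a_j}$; a translate of a homomorphism of tori is surjective exactly when $\alpha\mapsto(a_j)_j$ is injective on $\Lambda$ (since $\Cee^*$ is divisible, $\Hom(-,\Cee^*)$ is exact); and the kernel $\Lambda\cap p^*\Pic(\bar Y)$ vanishes because the boundary classes generate $\Pic(\bar Y)$, on which the intersection form is unimodular. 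The gluing-scalar bookkeeping you worry about in your first ``obstacle'' does work out and is not a real issue.

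The other two points you flag are genuine gaps, not technicalities. First, the opening reduction is false as stated: an anticanonical pair need not be deformation equivalent, within its combinatorial type, to one admitting a toric model --- for $r(D)=1$ (the irreducible nodal case, used throughout this paper) no smooth toric surface has irreducible boundary. What is true (\cite{GHK}) is that the \emph{given} pair acquires a toric model after corner blowups, i.e.\ blowups at nodes of $D$; these leave $\Lambda$ and $\varphi_Y$ unchanged and identify the deformation problems, which is the reduction you actually want, and it sidesteps any appeal to connectedness of the moduli of pairs, a nontrivial theorem in its own right. Second, the collision problem genuinely breaks the argument as written: your base $B$ is the complement in $\prod_j\mathbb{G}_m$ of the subtori $\{q_j=q_k\}$, while the fibers of $h$ are cosets of $\ker h$ of dimension $n-\operatorname{rank}\Lambda$ (typically $2$, and $\ker h$ contains the image of the $2$-torus acting on $\bar Y$, which preserves each $\{q_j=q_k\}$); such a coset can lie entirely inside the removed locus, so surjectivity of $h$ on the full torus does not yield surjectivity of $h|_B$ --- and the theorem moreover requires the base to be an honest product of $\mathbb{G}_m$'s with trivial monodromy. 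The standard repair is to build the family one blowup at a time: any exceptional curve $E\not\subseteq D$ has $E\cdot D=1$ and meets $D$ transversally at a smooth point of a single component, so $Y$ is the blowup of an anticanonical pair $(Y',D')$ at a point of $D_i'{}^\circ\cong\mathbb{G}_m$; moving that center multiplies $\varphi(\alpha)$ by $q^{\pm a}$ with $a=\alpha\cdot[E]$, and divisibility of $\mathbb{G}_m$ shows the homomorphisms extending a given $\varphi_{Y'}$ --- a torsor under $\Hom(\Lambda/\pi^*\Lambda',\mathbb{G}_m)$ --- are all attained. Induction then produces a base that is literally a product of $\mathbb{G}_m$'s with nothing deleted (colliding centers just become infinitely near points, which are permitted), which is essentially what the cited references do.
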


For future reference, we recall some standard facts about negative definite curves on a surface:

\begin{lemma}\label{negdef} Let $X$ be a smooth projective surface and let $G_1, \dots , G_n$ be irreducible curves on $X$ such that the intersection matrix $(G_i\cdot G_j)$ is negative definite. Let $F$ be an effective divisor on $X$, not necessarily reduced or irreducible, and such that, for all $i$,  $G_i$ is not a  component of $F$. 
\begin{enumerate}
\item[\rm{(i)}] Given $r_i \in \Ar$, if $(F + \sum_ir_iG_i) \cdot G_j = 0$ for all $j$, then $r_i \geq  0$ for all $i$, and, for every subset $I$ of $\{1, \dots, n\}$, if $\bigcup_{i\in I}G_i$ is a connected curve such that $F\cdot G_j \neq 0$ for some $j\in I$, then $r_i > 0$ for $i\in I$.
\item[\rm{(ii)}] Given $s_i, t_i \in \Ar$, if $[F] + \sum_is_i[G_i] = \sum_it_i[G_i]$, then $F=0$ and $s_i = t_i$ for all $i$. \qed
\end{enumerate}
\end{lemma}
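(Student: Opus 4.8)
The plan is to rely on two elementary properties of effective divisors on the projective surface $X$: if $G$ is an irreducible curve that is not a component of an effective divisor $F$ then $F \cdot G \geq 0$, and an effective $\Ar$-divisor that pairs to $0$ with an ample class must be the zero divisor. Throughout, I would decompose any real combination $\sum_i c_i G_i$ as $A - B$, where $A = \sum_{c_i > 0} c_i G_i$ and $B = \sum_{c_i < 0} (-c_i) G_i$ are effective $\Ar$-divisors with disjoint support (so $A \cdot B \geq 0$), and use that negative definiteness of $(G_i \cdot G_j)$ forces $(\sum_i c_i G_i)^2 \leq 0$, with equality only if every $c_i = 0$.

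For (i), write $\sum_i r_i G_i = A - B$ as above. Pairing the hypothesis $(F + A - B) \cdot G_j = 0$ against the effective divisor $B$ yields $0 = F \cdot B + A \cdot B - B^2$, so $B^2 = F \cdot B + A \cdot B \geq 0$: indeed $F$ and $B$ share no component, because each $G_i$ in the support of $B$ is not a component of $F$, and $A$, $B$ have disjoint support. Negative definiteness then gives $B = 0$, i.e.\ $r_i \geq 0$ for all $i$. For the strict inequality, suppose $r_{i_0} = 0$ for some $i_0 \in I$. Pairing the hypothesis against $G_{i_0}$ and using $r_{i_0} = 0$ gives $0 = F \cdot G_{i_0} + \sum_{k \neq i_0} r_k (G_k \cdot G_{i_0})$, a sum of nonnegative terms since $r_k \geq 0$ and $G_k \cdot G_{i_0} \geq 0$ for $k \neq i_0$; hence $F \cdot G_{i_0} = 0$ and $r_k = 0$ whenever $G_k \cdot G_{i_0} > 0$. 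In particular the set $\{i \in I : r_i = 0\}$ is nonempty and closed under passing to neighbours in the dual graph of $\bigcup_{i \in I} G_i$; since that graph is connected, $r_i = 0$ for every $i \in I$, and then repeating the computation gives $F \cdot G_j = 0$ for every $j \in I$, contradicting the hypothesis. Thus $r_i > 0$ for all $i \in I$.

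For (ii), I would rewrite the relation as $[F] = \sum_i u_i [G_i]$ with $u_i = t_i - s_i$, and decompose $\sum_i u_i G_i = A - B$ as above, so that $[F] + [B] = [A]$ in $H^2(X; \Ar)$. Pairing with $A$ gives $A^2 = F \cdot A + A \cdot B \geq 0$ (again $F$, $A$ have no common component and $A$, $B$ have disjoint support), hence $A = 0$ by negative definiteness. Then $F + B$ is an effective $\Ar$-divisor whose class is $0$, so it pairs to $0$ with an ample class and therefore $F + B = 0$; thus $F = 0$, $B = 0$, every $u_i = 0$, and $s_i = t_i$.

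I expect the only step requiring real care to be the propagation argument in (i): one must verify that the vanishing of a single coefficient $r_{i_0}$ with $i_0 \in I$ forces both $F \cdot G_{i_0} = 0$ and the vanishing of the coefficients of all adjacent components, and then use connectedness of $\bigcup_{i \in I} G_i$ to conclude that all coefficients indexed by $I$ vanish. Everything else is a direct consequence of effectivity together with the negative definiteness of the intersection matrix.
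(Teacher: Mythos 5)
Your proof is correct. The paper states this lemma as a standard fact and omits the proof entirely (the \qed follows the statement), so there is nothing to compare against; your argument — splitting $\sum_i c_i G_i$ into positive and negative parts $A-B$, using $F\cdot B\geq 0$ and $A\cdot B\geq 0$ to force $B^2\geq 0$ and hence $B=0$ by negative definiteness, the neighbour-propagation via connectedness for the strict inequality, and pairing with an ample class to kill the effective divisor of trivial class in (ii) — is exactly the standard one being invoked.
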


The following general result is also well-known:

\begin{proposition}\label{constructnef} Let $X$ be a smooth projective surface and let $G_1, \dots , G_n$ be irreducible curves on $X$ such that the intersection matrix $(G_i\cdot G_j)$ is negative definite. (We do not, however, assume that $\bigcup_iG_i$ is connected.) 
Then there exists a nef and big divisor $H$ on $X$ such that $H\cdot G_j = 0$ for all $j$ and, if $C$ is an irreducible curve such that $C \neq G_j$ for any $j$, then  $H\cdot C >0$. In fact, the set of nef and big $\Ar$-divisors which are orthogonal to $\{G_1, \dots, G_n\}$ is a nonempty open subset of $\{G_1, \dots, G_n\}^\perp \otimes \Ar$.
\end{proposition}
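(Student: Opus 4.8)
The plan is to construct $H$ explicitly from an ample class, and then to deduce the openness statement by a perturbation argument whose crux is a uniform estimate over all irreducible curves coming from the fact that the ample cone of $X$ is open.

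\emph{Construction of $H$.} Choose a smooth irreducible very ample curve $A$ on $X$ (a general member of a very ample linear system is irreducible by Bertini, and since $A^2>0>G_i^2$ it differs from every $G_i$, so no $G_i$ is a component of $A$). Since $(G_i\cdot G_j)$ is negative definite it is invertible, so there are unique $r_i\in\Q$ with $\bigl(A+\sum_i r_iG_i\bigr)\cdot G_j=0$ for all $j$; set $H=A+\sum_i r_iG_i$, so that $H\cdot G_j=0$ for all $j$. As $A$ is ample it meets every $G_j$, so Lemma~\ref{negdef}(i) applied with $F=A$ gives $r_i>0$ for all $i$ (taking $I$ to run over the connected components of $\bigcup_iG_i$). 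If $C$ is an irreducible curve with $C\neq G_j$ for every $j$, then $G_i\cdot C\geq 0$ and $A\cdot C>0$, so $H\cdot C=A\cdot C+\sum_i r_i(G_i\cdot C)\geq A\cdot C>0$; together with $H\cdot G_j=0$ this shows $H$ is nef. Finally $H^2=H\cdot A=A^2+\sum_i r_i(A\cdot G_i)\geq A^2>0$, and a nef class with positive self-intersection is big (asymptotic Riemann--Roch). This proves the first assertion, with $H$ a $\Q$-divisor.

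\emph{Openness.} Write $V=\{G_1,\dots,G_n\}^\perp\otimes\Ar$, and let $U\subseteq V$ be the set of classes satisfying the conclusion just established, i.e.\ nef, of positive self-intersection, and $>0$ on every irreducible curve $C\neq G_j$ (such a class is automatically big); we have just shown $U\neq\emptyset$. Fix $[H]\in U$. Since $[H]$ is big we may write $[H]\equiv[A']+[E']$ with $[A']$ an ample $\Ar$-class and $[E']$ an effective $\Ar$-divisor (Kodaira's lemma), so that $H\cdot C\geq A'\cdot C$ for every irreducible curve $C$ that is not one of the finitely many components of $E'$. The essential point is a uniform bound: because the ample cone of $X$ is open, for each element $B$ of a fixed basis of $V$ there is $M_B>0$ with both $M_BA'+B$ and $M_BA'-B$ ample, whence $|B\cdot C|\leq M_B\,(A'\cdot C)$ for every irreducible curve $C$; summing, there is a constant $K$ with $|v\cdot C|\leq K\,\|v\|\,(A'\cdot C)$ for all $v\in V$ and all irreducible curves $C$.

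Now take $v\in V$ with $\|v\|$ small. Then $[H]+v\in V$, hence it is orthogonal to every $G_j$. For an irreducible curve $C\neq G_j$ which is not a component of $E'$ we have $(H+v)\cdot C\geq A'\cdot C-K\|v\|(A'\cdot C)=(1-K\|v\|)(A'\cdot C)>0$ once $\|v\|<1/K$. For each of the finitely many components $C$ of $E'$ with $C\neq G_j$ we have $H\cdot C>0$, so $(H+v)\cdot C>0$ for $\|v\|$ below a further positive bound. Thus $[H]+v$ is nef and $>0$ on every irreducible $C\neq G_j$, and $(H+v)^2>0$ by continuity, so $[H]+v$ is big; hence $[H]+v\in U$, and $U$ is open.

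\emph{Main obstacle.} The construction of $H$ and its nefness and bigness are routine given Lemma~\ref{negdef}; the content is the openness statement. Its one delicate feature is that $H'\cdot C$ must be kept positive simultaneously for the infinitely many irreducible curves $C$, and the device that makes this work is the openness of the ample cone of $X$, which provides the comparison $|v\cdot C|\ll\|v\|\,(A'\cdot C)$ uniformly in $C$. (Here openness is for the set $U$ above --- the set of $H$ produced in the first part --- and not literally for the set of all nef and big classes orthogonal to the $G_i$, which may have boundary points, such as a nef but non-ample pull-back under a partial contraction.)
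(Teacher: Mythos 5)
Your proof is correct. The construction of $H$ is the same as the paper's: start from an ample class ($H_0$ there, your very ample $A$), solve for the $r_i$ using nondegeneracy of $(G_i\cdot G_j)$, get $r_i>0$ from Lemma~\ref{negdef}(i), and conclude nefness and bigness exactly as you do. Where you genuinely diverge is the openness statement. The paper's argument is a one-liner: $x\mapsto x+\sum_ir_i(x)G_i$ is precisely the orthogonal projection $p$ of $H^2(X;\Ar)$ onto $V=\{G_1,\dots,G_n\}^\perp\otimes\Ar$, and since $p$ is an open map, the image of the open ample cone is a nonempty open subset of $V$ consisting (by the first part) of nef and big classes orthogonal to the $G_i$. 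Your route instead fixes one class in your set $U$ and proves openness of $U$ at that point by a perturbation argument, with the uniform bound $|v\cdot C|\leq K\|v\|\,(A'\cdot C)$ extracted from the openness of the ample cone together with Kodaira's lemma. This costs more work but buys a little more: it shows that the explicitly described set $U$ (nef, big, and strictly positive on every irreducible $C\neq G_j$) is itself open, rather than merely exhibiting some open set of nef and big classes. Your closing parenthetical is also well taken: read literally, the set of \emph{all} nef and big classes in $V$ need not be open (e.g.\ $X$ the blowup of $\Pee^2$ at two points, $G_1=E_1$, and the line class $h$, which is nef and big and orthogonal to $E_1$ but has $h\cdot E_2=0$, so $h+\epsilon E_2$ is not nef), and the paper's proof, like yours, in fact establishes the existence of a nonempty open subset of $V$ consisting of such classes, which is what is used later.
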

\begin{proof} Fix an ample divisor $H_0$ on $X$. Since $(G_i\cdot G_j)$ is negative definite, there exist $r_i\in \Q$ such that $(\sum_ir_iG_i) \cdot G_j = -(H_0\cdot G_j)$ for every $j$, and hence $(H_0 + \sum_ir_iG_i) \cdot G_j = 0$. By Lemma~\ref{negdef}, $r_i > 0$ for every $i$. There exists an $N > 0$ such that   $Nr_i \in \Zee$ for all $i$. Then $H = N(H_0 + \sum_ir_iG_i)$ is an effective  divisor satisfying $H\cdot G_j = 0$ for all $j$. If $C$ is an irreducible curve such that $C \neq G_j$ for any $j$, then $H_0 \cdot C > 0$ and $G_i \cdot C \geq 0$ for all $i$, hence $H\cdot C >0$. In particular $H$ is nef. Finally $H$ is big since $H^2 = NH\cdot(H_0 + \sum_ir_iG_i) = N(H\cdot H_0) > 0$, as $H_0$ is ample.

To see the final statement, we   apply the above argument to an ample $\Ar$-divisor $x$ (i.e.\ an element in the interior of the ample cone) to see that $x + \sum_ir_iG_i$ is a nef  and big $\Ar$-divisor orthogonal to $\{G_1, \dots, G_n\}$. Since $x + \sum_ir_iG_i$ is simply the orthogonal projection $p$ of $x$ onto $\{G_1, \dots, G_n\}^\perp \otimes \Ar$, and $p\colon H^2(X; \Ar) \to  \{G_1, \dots, G_n\}^\perp \otimes \Ar$ is an open map, the image of the interior of the ample cone of $X$ is then a nonempty open subset of $\{G_1, \dots, G_n\}^\perp \otimes \Ar$ consisting of nef and big $\Ar$-divisors  orthogonal to $\{G_1, \dots, G_n\}$.
\end{proof}

Applying the above construction to $X=Y$ and $D_1, \dots, D_r$, we can find a nef and big divisor $H$ such that $H\cdot D_j = 0$ for all $j$ and such that, if $C$ is an irreducible curve such that $C \neq D_j$ for any $j$, then  $H\cdot C >0$.

\begin{proposition}
 Let $(Y,D)$ be a negative definite anticanonical pair and let $H$ be  a nef and big divisor such that $H\cdot D_j = 0$ for all $j$ and such that, if $C$ is an irreducible curve such that $C \neq D_j$ for any $j$, then  $H\cdot C >0$. Suppose in addition that $\scrO_Y(H)|D = \scrO_D$, i.e.\ that $\varphi_Y([H]) =1$. Then the $D_i$ are not fixed components of $|H|$. Hence, if $\overline{Y}$ denotes the normal complex surface obtained by contracting the $D_i$, then $H$ induces an  ample divisor $\overline{H}$ on $\overline{Y}$ and $|3\overline{H}|$ defines an embedding of $\overline{Y}$ in $\Pee^N$ for some $N$.  
\end{proposition}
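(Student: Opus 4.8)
The plan is to prove the two assertions in turn. For the statement on fixed components: since $-K_Y=D$, the restriction sequence of $D$ reads
\[
0\to\scrO_Y(K_Y+H)\to\scrO_Y(H)\to\scrO_Y(H)|D\to 0 ,
\]
and, as $H$ is nef and big, Kawamata--Viehweg vanishing (Ramanujam's theorem suffices here) gives $H^1(Y,\scrO_Y(K_Y+H))=0$, so $H^0(Y,\scrO_Y(H))\to H^0(D,\scrO_Y(H)|D)$ is surjective. The hypothesis $\varphi_Y([H])=1$ says precisely that $\scrO_Y(H)|D=\scrO_D$, whose constant section $1$ vanishes nowhere; a lift of it is a section of $\scrO_Y(H)$ with zero divisor disjoint from $D$, so the base locus of $|H|$ misses $D$ and no $D_i$ is a fixed component. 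As $\varphi_Y$ is a homomorphism, $\varphi_Y([mH])=\varphi_Y([H])^m=1$ for all $m\ge 1$, so the same argument shows the base locus of $|mH|$ misses $D$ and every section of $\scrO_Y(mH)$ is constant on $D$; hence the rational map $\phi_{|mH|}$ is a morphism near $D$, contracts $D$ to a point, and factors through the contraction $\pi\colon Y\to\overline{Y}$.

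For the embedding statement, set $\overline{H}=\pi_*H$; intersecting $\pi^*\overline{H}=H+\sum a_iD_i$ with the $D_j$ and using negative definiteness forces $a_i=0$, so $\pi^*\overline{H}=H$, and thus $\overline{H}$ is $\Q$-Cartier with $\overline{H}^2=H^2>0$ and $\overline{H}\cdot\overline{C}=H\cdot\widetilde{C}\ge 1$ for the strict transform $\widetilde{C}$ (which is never a $D_i$) of any curve $\overline{C}\subset\overline{Y}$. Moreover $H^2$ is even, since $H^2\equiv H\cdot K_Y=-H\cdot D=0\pmod 2$, so $\overline{H}^2\ge 2$; and $\overline{Y}$ is Gorenstein with $K_{\overline{Y}}\sim 0$, because $\pi^*\scrO_{\overline{Y}}(K_{\overline{Y}})=\scrO_Y(K_Y+D)=\scrO_Y$. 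Hence it is enough to show that $3\overline{H}=K_{\overline{Y}}+3\overline{H}$ is very ample; then $|3\overline{H}|$ embeds $\overline{Y}$ in $\Pee^N$ with $N=h^0(Y,\scrO_Y(3H))-1$, and in particular $\overline{Y}$ is projective.

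Proving that $3\overline{H}$ is very ample is the step I expect to be the main obstacle, and I would obtain it from a Reider-type criterion on the normal surface $\overline{Y}$. The numerical inputs are generous: $(3\overline{H})^2=9H^2\ge 18$, and any curve $\overline{E}$ obstructing base-point-freeness, or the separation of two points or a tangent vector, by $|3\overline{H}|$ would have to satisfy $3\overline{H}\cdot\overline{E}\le 2$, which is impossible as $3\overline{H}\cdot\overline{E}=3(\overline{H}\cdot\overline{E})\ge 3$. The genuine difficulty is that the contraction of an anticanonical cycle has an elliptic Gorenstein (cusp) singularity, which is not rational, so Reider's argument---resting on Bogomolov instability on a \emph{smooth} surface---must be adapted. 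One route is to use a version of Reider valid for normal surfaces, with a correction term accounting for the singularity, and then check separately---via the cohomology of the infinitesimal neighbourhoods $nD\subset Y$, where the relations $\scrO_Y(-D)|D=\omega_Y|D$ and $\scrO_Y(mH)|D=\scrO_D$ come into play---that $|3\overline{H}|$ also separates $p=\pi(D)$ from nearby points and restricts to an embedding in a neighbourhood of $p$. An alternative is to transport the question to $Y$ and run the Bogomolov-instability argument for $\pi^*(3\overline{H})=K_Y+(3H+D)$ after a Zariski decomposition, which is necessary because $3H+D$ need not be nef.
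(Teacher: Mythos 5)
Your argument for the first assertion is correct and is exactly the paper's: the restriction sequence $0\to\scrO_Y(H-D)\to\scrO_Y(H)\to\scrO_D\to 0$, with $H^1(Y,\scrO_Y(K_Y+H))=0$ by Ramanujam/Mumford vanishing, lifts the nowhere-vanishing section of $\scrO_Y(H)|D=\scrO_D$, so no $D_i$ is a fixed component. Your verification that $\pi^*\overline{H}=H$ and that $\overline{H}^2>0$, $\overline{H}\cdot\overline{C}>0$ for every curve is the Nakai--Moishezon check the paper invokes for ampleness of $\overline{H}$; that part is fine.

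The genuine gap is the very ampleness of $3\overline{H}$, and you have in effect conceded it: you correctly identify that Reider's criterion does not apply as stated, both because $3H+D$ fails to be nef (indeed $(3H+D)\cdot D_i=D\cdot D_i<0$ for some $i$, since $D$ is negative definite) and, more seriously, because the contracted point is an elliptic Gorenstein (cusp) singularity, which is not rational, so one cannot reduce separation of jets at $p=\pi(D)$ to a statement about a nef line bundle on $Y$ in the usual way; a separate analysis of the infinitesimal neighbourhoods $nD$ is genuinely needed and you only gesture at it. Neither of your two proposed routes is carried out, so the third assertion is not proved. The paper does not run a Reider-type argument at all: it quotes the results of \cite{Fried1} (Friedman, \emph{Linear systems on anticanonical pairs}), where base-point-freeness and very ampleness statements of exactly this shape are established by elementary restriction-to-$D$ and induction arguments tailored to anticanonical pairs, including the behaviour along the cycle. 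If you want a self-contained proof rather than a citation, the missing content is precisely those lemmas; as written, your proposal replaces one citation with an unproved (and, in the form of Reider on a surface with a non-rational singularity, not obviously true) criterion.
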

\begin{proof} Consider the exact sequence
$$0 \to \scrO_Y(H-D) \to \scrO_Y(H) \to \scrO_D \to 0.$$
Looking at the long exact cohomology sequence, as 
$$H^1(Y; \scrO_Y(H-D)) = H^1(Y;\scrO_Y(H) \otimes K_Y)$$ is Serre dual to $H^1(Y; \scrO_Y(-H)) = 0$, by Mumford vanishing, there exists a section of $\scrO_Y(H)$ which is nowhere vanishing on $D$, proving the first statement. The second follows from Nakai-Moishezon and the third from general results on linear series on anticanonical pairs \cite{Fried1}.
\end{proof}

\begin{remark} By the surjectivity of the period map \ref{surjper}, for any $(Y,D)$ a negative definite anticanonical pair and $H$ a nef and big divisor on $Y$ such that $H\cdot D_j = 0$ for all $j$ and $H\cdot C > 0$ for all curves $C\neq D_i$, there exists a deformation of the pair $(Y,D)$ such that the divisor corresponding to $H$ has  trivial restriction to $D$. More generally, one can consider deformations such that $\varphi_Y([H])$ is a torsion point of $\mathbb{G}_m$. In this case, if $\overline{Y}$ is the normal surface obtained by contracting $D$, then $\overline{Y}$ is projective. Note that this implies that the set of pairs $(Y,D)$ such that  $\overline{Y}$ is projective is Zariski dense in the moduli space. However, as the set of torsion points is not dense in $\mathbb{G}_m$ in the classical topology, the set of projective surfaces $\overline{Y}$ will not be dense in the classical topology.
\end{remark}

\section{Roots and nodal classes}

\begin{definition}
Let $\mathcal{C}=\mathcal{C}(Y)$ be the positive cone of $Y$, i.e.\ 
$$\mathcal{C} = \{x\in H^2(Y; \Ar): x^2 >0\}.$$
 Then $\mathcal{C}$ has two components, and exactly one of them, say $\mathcal{C}^+=\mathcal{C}^+(Y)$, contains the classes of ample divisors. We also define
$$\mathcal{C}^+_D =  \mathcal{C}^+_D(Y) = \{x\in \mathcal{C}^+: x \cdot [D_i] \geq 0 \text{ for all $i$ }\}.$$
Let $\overline{\mathcal{A}}(Y)\subseteq  \mathcal{C}^+ \subseteq H^2(Y; \Ar)$ be the (closure of) the ample (nef, K\"ahler) cone of $Y$ in $\mathcal{C}^+$. By definition, $\overline{\mathcal{A}}(Y)$ is closed in $\mathcal{C}^+$ but not in general in $H^2(Y; \Ar)$.
\end{definition}

\begin{definition} Let $\alpha \in H^2(Y; \Zee), \alpha \neq 0$. The \textsl{ oriented wall $W^\alpha$ associated to $\alpha$} is the set $\{x\in \mathcal{C}^+: x\cdot \alpha =0\}$, i.e.\ the intersection of $\mathcal{C}^+$ with the orthogonal space to $\alpha$, together with the preferred half space defined by $x\cdot \alpha \geq 0$. If $C$ is a curve on $Y$, we write $W^C$ for $W^{[C]}$. A standard result (see for example \cite{FriedmanMorgan}, II (1.8)) shows that, if $I$ is a subset of $H^2(Y; \Zee)$ and there exists an $N\in \Zee^+$ such that $-N \leq \alpha^2 < 0$ for all $\alpha \in I$, then the collection of walls $\{W^\alpha: \alpha \in I\}$ is locally finite on $\mathcal{C}^+$. Finally, we say that $W^\alpha$ is a \textsl{face} of $\overline{\mathcal{A}}(Y)$ if $\partial  \overline{\mathcal{A}}(Y)\cap W^\alpha$ contains an open subset of $W^\alpha$ and $x\cdot \alpha \geq 0$ for all $x\in \overline{\mathcal{A}}(Y)$.
\end{definition}

\begin{lemma} $\overline{\mathcal{A}}(Y)$ is the set of all $x\in \mathcal{C}^+$ such that  $x\cdot [D_i]\geq 0$, $x\cdot [E] \geq 0$ for all exceptional curves $E$  and $x\cdot [C] \geq 0$ for all $-2$-curves $C$. Moreover, if   $\alpha$ is the class associated to an exceptional or $-2$-curve, or $\alpha =[D_i]$ for some $i$ such that $D_i^2< 0$  then $W^\alpha$ is a face of $\overline{\mathcal{A}}(Y)$. If $\alpha, \beta$ are two such classes, $W^\alpha = W^\beta$ $\iff$ $\alpha =\beta$. 
\end{lemma}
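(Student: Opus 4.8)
The plan is to prove the three assertions in turn, using the standard description of the nef cone of a projective surface together with Proposition~\ref{constructnef}. For the characterization of $\overline{\mathcal{A}}(Y)$, one inclusion is immediate: a class in $\overline{\mathcal{A}}(Y)$ is a limit of ample classes and so pairs non-negatively with every effective curve, in particular with each $[D_i]$, each exceptional class, and each $-2$-class. For the reverse inclusion I would use that $\overline{\mathcal{A}}(Y)$ is the set of $x\in\mathcal{C}^+$ which are nef, and that $x$ is nef iff $x\cdot[C']\geq 0$ for every irreducible curve $C'$. So, given such an $x$ pairing non-negatively with all $[D_i]$, all exceptional classes and all $-2$-classes, take an arbitrary irreducible $C'$ and split into cases. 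If $(C')^2\geq 0$ then $C'$ is nef (it meets itself and every other irreducible curve non-negatively), so $[C']\in\overline{\mathcal{C}^+}$ and $x\cdot[C']\geq 0$ by the standard positivity of the intersection form on $\overline{\mathcal{C}^+}$. If $(C')^2<0$ and $C'$ is not one of the $D_i$, then $C'$ is not a component of $D=-K_Y$, so $K_Y\cdot C'=-D\cdot C'\leq 0$; then adjunction $(C')^2+K_Y\cdot C'=2p_a(C')-2\geq -2$ forces $(C')^2\in\{-1,-2\}$, and combined with $K_Y\cdot C'\leq 0$ it forces $p_a(C')=0$, so $C'\cong\Pee^1$ and $C'$ is an exceptional curve or a $-2$-curve by Definition~\ref{defcurves}. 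In every case $x\cdot[C']\geq 0$, so $x\in\overline{\mathcal{A}}(Y)$.

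Next, to see that $W^\alpha$ is a face for $\alpha$ of the stated form, let $C$ be the irreducible curve with $[C]=\alpha$; in each allowed case $C^2<0$, so $\{C\}$ is a negative definite configuration and Proposition~\ref{constructnef} applies to it. It produces a nonempty open subset $U$ of the hyperplane $\alpha^\perp\otimes\Ar$ consisting of nef and big $\Ar$-divisor classes orthogonal to $\alpha$. Any $h\in U$ has $h^2>0$ and is nef, hence lies in $\mathcal{C}^+$ and therefore in $\overline{\mathcal{A}}(Y)$; since $h\cdot\alpha=0$ it is not ample, so $h\in\partial\overline{\mathcal{A}}(Y)$; and $h\in\mathcal{C}^+\cap\alpha^\perp=W^\alpha$. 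As $\mathcal{C}^+$ is open, $W^\alpha$ is open in $\alpha^\perp\otimes\Ar$, so $U$ is an open subset of $W^\alpha$ contained in $\partial\overline{\mathcal{A}}(Y)\cap W^\alpha$; together with the inequality $x\cdot\alpha\geq 0$ on $\overline{\mathcal{A}}(Y)$ already established, this is exactly the statement that $W^\alpha$ is a face.

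Finally, for injectivity, suppose $W^\alpha=W^\beta$ for two classes $\alpha,\beta$ of the stated form. Since $\alpha^2<0$, the form on $\alpha^\perp\otimes\Ar$ is Lorentzian, so $W^\alpha=\mathcal{C}^+\cap\alpha^\perp$ is a nonempty open subset of that hyperplane and hence spans it; likewise for $\beta$. Therefore $\alpha^\perp\otimes\Ar=\beta^\perp\otimes\Ar$, so $\alpha=c\beta$ for some $c\in\Ar^\times$; pairing with an ample class and using that $\alpha$ and $\beta$ are classes of effective curves gives $c>0$, hence $\alpha\cdot\beta=c\beta^2<0$. But $\alpha$ and $\beta$ are classes of irreducible curves, and two distinct irreducible curves meet non-negatively, so these curves coincide and $\alpha=\beta$. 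The converse is trivial.

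The genuinely delicate point is the face assertion: exhibiting a single class of $\overline{\mathcal{A}}(Y)$ lying on $W^\alpha$ is easy, but being a face requires a whole open piece of $W^\alpha$ inside $\partial\overline{\mathcal{A}}(Y)$, and this is precisely what the openness clause of Proposition~\ref{constructnef} provides. The remaining ingredients — the description of the nef cone, the adjunction bookkeeping classifying the negative curves, and Lorentzian positivity — are routine.
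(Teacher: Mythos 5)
Your proof is correct and follows essentially the same route as the paper: reduce the description of $\overline{\mathcal{A}}(Y)$ to classifying irreducible curves of negative self-intersection via adjunction, obtain the face statement from the openness clause of Proposition~\ref{constructnef}, and deduce $W^\alpha=W^\beta\Rightarrow\alpha=\beta$ from non-proportionality of the classes. The only (harmless) cosmetic difference is in the last step, where you conclude via ``two distinct irreducible curves meet non-negatively'' rather than directly observing that no two distinct classes of the listed types are positive multiples of one another.
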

\begin{proof} For the first claim, it is enough to show that, if $G$ is an irreducible curve on $Y$ with $G^2 < 0$, then $G$ is either $D_i$ for some $i$, an exceptional curve or a $-2$-curve. This follows immediately from adjunction since, if $G\neq D_i$ for any $i$, then $G\cdot D \geq 0$ and $-2 \leq 2p_a(G) -2 = G^2 - G \cdot D < 0$, hence $p_a(G) =0$ and either $G^2 = -2$, $G \cdot D =0$, or $G^2 = G \cdot D =-1$. The last two statements follow from the openness statement in Proposition~\ref{constructnef} and the fact that no two distinct classes of the types listed above are multiples of each other. 
\end{proof}

As an alternate characterization of the classes in the previous lemma, we have:

\begin{lemma}\label{numeric} Let $H$ be a  nef   divisor  such that $H\cdot D >0$.
\begin{enumerate}
\item[\rm(i)] If    $\alpha \in H^2(Y; \Zee)$ with $\alpha ^2 = \alpha \cdot [K_Y] = -1$, then  $\alpha \cdot [H] \geq 0$ $\iff$ $\alpha$ is the class of an effective curve. In particular, the wall $W^\alpha$ does not pass through the interior of $\overline{\mathcal{A}}(Y)$  (cf.\ \cite{FriedmanMorgan}, p.\ 332 for a more general statement).
\item[\rm(ii)] If    $\beta \in H^2(Y; \Zee)$ with $\beta ^2= -2$, $\beta\cdot [D_i] = 0$ for all $i$, $\beta \cdot [H] \geq  0$, and $\varphi_Y(\beta) =1$, then $\pm \beta$ is the class of an effective curve, and $\beta$ is effective if $\beta \cdot [H] > 0$.
\end{enumerate}
Hence the ample cone $\overline{\mathcal{A}}(Y)$ is the set of all $x\in \mathcal{C}^+$ such that  $x\cdot [D_i]\geq 0$ and  $x\cdot\alpha \geq 0$ for all classes $\alpha$ and $\beta$ as described in {\rm(i)} and {\rm(ii)} above, where in case {\rm(ii)} we assume in addition that $\beta$ is effective, or equivalently that $\beta \cdot [H] > 0$ for some nef divisor $H$.
\end{lemma}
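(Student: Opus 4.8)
The plan is to prove parts (i) and (ii) separately and then assemble the final characterization of $\overline{\mathcal{A}}(Y)$ from the previous lemma together with Riemann--Roch.

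First, for part (i): given $\alpha$ with $\alpha^2 = \alpha\cdot[K_Y] = -1$, Riemann--Roch gives $\chi(L_\alpha) = \frac12(\alpha^2 - \alpha\cdot[K_Y]) + 1 = 1$, so either $L_\alpha$ or $L_{K_Y-\alpha} = L_\alpha\spcheck\otimes L_{K_Y}$ is effective (using $h^2(L_\alpha) = h^0(L_{K_Y-\alpha})$). I would rule out the second case when $\alpha\cdot[H]\geq 0$: if $K_Y - \alpha = -D-\alpha$ were effective, then since $H$ is nef, $H\cdot(-D-\alpha) \geq 0$, i.e.\ $-H\cdot D - H\cdot\alpha \geq 0$, forcing $H\cdot D \leq -H\cdot\alpha \leq 0$, contradicting $H\cdot D > 0$. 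Hence $\alpha$ is effective. The converse is immediate: if $\alpha$ is effective then $H$ nef gives $\alpha\cdot[H] \geq 0$. The statement that $W^\alpha$ does not pass through the interior of $\overline{\mathcal{A}}(Y)$ follows because an interior point $x$ is ample, hence $x\cdot\alpha > 0$ for the effective class $\alpha$ (as $\alpha\neq 0$), so $x\notin W^\alpha$; I should note that if $\alpha\cdot[H] = 0$ I can still conclude effectivity from the above, and if $\alpha\cdot[H] < 0$ then $-\alpha$ satisfies $(-\alpha)^2 = -1$, $(-\alpha)\cdot[K_Y] = 1 \neq -1$, so that symmetric case needs the $K_Y - \alpha$ branch — actually the cleanest route is: $\chi(L_\alpha) = 1$ so $h^0(L_\alpha) + h^0(L_{K_Y - \alpha}) \geq 1$, and the nef divisor $H$ with $H\cdot D > 0$ cannot pair non-negatively with both $\alpha$ and $K_Y - \alpha$ unless... in fact $H\cdot\alpha + H\cdot(K_Y-\alpha) = H\cdot K_Y = -H\cdot D < 0$, so at most one of the two classes can be effective, and it is the one (if any) pairing non-negatively with $H$; combined with $\chi = 1$ this pins it down.

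Next, for part (ii): given $\beta$ with $\beta^2 = -2$, $\beta\cdot[D_i] = 0$ for all $i$, $\beta\cdot[H]\geq 0$, and $\varphi_Y(\beta) = 1$. Here $\beta\cdot[K_Y] = -\beta\cdot[D] = 0$, so Riemann--Roch gives $\chi(L_\beta) = \frac12(-2 - 0) + 1 = 0$. This alone does not force $L_\beta$ or $L_{-\beta}$ effective; I need the hypothesis $\varphi_Y(\beta) = 1$. The point is that $\varphi_Y(\beta) = 1$ means $L_\beta|_D \cong \scrO_D$, so from the exact sequence $0 \to L_\beta(-D) \to L_\beta \to \scrO_D \to 0$ and the vanishing $H^1(L_\beta(-D)) = H^1(L_\beta\otimes K_Y)$, which is Serre-dual to $H^1(L_{-\beta})$, I get a relation between $h^0(L_\beta)$, $h^0(L_{-\beta})$ and $h^0(\scrO_D) = 1$. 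More precisely, I would argue: from $\chi(L_\beta) = 0$ and $h^2(L_\beta) = h^0(L_{-\beta})$, we have $h^0(L_\beta) + h^1(L_\beta) = \chi = 0$... wait, $h^0 - h^1 + h^2 = 0$ so $h^0(L_\beta) + h^0(L_{-\beta}) = h^1(L_\beta)$. I then want to show $h^1(L_\beta) > 0$: feeding $\scrO_D \hookrightarrow$ the picture — actually twist the other way, or use that $H^0(\scrO_D) = \Cee$ lifts (or obstructs) into $H^1(L_\beta(-D))$. Since $H^1(L_\beta(-D)) = 0$ is not quite what I have — rather $H^1$ of that vanishes by Mumford vanishing only when $-H$ pairs appropriately. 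Let me instead use: $H^1(Y;L_\beta(-D))$ is Serre-dual to $H^1(Y; L_{-\beta})$; and by symmetry (replacing $\beta$ by $-\beta$, noting $\varphi_Y(-\beta) = 1$ too) I can assume WLOG $\beta\cdot[H]\geq 0$ and try to show $\beta$ effective when $\beta\cdot[H] > 0$. The key claim is that $\beta$ or $-\beta$ is effective: suppose neither is, so $h^0(L_\beta) = h^0(L_{-\beta}) = 0$, hence $h^1(L_\beta) = 0$ too from $\chi = 0$; but then the long exact sequence of $0 \to L_\beta(-D) \to L_\beta \to \scrO_D \to 0$ gives $H^0(\scrO_D) \hookrightarrow H^1(L_\beta(-D))$, and this $H^1$ is Serre-dual to $H^1(L_{-\beta})$ which (again by $\chi(L_{-\beta}) = 0$ and $h^0(L_{-\beta}) = h^0(L_\beta) = 0$) also vanishes — contradiction with $H^0(\scrO_D) = \Cee \neq 0$. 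So one of $\pm\beta$ is effective. Finally, if $\beta\cdot[H] > 0$, then $-\beta$ cannot be effective ($H$ nef would give $-\beta\cdot[H]\geq 0$), so $\beta$ itself is effective; and if $\beta\cdot[H] = 0$ we still get $\pm\beta$ effective.

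For the concluding characterization: by the previous lemma, $\overline{\mathcal{A}}(Y)$ is cut out by $x\cdot[D_i]\geq 0$, $x\cdot[E]\geq 0$ for exceptional curves $E$, and $x\cdot[C]\geq 0$ for $-2$-curves $C$. Every exceptional curve $E$ has $[E]$ satisfying the hypotheses of (i) (since $E^2 = E\cdot K_Y = -1$), and is effective; conversely any effective class $\alpha$ as in (i) is a sum of an exceptional curve (or components of $D$) plus other effective curves — more simply, the inequality $x\cdot\alpha\geq 0$ for all \emph{effective} $\alpha$ as in (i) is implied by the exceptional-curve and $D_i$ inequalities together with $x\in\mathcal{C}^+$ and $x\cdot[D_i]\geq 0$, because decomposing such an effective $\alpha$ into irreducible components each component is a $D_i$, an exceptional curve, a $-2$-curve, or has non-negative square (handled by $x\in\overline{\mathcal{C}^+}$). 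Likewise $-2$-curves give classes as in (ii) with $\beta\cdot[H] > 0$, and conversely. So the two inequality systems define the same cone. I would phrase this last part as a short paragraph invoking Lemma~\ref{negdef}-style decomposition of effective divisors into negative-definite and non-negative parts.

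The main obstacle I anticipate is part (ii), specifically the careful bookkeeping with the hypothesis $\varphi_Y(\beta) = 1$: without it the class $\beta$ of square $-2$ orthogonal to $D$ need not be $\pm$ effective, so the argument must genuinely use $L_\beta|_D\cong\scrO_D$ via the exact sequence $0\to L_\beta(-D)\to L_\beta\to\scrO_D\to 0$ and Serre duality, and one has to make sure the vanishing/nonvanishing statements are consistently organized (the subtle point being that $H^1(L_\beta(-D))$ and $H^1(L_{-\beta})$ are Serre-dual, so assuming both $\pm\beta$ non-effective forces these $H^1$'s to vanish, contradicting the injection of $H^0(\scrO_D)$). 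A secondary nuisance is that in part (i) one must handle the case $\alpha\cdot[H] = 0$ and the sign of $\alpha$ uniformly; the identity $H\cdot\alpha + H\cdot(K_Y - \alpha) = -H\cdot D < 0$ is the clean tool that makes this work. Everything else is routine Riemann--Roch and the structure theory of irreducible negative curves on $Y$ already established in the preceding lemma.
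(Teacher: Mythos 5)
Your proposal is correct and takes essentially the same route as the paper: part (i) is the identical Riemann--Roch argument hinging on $[H]\cdot\alpha+[H]\cdot([K_Y]-\alpha)=-H\cdot D<0$, and part (ii) uses the same restriction sequence to $D$, the hypothesis $\varphi_Y(\beta)=1$, and Serre duality, merely organized as a contradiction from ``both $\pm\beta$ non-effective'' instead of the paper's direct surjection $H^0(L_{-\beta})\twoheadrightarrow H^0(\scrO_D)$. The one slip is the line ``$h^2(L_\beta)=h^0(L_{-\beta})$'', which should be $h^2(L_\beta)=h^0(L_{-\beta}\otimes K_Y)\le h^0(L_{-\beta})$; since your final argument only needs this vanishing to follow from $h^0(L_{-\beta})=0$, nothing breaks.
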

\begin{proof}  (i) Clearly, if $\alpha$ is the class of an effective curve, then $\alpha \cdot [H] \geq 0$ since $H$ is nef. Conversely, assume that $\alpha ^2 = \alpha \cdot [K_Y] = -1$ and that $\alpha \cdot [H] \geq 0$.  By Riemann-Roch, $\chi(L_\alpha) = 1$. Hence either $h^0(L_\alpha) > 0$ or $h^2(L_\alpha) > 0$. But $h^2(L_\alpha) =  h^0(L_\alpha^{-1}\otimes K_Y)$ and $[H] \cdot (-\alpha - [D] ) < 0$, by assumption. Thus $h^0(L_\alpha) > 0$ and hence $\alpha$ is the class of an effective curve.

\smallskip
\noindent (ii) As in (i), $H \cdot (-\beta - [D] ) < 0$, and hence $h^0(L_\beta^{-1}\otimes K_Y) = 0$. Thus $h^2(L_\beta) =0$. Suppose that $h^0(L_\beta) = 0$. Then, by Riemann-Roch, $\chi(L_\beta) = 0$ and hence $h^1(L_\beta) = 0$. Hence $h^1(L_\beta^{-1}\otimes K_Y) = 0$. Since $\varphi_Y(\beta) =1$, $L_\beta^{\pm 1}|D = \scrO_D$.  Thus there is an exact sequence
$$0 \to L_\beta^{-1}\otimes \scrO_Y(-D) \to L_\beta^{-1}\to\scrO_D \to 0.$$
Since $H^1(L_\beta^{-1}\otimes K_Y) =H^1(L_\beta^{-1}\otimes \scrO_Y(-D)) =0$, the map $H^0(L_\beta^{-1})\to H^0(\scrO_D)$ is surjective and hence $-\beta$ is the class of an effective curve.
\end{proof}

It is natural to make the following definition:

\begin{definition} Let $\alpha \in H^2(Y; \Zee)$. Then $\alpha$ is a \textsl{numerical exceptional curve} if $\alpha ^2 = \alpha \cdot [K_Y] = -1$. The numerical exceptional curve $\alpha$ is \textsl{effective} if $h^0(L_\alpha) > 0$, i.e.\ if $\alpha = [G]$, where $G$ is an effective curve.
\end{definition}

A minor variation of the   proof of Lemma~\ref{numeric} shows:

\begin{lemma}\label{remarkafternumeric}  Let $H$ be a nef and big divisor such that $H\cdot G > 0$ for all $G$ an irreducible curve not equal to $D_i$ for some $i$, and let $\alpha$ be a numerical exceptional curve.
\begin{enumerate}
\item[\rm{(i)}] Suppose that  $[H]\cdot\alpha  \geq 0$. Then either $[H]\cdot\alpha  > 0$ and $\alpha$ is effective or $H\cdot D =  [H]\cdot\alpha   =0$ and $\alpha$ is an integral linear combination of the $[D_i]$. 
\item[\rm{(ii)}] If $(Y,D)$ is negative definite and $\alpha$ is an integral linear combination of the $[D_i]$, then either some component $D_i$ is a smooth rational curve of self-intersection $-1$ or $K_Y^2=-1$, $\alpha = K_Y$ and hence $\alpha$ is not effective. 
\item[\rm{(iii)}] If no component $D_i$ is a smooth rational curve of self-intersection $-1$, then $\alpha$ is  effective  $\iff$ $[H] \cdot \alpha > 0$.
\end{enumerate}
\end{lemma}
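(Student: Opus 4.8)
The plan is to imitate the Riemann--Roch and Serre duality argument of Lemma~\ref{numeric}, this time keeping track of the components $D_i$. For a numerical exceptional class $\alpha$ one has $\chi(L_\alpha) = 1$ by Riemann--Roch, while $h^2(L_\alpha) = h^0(\scrO_Y(K_Y - \alpha)) = h^0(\scrO_Y(-D - \alpha))$ since $K_Y \sim -D$; hence $h^0(L_\alpha) + h^0(\scrO_Y(-D-\alpha)) \geq 1$, so at least one of $L_\alpha$ and $\scrO_Y(-D-\alpha)$ has a nonzero section.

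For (i) I would split on the sign of $[H]\cdot\alpha$. If $[H]\cdot\alpha > 0$, then $[H]\cdot(-D-\alpha) = -[H]\cdot[D] - [H]\cdot\alpha < 0$ because $H$ is nef, so $\scrO_Y(-D-\alpha)$ has no section and therefore $L_\alpha$ does, giving the first alternative. If $[H]\cdot\alpha = 0$ and $\scrO_Y(-D-\alpha)$ has a section, say $-D - \alpha = [F]$ with $F$ effective, then nefness forces $0 \le [H]\cdot[F] = -[H]\cdot[D] \le 0$, so $H\cdot D = 0$ and $[H]\cdot[F] = 0$; since $H$ is big with $H\cdot C > 0$ for every irreducible $C \neq D_i$, every component of $F$ is some $D_j$, so $\alpha = -D - F$ is an integral combination of the $[D_i]$, giving the second alternative. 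Finally, if $[H]\cdot\alpha = 0$ and $L_\alpha$ has a section, then $\alpha = [G]$ with $G$ effective and $[H]\cdot[G] = 0$, so $G$ is again supported on the $D_i$ and $\alpha$ is an integral combination of the $[D_i]$.

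For (ii) I would work inside the negative definite lattice spanned by the $[D_i]$. If $r = 1$, then $D_1$ is nodal, $\alpha^2 = n_1^2 D_1^2 = -1$ forces $D_1^2 = -1$ and $n_1 = \pm 1$, and $\alpha\cdot K_Y = -1$ then gives $\alpha = K_Y$ with $K_Y^2 = -1$. If $r \ge 2$ and no $D_i^2 = -1$, then every $D_i^2 \le -2$; let $M = (D_i\cdot D_j)$, and let $M_0$ be obtained from $M$ by replacing each diagonal entry $D_i^2$ by $-2$, so that $-M_0$ is the negated intersection matrix of a cycle of $(-2)$-curves, hence positive semidefinite with kernel spanned by $\mathbf{1} = (1,\dots,1)$, and $-M + M_0 = \operatorname{diag}(-D_i^2 - 2)$ is a nonnegative integral diagonal matrix. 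Writing $n = (n_i)$, the identity $1 = -\alpha^2 = n^\top(-M_0)n + \sum_i(-D_i^2-2)n_i^2$ exhibits $1$ as a sum of two nonnegative integers. If $\sum_i(-D_i^2-2)n_i^2 = 0$, then $n$ is supported on $S = \{i : D_i^2 = -2\}$, which must be a proper subset of $\{1,\dots,r\}$ (otherwise $M = M_0$ is only negative semidefinite) and whose induced dual graph is a disjoint union of chains of type $A$; on $S$ the form $-M_0$ restricts to a direct sum of Cartan matrices of type $A$, which are positive definite with minimal nonzero value $2$, contradicting $n^\top(-M_0)n = 1$. Hence $n^\top(-M_0)n = 0$, so $n = c\mathbf{1}$ with $c \in \Zee$, and $c^2\sum_i(-D_i^2-2) = 1$ forces $c = \pm 1$ and $\sum_i(-D_i^2-2) = 1$; thus the cycle is $(-3,-2,\dots,-2)$, $K_Y^2 = D^2 = -1$, $\alpha = \pm D$, and the condition $\alpha\cdot K_Y = -1$ selects $\alpha = -D = K_Y$, which is not effective since $-D$ is nonzero and anti-effective.

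Part (iii) should then follow by combining (i) and (ii): if $[H]\cdot\alpha > 0$ then $\alpha$ is effective by (i), and conversely if $\alpha = [G]$ is effective then $[H]\cdot\alpha = [H]\cdot[G] \ge 0$, with equality impossible because it would make $\alpha$ an integral combination of the $[D_i]$ by (i), hence $\alpha = K_Y$ by (ii), contradicting that $K_Y \sim -D$ is not effective. The step I expect to be the real obstacle is the branch of (i) where $L_\alpha$ itself is effective and $[H]\cdot\alpha = 0$: there $\alpha$ could a priori be the class of a component $D_i$ with $D_i^2 = -1$ and $H\cdot D_i = 0$ but $H\cdot D \neq 0$, so the assertion $H\cdot D = 0$ in that alternative is genuinely tied to the hypothesis --- appearing in (ii) and (iii) --- that no $D_i$ is a $(-1)$-curve; granting that hypothesis, (ii) shows this branch cannot occur and the clean dichotomy of (iii) emerges. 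The other place that needs (elementary) care is the lattice bookkeeping in (ii): isolating the comparison form $M_0$ and invoking the positive-definiteness of the Cartan matrices of type $A$.
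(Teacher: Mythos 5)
Your proof is correct and follows the paper's strategy: part (i) via the same Riemann--Roch/Serre-duality dichotomy between $\alpha$ and $-\alpha-[D]$, part (ii) by arithmetic in the negative definite lattice spanned by the $[D_i]$, and part (iii) by combining the two. The one genuine difference is in (ii): the paper splits into the case $K_Y^2=-1$, where $\Zee\cdot[K_Y]$ splits off as a unimodular summand of $\bigoplus_i\Zee\cdot[D_i]$ with even negative definite complement, forcing $\alpha=[K_Y]$, and the case $K_Y^2<-1$, handled by a direct estimate; you instead compare $(D_i\cdot D_j)$ with the affine $\widetilde A_{r-1}$ form and invoke positive-definiteness of type-$A$ Cartan matrices. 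Both work, and your version has the mild advantage of uniformly treating classes supported on proper subsets of the components. Your closing remark about the branch of (i) in which $\alpha$ itself is effective with $[H]\cdot\alpha=0$ is well taken: the paper's proof of (i) only addresses the branch where $-\alpha-[D]$ is effective, and disposes of the remaining branch only inside its proof of (iii), by exactly the chain/negative-definiteness argument you describe, so your patch coincides with what the paper actually does.
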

\begin{proof} (i) As in the proof of Lemma~\ref{numeric}, either $\alpha$ or $-\alpha -[D]$ is the class of an effective divisor. If $-\alpha -[D]$ is the class of an effective divisor, then $0\leq [H]\cdot (-\alpha -[D]) \leq 0$, so that $[H]\cdot \alpha=H \cdot D = 0$. In particular $(Y,D)$ is negative definite. Moreover, if $G$ is an effective divisor with $[G] = -\alpha -[D]$, then every component of $G$ is equal to some $D_i$, hence $[G]$ and therefore $\alpha = -[G]-[D]$ are integral linear combinations of the $[D_i]$.

\medskip
\noindent (ii)   Suppose that $\alpha$ is an integral linear combination of the $[D_i]$ but that no $D_i$ is a smooth rational curve of self-intersection $-1$. We shall show that  $K_Y^2=-1$ and $\alpha = K_Y$.  First suppose that $K_Y^2=-1$. Then $\bigoplus_i\Zee\cdot [D_i] = \Zee\cdot [K_Y] \oplus L$, where $L$, the orthogonal complement of $[K_Y]$ in $\bigoplus_i\Zee\cdot [D_i]$, is even and negative definite. Thus $\alpha = a[K_Y] + \beta$, with either $\beta = 0$ or $\beta^2 \leq -2$, and $\alpha ^2 = -a^2 + \beta^2$. Hence, if $\alpha ^2 = \alpha \cdot [K_Y] = -1$, the only possibility is $\beta = 0$ and $a=1$. In case $K_Y^2 < -1$, $D$ is reducible, and no $D_i$ is a smooth rational curve of self-intersection $-1$, then  $D_i^2\leq -2$ for all $i$ and either $D_i^2 \leq -4$ for some $i$ or there exist $i\neq j$ such that $D_i^2 = D_j^2=-3$. In this case, it is easy to check that, for all integers $a_i$ such that $a_i \neq 0$ for some $i$, $(\sum_ia_iD_i)^2 < -1$. This contradicts $\alpha^2 =-1$.

\medskip
\noindent (iii) If $[H]\cdot \alpha > 0$, then $\alpha$ is effective by (i). If $[H]\cdot \alpha< 0$, then clearly $\alpha$ is not effective. Suppose that $[H]\cdot \alpha =0$; we must show that, again, $\alpha$ is not effective. Suppose that $\alpha=[G]$ is effective. By the hypothesis on $H$, every component of $G$ is a $D_i$ for some $i$, so that $\alpha = \sum_ia_i[D_i]$ for some $a_i \in \Zee$, $a_i \geq 0$.  Let $I\subseteq \{1, \dots, r\}$ be the set of $i$ such that $a_i > 0$. Then $H\cdot D_i = 0$ for all $i\in I$. If $I = \{1, \dots, r\}$, then $(Y,D)$ is negative definite and we are done by (ii). Otherwise, $\bigcup_{i\in I}D_i$ is a union of chains of curves whose components $D_i$ satisfy $D_i ^2 \leq -2$. It is then easy to check that $\alpha^2 < -1$ in this case, a contradiction. Hence  $\alpha$ is not effective.
\end{proof}

\begin{definition} Let $Y_t$ be a generic small deformation of $Y$, and identify $H^2( Y_t ;\Ar)$ with $H^2( Y ;\Ar)$. Define $\overline{\mathcal{A}}_{\text{\rm{gen}}}= \overline{\mathcal{A}}_{\text{\rm{gen}}}(Y)$ to be the ample cone $\overline{\mathcal{A}}(Y_t)$ of $Y_t$, viewed as a subset of $H^2( Y ;\Ar)$.
\end{definition}

\begin{lemma}\label{describeA}  With notation as above,
\begin{enumerate} 
\item[\rm(i)] If there do not exist any $-2$-curves on $Y$, then $\overline{\mathcal{A}}(Y) = \overline{\mathcal{A}}_{\text{\rm{gen}}}$. More generally, 
$\overline{\mathcal{A}}_{\text{\rm{gen}}}$ is the set of all $x\in \mathcal{C}^+$ such that $x\cdot [D_i] \geq 0$ and 
$x\cdot \alpha \geq 0$   for all   effective numerical exceptional curves. In particular,
$$\overline{\mathcal{A}}(Y) \subseteq \overline{\mathcal{A}}_{\text{\rm{gen}}}.$$
\item[\rm(ii)] $\overline{\mathcal{A}}(Y) = \{ x\in \overline{\mathcal{A}}_{\text{\rm{gen}}}: x\cdot [C] \geq 0 \text{ for all $-2$-curves $C$} \}$.
\end{enumerate}
\end{lemma}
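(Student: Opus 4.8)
The plan is to deduce both parts from the first lemma of this section (which describes $\overline{\mathcal{A}}(Y)$ as the set of $x\in\mathcal{C}^+$ with $x\cdot[D_i]\ge 0$, $x\cdot[E]\ge 0$ for every exceptional curve $E$, and $x\cdot[C]\ge 0$ for every $-2$-curve $C$), applied both to $Y$ and to a generic small deformation $Y_t$. Under the identification of cohomology, an ample class on $Y$ stays ample on $Y_t$ for $t$ small, so the intersection form, the classes $[D_i]$, and the cone $\mathcal{C}^+$ are the same for $Y$ and $Y_t$; the only thing that can change is the set of relevant negative curves, and the two inputs I need are that $Y_t$ has no $-2$-curves and that $Y_t$ has the same effective numerical exceptional curve classes as $Y$.

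For the first input, if $C$ is a $-2$-curve on $Y_t$ then by adjunction $C\cdot D=0$, so $C$ is disjoint from $D$, $[C]\in\Lambda$, $[C]^2=-2$, and $\varphi_{Y_t}([C])=1$. Hence it is enough to arrange $\varphi_{Y_t}(\beta)\neq 1$ for every $\beta\in\Lambda$ with $\beta^2=-2$. There are only countably many such $\beta$ (each primitive), so each locus $\{\varphi\in\Hom(\Lambda,\mathbb{G}_m):\varphi(\beta)=1\}$ is a proper algebraic subgroup, and their union is a proper subset of $\Hom(\Lambda,\mathbb{G}_m)$. Since a generic small deformation of $(Y,D)$ realizes a generic $\varphi_{Y_t}$ --- the infinitesimal counterpart of the surjectivity in Theorem~\ref{surjper} --- we may take $Y_t$ with $\varphi_{Y_t}$ outside this union, so $Y_t$ has no $-2$-curve. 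For the second input, fix an ample divisor $H$ on $Y$, still ample (hence nef with $H\cdot D>0$) on $Y_t$; the condition of being a numerical exceptional curve is purely numerical, hence unchanged under the deformation, and by Lemma~\ref{numeric}(i) applied on $Y$ and on $Y_t$ with this $H$, a numerical exceptional class $\alpha$ is effective on $Y$ $\iff$ $\alpha\cdot[H]\ge 0$ $\iff$ $\alpha$ is effective on $Y_t$.

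Granting these, part (i) follows. Applying the first lemma to $Y_t$ (which has no $-2$-curves) gives
$$\overline{\mathcal{A}}_{\text{\rm{gen}}}=\overline{\mathcal{A}}(Y_t)=\{x\in\mathcal{C}^+:x\cdot[D_i]\ge 0,\ x\cdot[E]\ge 0\text{ for all exceptional curves }E\text{ on }Y_t\}.$$
Each $[E]$ above is an effective numerical exceptional class, so $\overline{\mathcal{A}}_{\text{\rm{gen}}}\supseteq\{x\in\mathcal{C}^+:x\cdot[D_i]\ge 0,\ x\cdot\alpha\ge 0\text{ for all effective numerical exceptional }\alpha\text{ on }Y\}=:B$; conversely, any effective numerical exceptional $\alpha$ on $Y$ is effective on $Y_t$, so $x\cdot\alpha\ge 0$ for $x\in\overline{\mathcal{A}}(Y_t)$ on writing $\alpha$ as a sum of irreducible curves. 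This gives the claimed description $\overline{\mathcal{A}}_{\text{\rm{gen}}}=B$; the inclusion $\overline{\mathcal{A}}(Y)\subseteq\overline{\mathcal{A}}_{\text{\rm{gen}}}$ then follows because every effective numerical exceptional class on $Y$ pairs nonnegatively with every nef class, and when $Y$ has no $-2$-curves the first lemma identifies $\overline{\mathcal{A}}(Y)$ with $B$. Part (ii) is then formal: $\subseteq$ holds since elements of $\overline{\mathcal{A}}(Y)$ are nef, and for $\supseteq$, if $x\in\overline{\mathcal{A}}_{\text{\rm{gen}}}$ has $x\cdot[C]\ge 0$ for all $-2$-curves $C$ on $Y$, then $x\in\mathcal{C}^+$, $x\cdot[D_i]\ge 0$, and $x\cdot[E]\ge 0$ for all exceptional curves $E$ on $Y$ (since each $[E]$ is effective numerical exceptional and $x\in\overline{\mathcal{A}}_{\text{\rm{gen}}}=B$), so $x\in\overline{\mathcal{A}}(Y)$ by the first lemma.

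The main obstacle I anticipate is making precise, in the first input, that a generic small deformation has generic period --- i.e.\ that the local period map is a submersion onto $\Hom(\Lambda,\mathbb{G}_m)$ --- so that the countably many proper conditions $\varphi(\beta)=1$ can all be avoided simultaneously; once that is granted, everything else is bookkeeping with the inequalities defining the cones $\overline{\mathcal{A}}(Y)$, $\overline{\mathcal{A}}(Y_t)$, and $\overline{\mathcal{A}}_{\text{\rm{gen}}}$.
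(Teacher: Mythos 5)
Your proposal is correct and follows essentially the same route as the paper: reduce to a generic deformation $Y_t$ with no $-2$-curves, use the preceding lemma to describe $\overline{\mathcal{A}}(Y_t)$ by walls of exceptional curves, and use Lemma~\ref{numeric}(i) with an ample $H$ deforming along the family to identify the effective numerical exceptional classes on $Y$ and $Y_t$. The paper compresses your ``first input'' into the remark that surfaces without $-2$-curves are generic by the surjectivity of the period map (Theorem~\ref{surjper}); your countable-union-of-proper-subgroups argument via $\varphi_{Y_t}(\beta)=1$ is exactly the intended justification.
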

\begin{proof} Let $Y$ be  a surface with   no $-2$-curves (such surfaces exist and are generic by the surjectivity of the period map, Theorem~\ref{surjper}). Fix a nef   divisor $H$ on $Y$ with $H\cdot D >0$. Then $\overline{\mathcal{A}}(Y)$ is the set of all $x\in \mathcal{C}^+$ such that $x\cdot [D_i] \geq 0$ and $x\cdot [E] \geq 0$ for all exceptional curves $E$, and this last condition is equivalent to $x\cdot \alpha \geq 0$   for all $\alpha\in H^2(Y; \Zee)$ such that $\alpha^2 =\alpha\cdot [K_Y] =-1$ and $\alpha \cdot [H] \geq 0$, by Lemma~\ref{numeric}. Since this condition is independent of the choice of $Y$, because we can choose the divisor $H$ to be ample and to vary in a small deformation, the first part of (i) follows, and the remaining statements are clear.
\end{proof}

In fact, the argument above shows:

\begin{lemma}\label{definv} The set of effective numerical exceptional curves and the set $\overline{\mathcal{A}}_{\text{\rm{gen}}}$ are  locally constant, and hence are invariant in a global deformation with trivial monodromy under the induced identifications. \qed
\end{lemma}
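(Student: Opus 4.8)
The plan is to show that both objects are cut out by conditions that involve only the intersection form on $H^2$, the fixed classes $[D_i]$, and a single auxiliary ample class; such conditions are tautologically unchanged under the identifications coming from a deformation with trivial monodromy. This is precisely the content of the argument already given for Lemma~\ref{describeA}, which I would organize as follows.

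First, whether a class $\alpha\in H^2(Y;\Zee)$ is a numerical exceptional curve — the conditions $\alpha^2=-1$ and $\alpha\cdot[K_Y]=-1$ — depends only on the lattice $H^2(Y;\Zee)$ and the class $[K_Y]=-\sum_i[D_i]$, both of which are preserved under the identifications in a deformation with trivial monodromy; so the set of numerical exceptional \emph{classes} is literally constant. For the effectivity condition, fix a point $t_0$ in the base and an ample divisor $H$ on $Y_{t_0}$. Since ampleness is an open condition in a flat family, $[H]$ remains ample — in particular nef with $[H]\cdot D>0$ — on $Y_t$ for all $t$ in a neighborhood of $t_0$. By Lemma~\ref{numeric}(i) applied to $Y_t$, a numerical exceptional class $\alpha$ is effective on $Y_t$ if and only if $\alpha\cdot[H]\ge 0$, and this inequality is computed with the fixed pairing, hence is independent of $t$. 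Thus the set of effective numerical exceptional curves is locally constant near $t_0$, and $t_0$ was arbitrary.

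Next, for $\overline{\mathcal{A}}_{\text{\rm{gen}}}$: by Theorem~\ref{surjper} a generic small deformation has no $-2$-curves, so by Lemma~\ref{describeA}(i) one has $\overline{\mathcal{A}}_{\text{\rm{gen}}}(Y_t)=\{x\in\mathcal{C}^+: x\cdot[D_i]\ge 0 \text{ for all }i,\ x\cdot\alpha\ge 0 \text{ for all effective numerical exceptional }\alpha\text{ on }Y_t\}$. The classes $[D_i]$ are fixed, and the set of effective numerical exceptional curves was just shown to be locally constant, so $\overline{\mathcal{A}}_{\text{\rm{gen}}}$ is locally constant as a subset of $H^2(Y;\Ar)$. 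Since a locally constant assignment of subsets over a connected base is constant, this gives the asserted invariance under a global deformation with trivial monodromy.

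The one input that is not pure bookkeeping, and hence the place to be careful, is the openness of ampleness under small deformations: it is what allows a single divisor $H$ to serve the role required by Lemma~\ref{numeric}(i) uniformly over a whole neighborhood in the base. Note that one genuinely wants $H$ ample, not merely nef with $H\cdot D>0$: ampleness gives $H\cdot D_i>0$ for every $i$, so that for $\alpha=[G]$ with $G\ne 0$ effective one has $H\cdot\alpha>0$ strictly, and the troublesome case of Lemma~\ref{remarkafternumeric} — where $\alpha$ is a nontrivial integral combination of the $[D_i]$ — cannot occur. Everything else reduces to the observation that the relevant cones and sets are defined by the intersection pairing together with a finite amount of fixed data.
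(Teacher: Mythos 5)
Your proof is correct and is essentially the paper's own argument: the paper proves this lemma by pointing back to the proof of Lemma~\ref{describeA}, which likewise characterizes the effective numerical exceptional classes as those with $\alpha^2=\alpha\cdot[K_Y]=-1$ and $\alpha\cdot[H]\ge 0$ for an ample $H$ that persists under small deformation, and then reads off the local constancy of $\overline{\mathcal{A}}_{\text{\rm{gen}}}$ from Lemma~\ref{describeA}(i). Your closing remark about needing $H$ ample is right for the reason you give first (openness of ampleness in the family); note, though, that Lemma~\ref{numeric}(i) itself only needs $H$ nef with $H\cdot D>0$, so the appeal to Lemma~\ref{remarkafternumeric} is not actually required.
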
 

\begin{lemma}\label{reflect}  If $C$ is a $-2$-curve on $Y$, then the wall $W^C$ meets the interior of $\overline{\mathcal{A}}_{\text{\rm{gen}}}$, and in fact $r_C( \overline{\mathcal{A}}_{\text{\rm{gen}}}) = \overline{\mathcal{A}}_{\text{\rm{gen}}}$, where $r_C\colon H^2(Y; \Ar) \to H^2(Y; \Ar)$ is reflection in the class $[C]$.
 Hence $\overline{\mathcal{A}}(Y)$ is a fundamental domain for the action of the  group $\mathsf{W}({\Delta_Y})$ on $\overline{\mathcal{A}}_{\text{\rm{gen}}}$, where $\mathsf{W}({\Delta_Y})$ is the group  generated by the reflections in the classes in the set  $\Delta_Y$ of $-2$-curves on $Y$. 
\end{lemma}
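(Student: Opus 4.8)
The plan is to combine the description of $\overline{\mathcal{A}}_{\text{\rm{gen}}}$ from Lemma~\ref{describeA} with the local finiteness of the wall collection coming from the $-2$-curves, and then invoke the standard structure theory of reflection groups acting on a cone. First I would show that if $C$ is a $-2$-curve on $Y$, then $W^C$ meets the interior of $\overline{\mathcal{A}}_{\text{\rm{gen}}}$. By Lemma~\ref{describeA}(i), $\overline{\mathcal{A}}_{\text{\rm{gen}}}$ is cut out in $\mathcal{C}^+$ by the inequalities $x\cdot[D_i]\geq 0$ and $x\cdot\alpha\geq 0$ for $\alpha$ an effective numerical exceptional curve; since $C$ is disjoint from no constraint in an essential way — concretely, since $[C]\cdot[D_i]\geq 0$ for all $i$ (as $C\neq D_i$) and $[C]\cdot\alpha\geq 0$ for every effective numerical exceptional curve $\alpha$ (because on a surface with no $-2$-curves the class $[C]$ would itself have to be effective or anti-effective, so one argues on a generic deformation that $\alpha$ moves without meeting $C$), the hyperplane $[C]^\perp$ cannot be a supporting hyperplane of $\overline{\mathcal{A}}_{\text{\rm{gen}}}$. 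Hence $W^C$ passes through the interior. The cleanest way to make this precise is to pick a nef and big $H$ on $Y$ with $H\cdot D>0$ and $H\cdot G>0$ for all irreducible $G\neq D_i$, note $H\cdot C>0$, and then use that $r_C(H) = H + (H\cdot C)[C]$ still pairs non-negatively with every $[D_i]$ and every effective numerical exceptional curve; the same holds for all $x$ in a neighborhood of $H$ inside $[C]^\perp$-translates, which forces $W^C$ to be interior.

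Next I would prove $r_C(\overline{\mathcal{A}}_{\text{\rm{gen}}}) = \overline{\mathcal{A}}_{\text{\rm{gen}}}$. The key input is that $r_C$ permutes the set of effective numerical exceptional curves: this is exactly the content that is needed, and I would deduce it from Lemma~\ref{definv} together with the fact that $r_C$ is realized geometrically. Indeed, contracting $C$ (or passing to a deformation in which $[C]$ or $-[C]$ is effective and then analyzing) one sees that $r_C$ sends the class of an exceptional curve to the class of an exceptional curve on a suitable model; since both the set of effective numerical exceptional curves and the classes $[D_i]$ are preserved by $r_C$ (the $[D_i]$ because $[C]\cdot[D_i]=0$, so $r_C$ fixes each $[D_i]$; this uses that $C$, being a $-2$-curve, is disjoint from $D$), and since $r_C$ preserves $\mathcal{C}^+$, it must preserve the cone $\overline{\mathcal{A}}_{\text{\rm{gen}}}$ cut out by those data. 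The one point requiring care: I must justify that $[C]\cdot[D_i]=0$, i.e.\ that a $-2$-curve is automatically disjoint from $D$ — this follows from adjunction as in the proof of the lemma giving $\overline{\mathcal{A}}(Y)$, since $C\neq D_i$ forces $C\cdot D\geq 0$ and $C^2 - C\cdot D = 2p_a(C)-2 = -2$ with $C^2=-2$ gives $C\cdot D=0$.

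Finally, to conclude that $\overline{\mathcal{A}}(Y)$ is a fundamental domain for $\mathsf{W}(\Delta_Y)$ acting on $\overline{\mathcal{A}}_{\text{\rm{gen}}}$, I would set up the standard picture: the walls $\{W^C : C\in\Delta_Y\}$ are locally finite on $\mathcal{C}^+$ (by the cited result in \cite{FriedmanMorgan}, since all $[C]$ have square $-2$), they all pass through the interior of $\overline{\mathcal{A}}_{\text{\rm{gen}}}$ by the first step, each $r_C$ preserves $\overline{\mathcal{A}}_{\text{\rm{gen}}}$ by the second step, and $\overline{\mathcal{A}}(Y)$ is precisely the sub-chamber of $\overline{\mathcal{A}}_{\text{\rm{gen}}}$ carved out by the inequalities $x\cdot[C]\geq 0$, $C\in\Delta_Y$, by Lemma~\ref{describeA}(ii). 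This is exactly the hypothesis set of the Tits/Bourbaki theorem on a group generated by reflections in a locally finite family of hyperplanes: the closure of a chamber is a strict fundamental domain. I would cite the relevant statement (e.g.\ Bourbaki, \emph{Groupes et algèbres de Lie}, Ch.~V, or the treatment of hyperbolic reflection groups in Vinberg) and check its hypotheses are met. The main obstacle I anticipate is the second step — verifying cleanly that $r_C$ maps effective numerical exceptional curves to effective numerical exceptional curves without circularity; the safe route is to reduce, via the surjectivity of the period map (Theorem~\ref{surjper}) and Lemma~\ref{definv}, to a single deformation where the geometry of the reflection is transparent (e.g.\ where $C$ becomes a genuine $(-2)$-curve that one can reflect through by elementary transformations), and transport the conclusion back using that the relevant sets are locally constant.
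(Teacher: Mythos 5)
Your skeleton is right --- reduce everything to showing that $r_C$ permutes the set of effective numerical exceptional curves, then cut out $\overline{\mathcal{A}}(Y)$ inside $\overline{\mathcal{A}}_{\text{\rm{gen}}}$ by the inequalities $x\cdot[C]\geq 0$ and quote Bourbaki --- but the central step is exactly the one you leave open, and the routes you sketch for it do not close the gap. Your first attempt is circular: asserting that $r_C([H]) = [H] + (H\cdot C)[C]$ ``still pairs non-negatively with every effective numerical exceptional curve'' is precisely the statement that $r_C$ preserves the cone, which is what you are trying to prove. Your fallback (``$[C]\cdot\alpha\geq 0$ for every effective numerical exceptional curve $\alpha$'') is false: if $E$ is an exceptional curve with $E\cdot C=1$, then $\alpha=[E]+[C]$ is an effective numerical exceptional curve with $\alpha\cdot[C]=-1$. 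And the monodromy/deformation route you gesture at (contract $C$, smooth, transport via Lemma~\ref{definv}) is the content of Remark~\ref{monodromyinvar}(i) in the paper, but as you yourself note it is not ``transparent'' --- it requires knowing that $\overline{\mathcal{A}}_{\text{\rm{gen}}}$ is monodromy invariant, which again needs an argument.

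The missing idea is to choose the auxiliary nef divisor to be $r_C$-\emph{invariant}. Apply Proposition~\ref{constructnef} to the single curve $C$: this produces a nef and big divisor $H_0$ with $H_0\cdot C=0$ and $H_0\cdot G>0$ for every irreducible $G\neq C$ (in particular $H_0\cdot D>0$). By Lemma~\ref{numeric}(i), a numerical exceptional curve $\alpha$ is effective if and only if $\alpha\cdot[H_0]\geq 0$. Since $[H_0]\in [C]^\perp$, the class $[H_0]$ is fixed by $r_C$, so $r_C$ preserves the condition $\alpha\cdot[H_0]\geq 0$ and hence permutes the effective numerical exceptional curves; it fixes each $[D_i]$ because $C\cdot D=0$ by adjunction (which you did verify). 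This gives $r_C(\overline{\mathcal{A}}_{\text{\rm{gen}}})=\overline{\mathcal{A}}_{\text{\rm{gen}}}$ via Lemma~\ref{describeA}(i), and the wall statement then follows formally (if $W^C$ missed the interior, the connected interior would lie strictly on one side of $[C]^\perp$ and could not be $r_C$-stable), so you do not need your first paragraph at all. With that substitution the rest of your argument --- local finiteness of the walls and the Bourbaki fundamental-domain theorem --- goes through as you describe.
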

\begin{proof}  Clearly, if $r_C( \overline{\mathcal{A}}_{\text{\rm{gen}}}) = \overline{\mathcal{A}}_{\text{\rm{gen}}}$, then $W^C$ meets the interior of $\overline{\mathcal{A}}_{\text{\rm{gen}}}$.
To see that $r_C( \overline{\mathcal{A}}_{\text{\rm{gen}}}) = \overline{\mathcal{A}}_{\text{\rm{gen}}}$, assume first more generally that $\beta\in \Lambda$ is any class with $\beta^2 = -2$, and let $r_\beta$ be the corresponding reflection. Then $r_\beta$ permutes the set of $\alpha \in H^2(Y; \Zee)$ such that $\alpha^2 =\alpha\cdot [K_Y] =-1$, but does not necessarily preserve the condition that $\alpha$ is effective, i.e.\ that $\alpha \cdot [H] \geq 0$ for some nef   divisor $H$ on $Y$ with $H\cdot D >0$. However, for $\beta = [C]$, there exists by Proposition~\ref{constructnef} a nef  and big divisor  $H_0$ such that $H_0 \cdot C = 0$ and $H\cdot D > 0$. Hence  $[H_0]$ is invariant under $r_C$,   and so $r_C$ permutes the set of $\alpha \in H^2(Y; \Zee)$ such that $\alpha^2 =\alpha\cdot [K_Y] =-1$ and   $\alpha \cdot [H_0] \geq 0$. Thus $r_C$ permutes the set of effective numerical exceptional curves and hence the faces of $\overline{\mathcal{A}}_{\text{\rm{gen}}}$, so that $r_C( \overline{\mathcal{A}}_{\text{\rm{gen}}}) = \overline{\mathcal{A}}_{\text{\rm{gen}}}$. Since $\overline{\mathcal{A}}(Y) \subseteq \overline{\mathcal{A}}_{\text{\rm{gen}}}$ is given by (ii) of Lemma~\ref{describeA}, the final statement is then a general result in the theory of reflection groups (cf.\ \cite{Bour}, V \S3).
\end{proof}

\begin{remark}\label{monodromyinvar} (i) The argument for the first part of Lemma~\ref{reflect} essentially boils down to the following: let $\overline{Y}$ be the normal surface obtained by contracting $C$. Then the reflection $r_C$ is the monodromy associated to a generic smoothing of the singular surface $\overline{Y}$, and the cone $\overline{\mathcal{A}}_{\text{\rm{gen}}}$ is invariant under monodromy.

\smallskip
\noindent (ii) If $E$ is an exceptional curve, then $W^E$  is a face of $\overline{\mathcal{A}}(Y)$. For a generic $Y$ (i.e.\ no $-2$-curves), Lemma~\ref{reflect} then says that the set of exceptional curves on $Y$ is invariant under the reflection group generated by all classes of square $-2$ which become the classes of a $-2$-curve under some specialization. A somewhat more involved statement holds in the nongeneric case.
\end{remark}

\begin{lemma}\label{permapinvar} With $\mathsf{W}({\Delta_Y})$ as in Definition~\ref{defcurves},  for all $w\in \mathsf{W}({\Delta_Y})$ and all $\beta \in \Lambda$,  $\varphi_{Y}(w(\alpha)) = \varphi_{Y}(\alpha)$. 
\end{lemma}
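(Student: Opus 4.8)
The plan is to reduce to a single reflection and then observe that $\varphi_Y$ annihilates the class of every $-2$-curve. First note that a $-2$-curve $C$ is actually disjoint from $D$: recall from the adjunction computation made earlier that $C^2 = -2$ together with $C\cdot D \ge 0$ forces $C\cdot D = 0$, and since $C$ is irreducible, is not a component of the effective divisor $D$, and all local intersection numbers are non-negative, $C\cap D = \emptyset$. In particular $[C]\cdot[D_i] = 0$ for every $i$, so $[C]\in\Lambda$, and the reflection $r_C$ fixes each $[D_i]$, hence preserves the span of the $[D_i]$ and therefore its orthogonal complement $\Lambda$. Thus $\mathsf{W}(\Delta_Y)$ acts on $\Lambda$, the statement makes sense, and since $\mathsf{W}(\Delta_Y)$ is generated by the reflections $r_C$ it suffices, by induction on word length, to prove $\varphi_Y(r_C(\beta)) = \varphi_Y(\beta)$ for a single $-2$-curve $C$ and an arbitrary $\beta\in\Lambda$.

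For this the one substantive point is that $\varphi_Y([C]) = 1$. Indeed, since $C$ is disjoint from $D$ the canonical section of $\scrO_Y(C)$ is nowhere zero along $D$, so $\scrO_Y(C)|D \cong \scrO_D$; this is the identity element of $\Pic^0 D$, which corresponds to $1\in\mathbb{G}_m$ under the orientation-defined identification. Now $\varphi_Y$ is a homomorphism and $r_C(\beta) = \beta - (\beta\cdot[C])[C]$, so
$$\varphi_Y(r_C(\beta)) = \varphi_Y(\beta)\cdot\varphi_Y([C])^{-(\beta\cdot[C])} = \varphi_Y(\beta),$$
completing the induction and hence the proof.

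There is no real obstacle here; the only things requiring a moment's care are the passage from numerical orthogonality $C\cdot D = 0$ to genuine disjointness $C\cap D = \emptyset$ (which is what makes $\scrO_Y(C)|D$ literally the trivial bundle rather than merely of multidegree $0$), and the verification that $[C]$ lies in $\Lambda$ so that $\varphi_Y([C])$ is defined in the first place.
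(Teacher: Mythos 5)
Your proof is correct and is essentially the paper's argument, which simply observes that $\varphi_Y([C])=1$ for a $-2$-curve $C$ (because $C\cap D=\emptyset$) and hence $\varphi_Y(r_C(\alpha))=\varphi_Y(\alpha)$; you have just spelled out the disjointness and the reduction to a single reflection. One immaterial slip: for a class of square $-2$ the reflection is $r_C(\beta)=\beta+(\beta\cdot[C])[C]$, not $\beta-(\beta\cdot[C])[C]$, but since $\varphi_Y([C])^{\pm(\beta\cdot[C])}=1$ the conclusion is unaffected.
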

\begin{proof} This is clear since $\varphi_Y([C]) =1$, hence $\varphi_{Y}(r_C(\alpha)) = \varphi_{Y}(\alpha)$ for all $\alpha \in \Lambda$.
\end{proof}

\begin{lemma}\label{Weyltrans} Suppose that $C=\sum_ia_iC_i$, where the $C_i$ are $-2$-curves, $a_i\in \Zee$, $C^2 = -2$, the support of $C$ is connected, and $(C_i\cdot C_j)$ is negative definite. Then there exists an element $w$ in the  group generated by reflections in the $[C_i]$ such that $w([C]) = [C_i]$ for some $i$.
\end{lemma}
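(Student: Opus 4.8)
\emph{Proof proposal.}
The plan is to regard $\{[C_i]\}$ as a set of simple roots of the (negative definite) root lattice they span, and to move $[C]$ onto one of the $[C_i]$ by a sequence of the given reflections that lower a suitable ``height.'' Write $L\subseteq H^2(Y;\Zee)$ for the sublattice spanned by the $[C_i]$, and regard it as a positive definite even lattice via $\langle x,y\rangle = -x\cdot y$; thus $\langle [C_i],[C_i]\rangle = 2$, while $\langle [C_i],[C_j]\rangle = -C_i\cdot C_j\le 0$ for $i\ne j$, since distinct irreducible curves meet non-negatively. Each $r_{[C_i]}$ is an integral isometry of $L$, given by $r_{[C_i]}(x) = x + (x\cdot [C_i])[C_i]$; let $\mathsf{W}_0$ be the group they generate. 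Since the intersection form is negative definite, the $[C_i]$ are linearly independent, so every element of $L$ has well-defined integer coordinates with respect to the $[C_i]$; in particular $[C]$ does, and $[C]\ne 0$ because $[C]^2=-2$.

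The first real step is a sign dichotomy: any $\beta\in L$ with $\langle\beta,\beta\rangle = 2$ is either a non-negative or a non-positive integral combination of the $[C_i]$. Indeed, write $\beta = \beta_+ - \beta_-$ as the difference of its positive and negative parts, i.e.\ non-negative combinations of the $[C_i]$ with disjoint supports. Since each cross term $\langle[C_i],[C_j]\rangle$ with $i\ne j$ is $\le 0$, one has $\langle\beta_+,\beta_-\rangle\le 0$, hence $2 = \langle\beta,\beta\rangle\ge \langle\beta_+,\beta_+\rangle + \langle\beta_-,\beta_-\rangle$; as $L$ is positive definite and even, every nonzero $\beta_\pm$ contributes at least $2$, so one of $\beta_+,\beta_-$ must vanish.

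Now apply the dichotomy to $\beta = [C]$. Replacing $C$ by $-C$ if necessary — harmless, since at the end a single extra reflection $r_{[C_i]}$ converts $-[C_i]$ back to $[C_i]$ — I may assume $[C] = \sum_i a_i[C_i]$ with all $a_i\ge 0$ and not all zero, and set $\operatorname{ht}([C]) = \sum_i a_i\ge 1$. If $\operatorname{ht}([C]) = 1$ then $[C] = [C_j]$ for some $j$ and we are done. If $\operatorname{ht}([C])\ge 2$, then from $2 = \langle[C],[C]\rangle = \sum_j a_j\langle[C],[C_j]\rangle$ with all $a_j\ge 0$ there is some $j$ with $a_j>0$ and $\langle[C],[C_j]\rangle>0$, i.e.\ $[C]\cdot[C_j]\le -1$. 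Then $r_{[C_j]}([C]) = [C] + ([C]\cdot[C_j])[C_j]$ alters only the $j$-th coordinate, strictly decreasing it, so $\operatorname{ht}(r_{[C_j]}([C])) < \operatorname{ht}([C])$; and since $\operatorname{ht}([C])\ge 2$ some coordinate $a_i$ with $i\ne j$ remains positive, so by the dichotomy $r_{[C_j]}([C])$ again has all coordinates $\ge 0$. Induction on the height now produces $w\in\mathsf{W}_0$ with $w([C]) = [C_j]$ for some $j$, as claimed.

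I expect all the computations to be routine; the one place to be slightly careful is the last paragraph — verifying that the chosen reflection strictly lowers the height and, crucially, keeps all coordinates non-negative (which is exactly what the sign dichotomy buys us), together with the trivial sign normalization at the outset. As an alternative, one could instead invoke the classical facts that a connected negative definite even lattice generated by $(-2)$-vectors is an $ADE$ root lattice, that all of its $(-2)$-vectors are roots, and that its Weyl group acts transitively on roots (see \cite{Bour}, Ch.~V--VI); but the direct height argument requires none of this.
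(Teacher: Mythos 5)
Your proof is correct, but it takes a genuinely different route from the paper's: the paper disposes of the lemma in one line by citing the classical fact that the Weyl group of an irreducible, simply-laced root system acts transitively on the set of roots, whereas you unpack that fact into a self-contained argument. Your two ingredients --- the sign dichotomy for norm-$2$ vectors (which rests on $C_i\cdot C_j\ge 0$ for $i\ne j$, i.e.\ the $[C_i]$ behave like simple roots) and the height-lowering induction --- are precisely the standard proof of the cited transitivity, so nothing is lost and the reader needs no external reference. One step you flag but do not fully justify: after reflecting in $[C_j]$ you need some coordinate $a_i$ with $i\ne j$ to remain positive; this requires ruling out $[C]=a_j[C_j]$ with $a_j\ge 2$, which follows immediately from $\langle [C],[C]\rangle = 2a_j^2 > 2$ --- worth one sentence. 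It is also worth noting that your argument never uses connectedness of the support of $C$: the paper needs it to invoke \emph{irreducibility} of the root system, while in your version the norm condition $C^2=-2$ automatically confines $[C]$ to a single connected component of the configuration (an orthogonal decomposition $[C]=A+B$ into nonzero pieces would force $\langle [C],[C]\rangle\ge 4$). So your proof is marginally more general, at the cost of being longer; the paper's is shorter but leans on \cite{Bour}.
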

\begin{proof} This follows from the well known fact that, if $R$ is an irreducible root system such that all roots have the same length, then the Weyl group $\mathsf{W}(R)$ acts transitively on the set of roots.
\end{proof}

\begin{theorem}\label{mainprop}  Let $\beta \in \Lambda$ with $\beta^2 = -2$. Then the following are equivalent:
\begin{enumerate}
\item[\rm(i)] Let $Y_1$ be  a deformation of $Y$ with trivial monodromy such that $\varphi_{Y_1}(\beta) = 1$. Then, with  $\mathsf{W}({\Delta_{Y_1}})$ as in Definition~\ref{defcurves}, there exists   $w\in \mathsf{W}({\Delta_{Y_1}})$  such that $w(\beta)=[C]$, where   $C$ is a $-2$-curve  on $Y_1$. In particular, if $Y_1$ is generic subject to the condition that $\varphi_{Y_1}(\beta) = 1$ (i.e.\ if $\Ker  \varphi_{Y_1}  = \Zee \cdot \beta$), then $\pm \beta = [C]$ for a $-2$-curve $C$.
\item[\rm(ii)] The wall $W^\beta$ meets the interior of $\overline{\mathcal{A}}_{\text{\rm{gen}}}$.
\item[\rm(iii)] If $r_\beta$ is reflection in the class $\beta$, then $r_\beta(\overline{\mathcal{A}}_{\text{\rm{gen}}}) = \overline{\mathcal{A}}_{\text{\rm{gen}}}$.
\end{enumerate}
\end{theorem}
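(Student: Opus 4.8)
The plan is to prove the cyclic chain $(i)\Rightarrow(iii)\Rightarrow(ii)\Rightarrow(i)$, with the last implication carrying essentially all the content. The ingredients are: the fact that each reflection $r_{[C]}$ in a $-2$-curve preserves $\overline{\mathcal{A}}_{\text{gen}}$ and that $\overline{\mathcal{A}}(Y)$ is a fundamental domain for $\mathsf{W}(\Delta_Y)$ (Lemma~\ref{reflect}); deformation-invariance of $\overline{\mathcal{A}}_{\text{gen}}$ (Lemma~\ref{definv}); $\varphi_Y$-invariance under $\mathsf{W}(\Delta_Y)$ (Lemma~\ref{permapinvar}); the numerical criterion of Lemma~\ref{numeric}(ii); the description of $\overline{\mathcal{A}}(Y)$ inside $\overline{\mathcal{A}}_{\text{gen}}$ (Lemma~\ref{describeA}(ii)); and surjectivity of the period map (Theorem~\ref{surjper}).

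For $(i)\Rightarrow(iii)$, fix one deformation $Y_1$ of $Y$ with trivial monodromy and $\varphi_{Y_1}(\beta)=1$ (take $\varphi\equiv 1$ in Theorem~\ref{surjper}), and identify $H^2(Y_1)$ with $H^2(Y)$. By hypothesis $w(\beta)=[C]$ for some $w\in\mathsf{W}(\Delta_{Y_1})$ and a $-2$-curve $C$ on $Y_1$. Each generator $r_{[C']}$ of $\mathsf{W}(\Delta_{Y_1})$ preserves $\overline{\mathcal{A}}_{\text{gen}}(Y_1)=\overline{\mathcal{A}}_{\text{gen}}(Y)$ by Lemmas~\ref{reflect} and~\ref{definv}, hence so does every element of $\mathsf{W}(\Delta_{Y_1})$; since $r_\beta=w^{-1}r_{[C]}w$, this gives $r_\beta(\overline{\mathcal{A}}_{\text{gen}})=\overline{\mathcal{A}}_{\text{gen}}$. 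For $(iii)\Rightarrow(ii)$: pick $x$ in the interior of $\overline{\mathcal{A}}_{\text{gen}}$; if $x\cdot\beta=0$ we are done, and otherwise $\frac12(x+r_\beta x)$ lies in the interior by convexity (as $r_\beta$ fixes $\overline{\mathcal{A}}_{\text{gen}}$) and satisfies $(x+r_\beta x)\cdot\beta=0$, so $W^\beta$ meets the interior. (This is the opening observation of the proof of Lemma~\ref{reflect}.)

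The substance is $(ii)\Rightarrow(i)$. Let $Y_1$ be an \emph{arbitrary} deformation with trivial monodromy and $\varphi_{Y_1}(\beta)=1$; by Lemma~\ref{definv} the wall $W^\beta$ still meets the interior of $\overline{\mathcal{A}}_{\text{gen}}(Y_1)$. Let $\Phi\subseteq\Lambda$ be the $\mathsf{W}(\Delta_{Y_1})$-orbit of the set of $-2$-curve classes on $Y_1$; every $\gamma\in\Phi$ has $\gamma^2=-2$ and $\varphi_{Y_1}(\gamma)=1$ (Lemma~\ref{permapinvar}), and $\{W^\gamma:\gamma\in\Phi\}$ is locally finite on $\mathcal{C}^+$ since each $\gamma^2=-2$. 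Because $\overline{\mathcal{A}}(Y_1)$ is a fundamental domain for $\mathsf{W}(\Delta_{Y_1})$ on $\overline{\mathcal{A}}_{\text{gen}}(Y_1)$ (Lemma~\ref{reflect}) and its walls meeting the interior of $\overline{\mathcal{A}}_{\text{gen}}$ are exactly the $W^{[C]}$, $C$ a $-2$-curve (Lemma~\ref{describeA}(ii)), the theory of reflection groups (cf.\ \cite{Bour}, V \S\S3--4) identifies $\Phi$ with the root system of $\mathsf{W}(\Delta_{Y_1})$ and the connected components of $\bigl(\text{int}\,\overline{\mathcal{A}}_{\text{gen}}\bigr)\setminus\bigcup_{\gamma\in\Phi}W^\gamma$ with the translates $w(\text{int}\,\overline{\mathcal{A}}(Y_1))$. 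If $\beta^\perp=\gamma^\perp$ for some $\gamma\in\Phi$ then $\beta=\pm\gamma$ (both are primitive of square $-2$); writing $\gamma=w([C])$ with $C$ a $-2$-curve gives $w^{-1}(\beta)=[C]$ or $(r_{[C]}w^{-1})(\beta)=[C]$, which is $(i)$. Otherwise $\gamma^\perp\cap\beta^\perp$ has codimension two for every $\gamma\in\Phi$, so by local finiteness we may choose $x_0\in W^\beta\cap\text{int}\,\overline{\mathcal{A}}_{\text{gen}}$ lying on no $W^\gamma$; then $x_0$ lies in a chamber $w(\text{int}\,\overline{\mathcal{A}}(Y_1))$, so $w^{-1}(x_0)$ is ample on $Y_1$ and orthogonal to $\delta:=w^{-1}(\beta)\in\Lambda$, which has $\delta^2=-2$ and $\varphi_{Y_1}(\delta)=1$. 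Perturbing $w^{-1}(x_0)$ to integral ample divisors $H_+,H_-$ with $H_+\cdot\delta>0$ and $H_-\cdot\delta<0$ and applying Lemma~\ref{numeric}(ii) to $\delta$ with $H_+$ and to $-\delta$ with $H_-$, we find that $\delta$ and $-\delta$ are both classes of effective divisors; their sum then represents $0$ by a nonzero effective divisor, which is absurd. Hence this case does not occur, and $(i)$ holds.

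For the ``in particular'' clause, observe that since $\beta$ is primitive, $\Lambda/\Zee\beta$ is torsion-free and therefore embeds in $\mathbb{G}_m$, so Theorem~\ref{surjper} supplies a $Y_1$ with $\Ker\varphi_{Y_1}=\Zee\beta$; the implication just proved gives $w(\beta)=[C]$ for a $-2$-curve $C$, and since $C\cap D=\emptyset$ we have $\varphi_{Y_1}([C])=1$, whence $[C]=w(\beta)\in\Ker\varphi_{Y_1}=\Zee\beta$, i.e.\ $[C]=\pm\beta$. I expect the main obstacle to be precisely the reflection-group bookkeeping in $(ii)\Rightarrow(i)$: one must work with the \emph{full} root system $\Phi$ rather than merely the classes of irreducible $-2$-curves (these already differ for an $A_2$ configuration $C_1,C_2$, where $[C_1]+[C_2]\in\Phi$ need not be an irreducible class), and one must combine the fundamental-domain statement of Lemma~\ref{reflect} with the structure theory of \cite{Bour} to know that the chambers are exactly the $\mathsf{W}(\Delta_{Y_1})$-translates of $\overline{\mathcal{A}}(Y_1)$. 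Everything else --- local finiteness of the walls, the application of Lemma~\ref{numeric}(ii), and the genericity step --- is routine.
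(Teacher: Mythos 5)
Your proposal is correct, and its skeleton matches the paper's: the same cyclic chain $(i)\Rightarrow(iii)\Rightarrow(ii)\Rightarrow(i)$, the conjugation identity $w\circ r_\beta\circ w^{-1}=r_{w(\beta)}$ combined with Lemmas~\ref{reflect} and~\ref{definv} for the first implication, and the reduction of $(ii)\Rightarrow(i)$ to a deformation with $\varphi_{Y_1}(\beta)=1$ using Lemma~\ref{permapinvar} and the fundamental domain property. Where you genuinely diverge is in how $(ii)\Rightarrow(i)$ is finished. The paper moves $x$ into $\overline{\mathcal{A}}(Y_1)$ itself, so that $x=[H]$ with $H$ nef and big, $H\cdot D_i>0$ and $H\cdot\beta=0$; Lemma~\ref{numeric}(ii) then makes $\pm\beta$ effective, say $\beta=[\sum_i a_iC_i]$, and a short analysis ($H\cdot C_i=0$, Hodge index, adjunction) shows the $C_i$ are $-2$-curves forming a connected negative definite configuration, after which Lemma~\ref{Weyltrans} (transitivity of a finite simply-laced Weyl group on its roots) produces the required $w$. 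You instead invoke the chamber structure of the possibly infinite reflection group $\mathsf{W}(\Delta_{Y_1})$ acting on the interior of $\overline{\mathcal{A}}_{\text{gen}}$, reduce to the dichotomy ``$W^\beta=W^\gamma$ for some $\gamma\in R^{\text{nod}}$ or not,'' and kill the second case by exhibiting an ample class orthogonal to $w^{-1}(\beta)$ and perturbing it in both directions so that $\pm w^{-1}(\beta)$ would be simultaneously effective. Both arguments are sound; yours leans more heavily on the Bourbaki machinery (you need that the reflections of $\mathsf{W}(\Delta_{Y_1})$ are exactly the $r_\gamma$ for $\gamma\in\mathsf{W}(\Delta_{Y_1})\cdot\Delta_{Y_1}$ and that the open chambers are precisely the translates of the interior of $\overline{\mathcal{A}}(Y_1)$), but it bypasses Lemma~\ref{Weyltrans} and the component-by-component analysis of the effective representative. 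The paper's version has the advantage of actually producing the effective divisor $\sum_i a_iC_i$ supported on $-2$-curves, a device that is reused later (e.g.\ in the proof of Proposition~\ref{aprop}). Your handling of the ``in particular'' clause, via primitivity of $\beta$ and Theorem~\ref{surjper}, is fine and somewhat more explicit than the paper's.
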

\begin{proof} Lemma~\ref{reflect} implies that (i) $\implies$  (iii) in case $Y=Y_1$ and $\beta = [C]$ where   $C$ is a $-2$-curve. The   case  where $w(\beta) = [C]$ follows easily from this since, for all $w\in \mathsf{W}({\Delta_{Y_1}})$, $w\circ r_\beta \circ w^{-1} = r_{w(\beta)}$. Lemma~\ref{definv} then handles the case where $Y_1$ is replaced by a general deformation $Y$. Also, clearly (iii) $\implies$ (ii). So it is enough to show that (ii) $\implies$ (i). In fact, by Lemma~\ref{Weyltrans}, it is enough to show  that, if $Y$ is any surface such that  $\varphi_Y(\beta) = 1$ and  $W^\beta$ meets the interior of $\overline{\mathcal{A}}_{\text{\rm{gen}}}$, then there exists a $w\in \mathsf{W}({\Delta_Y})$ such that $w(\beta) = [\sum_ia_iC_i]$ where $a_i\in \Zee^+$, the $C_i$ are  curves disjoint from $D$, and $\bigcup_iC_i$ is connected. 

By hypothesis, there exists an $x$ in the interior of $\overline{\mathcal{A}}_{\text{\rm{gen}}}$ such that $x\cdot \beta =0$. In particular, $x\cdot [D_i] >0$ for all $i$. We can assume that $x=[H]$ is the class of a divisor $H$. After replacing $x$ by $w(x)$ and $\beta$ by $w(\beta)$ for some $w\in \mathsf{W}({\Delta_Y})$, we can assume that $x$ (and hence $H$) lies in $\overline{\mathcal{A}}(Y)$, so that $H$ is a nef and big divisor with $H\cdot D_i > 0$ for all $i$, and we still have $\varphi_Y(\beta) = 1$ by Lemma~\ref{permapinvar}. By Lemma~\ref{numeric}, possibly after replacing $\beta$ by $-\beta$, $\beta = [\sum_ia_iC_i]$ where the $C_i$ are irreducible curves and $a_i \in \Zee^+$. Since $\beta\cdot [H] =\sum_ia_i(C_i\cdot H)=0$, $C_i\cdot H \geq 0$, and $D_j\cdot H > 0$, $C_i\cdot H = 0$ for all $i$ and no $C_i$ is equal to $D_j$ for any $j$. Hence the  $C_i$ are curves meeting each $D_j$ in at most finitely many points and $\sum_ia_i(C_i\cdot D_j)=0$,   so that $C_i\cap D_j =\emptyset$. Finally each $(C_i)^2 < 0$ by Hodge index, and so each $C_i$ is a $-2$-curve. Moreover the $C_i$ span a negative definite lattice, and in particular their classes are independent. From this, the statement about the connectedness of $\bigcup_iC_i$ is clear. 
\end{proof}

\begin{definition} Let $R=R_Y$ be the set  of all $\beta \in \Lambda$ such that $\beta ^2 = -2$ and such that there exists some deformation of $Y$ for which $\beta$ becomes the class of a $-2$-curve. Following \cite{GHK}, we call $R$ the set of \textsl{Looijenga roots} (or briefly \textsl{roots}) of $Y$. Note that $R$ only depends on the deformation type of $Y$.

The definition of $R$ is slightly ill-posed, since we have not specified an identification of the cohomologies of the fibers along the deformation. In particular, if $\beta = [C]$ is a $-2$-curve on $Y$, then by (i) of Remark~\ref{monodromyinvar}, if $Y'$ is a nearby deformation of $Y$, then a general smoothing of the ordinary double point on the contraction of $C$ on $Y$ has monodromy which sends $[C]$ to $-[C]$, and hence $-\beta \in R$ as well. To avoid this issue, it is simpler to define $R$ to be the set of $\beta \in \Lambda$, $\beta^2=-2$,  which satisfy either of the equivalent conditions (ii), (iii) of Theorem~\ref{mainprop}.

Given $Y$, let $\Delta_Y$ be the set of classes of $-2$-curves on $Y$ and $\mathsf{W}({\Delta_Y})$ the reflection group generated by $\Delta_Y$. Finally set $R^{\text{\rm{nod}}}$, the set of \textsl{nodal classes}, to be $\mathsf{W}({\Delta_Y})\cdot \Delta_Y$. Then $R^{\text{\rm{nod}}} \subseteq R$. 
\end{definition}

\begin{corollary}\label{preserveamp} \text{\rm{(i)}} If $f\colon H^2(Y; \Zee) \to H^2(Y; \Zee)$ is an integral isometry preserving the classes $[D_i]$ such that $f(\overline{\mathcal{A}}_{\text{\rm{gen}}}) = \overline{\mathcal{A}}_{\text{\rm{gen}}}$, then $f(R) = R$.

\smallskip
\noindent \text{\rm{(ii)}} If $\mathsf{W}(R)$ is the reflection group generated by reflections in the elements of $R$, then $\mathsf{W}(R) \cdot R = R$ and $w(\overline{\mathcal{A}}_{\text{\rm{gen}}}) = \overline{\mathcal{A}}_{\text{\rm{gen}}}$ for all $w\in \mathsf{W}(R)$.
\qed
\end{corollary}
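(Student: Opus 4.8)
For part (i), the plan is to characterize $R$ purely in terms of the combinatorial data preserved by $f$, namely the lattice $H^2(Y;\Zee)$, the classes $[D_i]$, the positive cone $\mathcal{C}^+$, and the cone $\overline{\mathcal{A}}_{\text{gen}}$. By Theorem~\ref{mainprop}, a class $\beta \in \Lambda$ with $\beta^2 = -2$ lies in $R$ if and only if it satisfies condition (ii): the wall $W^\beta$ meets the interior of $\overline{\mathcal{A}}_{\text{gen}}$. Now $f$ preserves $H^2(Y;\Zee)$ with its intersection form, so it preserves the set $\{\beta : \beta^2 = -2\}$; it preserves the classes $[D_i]$, hence it preserves their orthogonal complement $\Lambda$; it preserves $\mathcal{C}^+$ (this is part of the hypothesis that $f$ preserves the positive cone, or follows from the fact that $f(\overline{\mathcal{A}}_{\text{gen}}) = \overline{\mathcal{A}}_{\text{gen}}$ and $\overline{\mathcal{A}}_{\text{gen}} \subseteq \mathcal{C}^+$ spans); and by hypothesis it preserves $\overline{\mathcal{A}}_{\text{gen}}$ and therefore its interior. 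Since $f$ is a homeomorphism of $H^2(Y;\Ar)$ carrying $W^\beta$ to $W^{f(\beta)}$ and carrying the interior of $\overline{\mathcal{A}}_{\text{gen}}$ to itself, we conclude: $W^\beta$ meets the interior of $\overline{\mathcal{A}}_{\text{gen}}$ $\iff$ $W^{f(\beta)}$ meets the interior of $\overline{\mathcal{A}}_{\text{gen}}$. Thus $\beta \in R \iff f(\beta) \in R$, i.e.\ $f(R) = R$.

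For part (ii), I would argue as follows. First, for a single $\beta \in R$, condition (iii) of Theorem~\ref{mainprop} says precisely that $r_\beta(\overline{\mathcal{A}}_{\text{gen}}) = \overline{\mathcal{A}}_{\text{gen}}$; since $\mathsf{W}(R)$ is generated by such reflections $r_\beta$, it follows immediately that $w(\overline{\mathcal{A}}_{\text{gen}}) = \overline{\mathcal{A}}_{\text{gen}}$ for every $w \in \mathsf{W}(R)$, which is the second assertion. For the first assertion $\mathsf{W}(R) \cdot R = R$, note that each generating reflection $r_\beta$ (for $\beta \in R$) is an integral isometry of $H^2(Y;\Zee)$: indeed $r_\beta(x) = x + (x\cdot\beta)\beta$ since $\beta^2 = -2$, which is integral. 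It preserves each $[D_i]$ because $\beta \in \Lambda = \{[D_i]\}^\perp$, so $[D_i]\cdot\beta = 0$ and $r_\beta([D_i]) = [D_i]$. And we have just shown $r_\beta(\overline{\mathcal{A}}_{\text{gen}}) = \overline{\mathcal{A}}_{\text{gen}}$. Hence each generator of $\mathsf{W}(R)$ satisfies the hypotheses of part (i), so $r_\beta(R) = R$; since this holds for all generators, $w(R) = R$ for every $w \in \mathsf{W}(R)$, i.e.\ $\mathsf{W}(R)\cdot R = R$.

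The only mildly delicate point is making sure the equivalence in Theorem~\ref{mainprop}(ii)--(iii) is invoked correctly for an arbitrary $\beta\in R$ rather than only for classes of actual $-2$-curves; but since the remark following the definition of $R$ explicitly redefines $R$ as the set of $\beta\in\Lambda$ with $\beta^2=-2$ satisfying (ii) or (iii), there is nothing further to check, and the proof is essentially a formal unwinding of definitions together with Theorem~\ref{mainprop}. I expect no substantive obstacle; the main thing to be careful about is the (routine) verification that each $r_\beta$ with $\beta\in R$ is integral and fixes the $[D_i]$, which makes part (i) applicable and thereby closes the loop for part (ii).
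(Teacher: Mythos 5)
Your proof is correct and is essentially the argument the paper intends: the corollary is stated with no written proof precisely because it is the formal unwinding of Theorem~\ref{mainprop} (conditions (ii) and (iii)) that you carry out, including the observation that each $r_\beta$ for $\beta\in R$ is an integral isometry fixing the $[D_i]$ so that part (i) applies to it.
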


\begin{remark} A  result similar to Theorem~\ref{mainprop} classifies the elements of $H^2(Y;\Zee)$ which are represented by the class of a smoothly embedded $2$-sphere of self-inter\-section $-2$ in terms of the ``super $P$-cell" of \cite{FriedmanMorgan}.
\end{remark}

In \cite{Looij}, for the case where the length $r(D) \leq 5$, Looijenga defines a subset $R_L$ of  $\Lambda$ by starting with a particular configuration $B$ of elements of square $-2$ (a \textsl{root basis} in the terminology of \cite{Looij}), and setting $R_L = \mathsf{W}(B)\cdot B$, where $\mathsf{W}(B)$ is the reflection group generated by $B$. In fact, the set $R_L$ is just the set $R$ of Looijenga roots:

\begin{proposition} In the above notation, $R_L = R$.
\end{proposition}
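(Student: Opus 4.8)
The plan is to prove the two inclusions $R_L\subseteq R$ and $R\subseteq R_L$ separately, in each case combining the geometric characterization of $R$ furnished by Theorem~\ref{mainprop} and Corollary~\ref{preserveamp} with the realization results of \cite{Looij}, which are available precisely because $r(D)\le 5$. I will use freely that both $R$ and $R_L$ depend only on the deformation type of $(Y,D)$, so that one may pass to any convenient deformation with trivial monodromy and transport the relevant sets under the induced identifications.

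For $R_L\subseteq R$ it suffices to show $B\subseteq R$: granting this, $\mathsf{W}(B)\subseteq\mathsf{W}(R)$, and Corollary~\ref{preserveamp}(ii) gives $R_L=\mathsf{W}(B)\cdot B\subseteq\mathsf{W}(R)\cdot R=R$. To place a given $b\in B$ in $R$, I would appeal to Looijenga's construction of the root basis, which realizes each $b$ as the class $[C]$ of a $-2$-curve $C$ on a suitable deformation $Y_0$ of $Y$. Then $W^{b}$ is a face of $\overline{\mathcal{A}}(Y_0)$ arising from a $-2$-curve, so by Lemma~\ref{describeA} it meets the interior of $\overline{\mathcal{A}}_{\text{\rm{gen}}}$; this is condition (ii) of Theorem~\ref{mainprop}, whence $b\in R$. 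Equivalently, one may choose $Y_0$ with $\varphi_{Y_0}(b)=1$ and $\Ker\varphi_{Y_0}=\Zee\cdot b$, for which Theorem~\ref{mainprop}(i) gives that $\pm b$ is already the class of a $-2$-curve.

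For $R\subseteq R_L$, take $\beta\in R$, so that $\beta\in\Lambda$ and $\beta^2=-2$. By Theorem~\ref{mainprop}(i) there is a deformation $Y_1$ with $\varphi_{Y_1}(\beta)=1$ together with an element $w\in\mathsf{W}(\Delta_{Y_1})$ satisfying $w(\beta)=[C]$ for a $-2$-curve $C$ on $Y_1$. The second input from \cite{Looij} is that the classes of the $-2$-curves on an arbitrary member of the family of deformations of $Y$ lie in $R_L$; in particular $\Delta_{Y_1}\subseteq R_L$. Since $R_L=\mathsf{W}(B)\cdot B$, every reflection in an element of $R_L$ is $\mathsf{W}(B)$-conjugate to a reflection in an element of $B$, so $\mathsf{W}(R_L)=\mathsf{W}(B)$ and therefore $\mathsf{W}(\Delta_{Y_1})\subseteq\mathsf{W}(B)$. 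Hence $w\in\mathsf{W}(B)$, and as $R_L$ is $\mathsf{W}(B)$-stable and $[C]\in\Delta_{Y_1}\subseteq R_L$, we conclude $\beta=w^{-1}([C])\in R_L$.

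The reflection-group bookkeeping above is routine; the substance of the argument, and the main obstacle, is to translate Looijenga's combinatorial framework in \cite{Looij} into the present setup and to verify the two geometric inputs invoked above, namely that each root-basis element $b\in B$ is realized as a $-2$-curve on some deformation of $Y$ and that the $-2$-curves on an arbitrary deformation of $Y$ have classes lying in $R_L=\mathsf{W}(B)\cdot B$. Both facts are contained in \cite{Looij}, but extracting them cleanly requires care with Looijenga's conventions: the identifications of the cohomology lattices of the fibers along a deformation, with respect to which $\beta$ and $-\beta$ are interchanged (cf.\ Remark~\ref{monodromyinvar}(i)), the normalization of $\Pic^0D\cong\mathbb{G}_m$ and of the period map, and the precise definition of a root basis. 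It is exactly here that the hypothesis $r(D)\le 5$ is essential, since beyond that range the corresponding statements of \cite{Looij} are not established.
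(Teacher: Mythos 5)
Your proof of $R_L\subseteq R$ is essentially the paper's: one checks $B\subseteq R$ from Looijenga's construction of the root basis (realizing each $b\in B$ as the class of a $-2$-curve on a deformation and invoking Theorem~\ref{mainprop}), and then $R_L=\mathsf{W}(B)\cdot B\subseteq\mathsf{W}(R)\cdot R=R$ by Corollary~\ref{preserveamp}(ii). For the reverse inclusion you take a genuine detour. The paper argues directly: for $\alpha\in R$, Corollary~\ref{preserveamp}(ii) gives $r_\alpha(\overline{\mathcal{A}}_{\text{\rm{gen}}})=\overline{\mathcal{A}}_{\text{\rm{gen}}}$; Looijenga's Proposition I (4.7) then puts $r_\alpha$ in $\mathsf{W}(B)$; and a general fact about reflection groups (Bourbaki V \S3.2, Thm.\ 1(iv)) says every reflection in $\mathsf{W}(B)$ equals $r_\beta$ for some $\beta\in\mathsf{W}(B)\cdot B=R_L$, whence $\alpha=\pm\beta\in R_L$. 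You instead pass, via Theorem~\ref{mainprop}(i), to a deformation $Y_1$ on which $\beta$ becomes $\mathsf{W}(\Delta_{Y_1})$-conjugate to a $-2$-curve class, and then rest the whole argument on the assertion that $\Delta_{Y_1}\subseteq R_L$ for an \emph{arbitrary} deformation $Y_1$. That assertion is the one place where your argument is not self-supporting: it is a substantive geometric statement, not a formal consequence of the definition $R_L=\mathsf{W}(B)\cdot B$, and you do not pin down where in \cite{Looij} it is proved. The natural way to establish it is precisely the paper's three-step chain applied to $\alpha=[C]$: $r_{[C]}$ preserves $\overline{\mathcal{A}}_{\text{\rm{gen}}}$ by Lemma~\ref{reflect}, hence lies in $\mathsf{W}(B)$ by Looijenga I (4.7), hence $[C]=\pm\gamma$ for some $\gamma\in R_L$ by Bourbaki. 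Once you have that chain you can apply it to $\beta$ itself and dispense with Theorem~\ref{mainprop}(i) and the reflection-group bookkeeping ($\mathsf{W}(R_L)=\mathsf{W}(B)$, $\mathsf{W}(B)$-stability of $R_L$) entirely; so your route is workable but strictly longer, and as written it has one unverified input that the shorter route does not need.
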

\begin{proof} It is easy to see from the construction of \cite[I \S2]{Looij} that $B \subseteq R$. Hence $R_L \subseteq R$. Conversely, if $\alpha \in R$, then, by (ii) of Corollary~\ref{preserveamp}, $r_\alpha(\overline{\mathcal{A}}_{\text{\rm{gen}}}) = \overline{\mathcal{A}}_{\text{\rm{gen}}}$. It then follows from \cite[Proposition I (4.7)]{Looij} that $r_\alpha \in \mathsf{W}(B)$. By a general result in the theory of reflection groups \cite[V \S3.2, Thm.\ 1(iv)]{Bour}, $r_\alpha = r_\beta$ for some $\beta \in R_L$. Thus $\alpha =\pm \beta$, so that $\alpha \in R_L$. Hence $R\subseteq R_L$, and therefore $R_L = R$.
\end{proof} 

\begin{example}\label{irredex} Let $(Y,D)$ be the blowup of $\Pee^2$ at $N \geq 10$  points on an irreducible  nodal cubic curve. We let $h$ be the pullback of the class of a line on $\Pee^2$ and $e_1, \dots, e_N$ be the classes of the exceptional curves.

\smallskip
\noindent (i) Let $\alpha = -3h + \sum_{i=1}^{10}e_i$. Then $\alpha^2 = \alpha \cdot [K_Y] = -1$, so that $\alpha$ is a numerical exceptional curve. But there exists a nef and big divisor $H$ (for example $h$) such that $\alpha \cdot [H] < 0$, so that $\alpha$ is not effective. Hence, $\alpha \cdot x \leq 0$ for all $x\in  \overline{\mathcal{A}}(Y)=\overline{\mathcal{A}}_{\text{\rm{gen}}}$, since $W^\alpha$ does not pass through the interior of $\overline{\mathcal{A}}_{\text{\rm{gen}}}$. Note that $W^\alpha$ is never a face of $\overline{\mathcal{A}}_{\text{\rm{gen}}}$. For $N=10$, $W^{-\alpha}$ is   a face of $\overline{\mathcal{A}}_{\text{\rm{gen}}}$, but this is no longer the case  for $N \geq 11$. Thus the condition $\alpha \cdot [H] \geq  0$ for some $H$ such that $H\cdot D > 0$ is necessary for $\alpha$ to be effective.

More generally, let $f = 3h -\sum_{i=1}^9e_i$ and set $\alpha = kf + e_{10}$ (the case above corresponds to $k=-1$). As above, $\alpha$ is a numerical exceptional curve. For $k\leq -1$, $h\cdot \alpha < 0$, and hence $\alpha$ is not effective. For $k\geq 1$, $\alpha$ is effective but it is not the class of an exceptional curve: for all $x\in \overline{\mathcal{A}}_{\text{\rm{gen}}}$, $x\cdot f > 0$, and $x\cdot e_{10}\geq 0$. Hence $x\cdot \alpha > 0$   for all $x \in \overline{\mathcal{A}}_{\text{\rm{gen}}}$. Thus  $W^\alpha$ is not a face of $\overline{\mathcal{A}}_{\text{\rm{gen}}}$ and so $\alpha$ is not the class of an exceptional curve. 

\smallskip 
\noindent  (ii) With $\alpha$ any of the classes as above, suppose that $N \geq 11$ and $k\neq 0$ and set $\beta = \alpha -e_{11}$. Then $\beta^2 =-2$ and $\beta \cdot [K_Y] = 0$. However, 
$$r_\beta(e_{11}) = e_{11} + (e_{11}\cdot \beta)\beta  = \alpha.$$
Since $W^{e_{11}}$ is a face of $\overline{\mathcal{A}}_{\text{\rm{gen}}}$ and $W^\alpha$ is not a face of $\overline{\mathcal{A}}_{\text{\rm{gen}}}$, $r_\beta( \overline{\mathcal{A}}_{\text{\rm{gen}}})\neq \overline{\mathcal{A}}_{\text{\rm{gen}}}$. Hence $\beta$ does not satisfy any of the equivalent conditions of Theorem~\ref{mainprop}, so that $\beta \notin R$.
\end{example}

\begin{remark}\label{bestposs} In the situation of the example above, it is well-known that if $D$ is irreducible,  $N \leq 9$ (i.e.\ $D^2\geq 0$), and there are no $-2$-curves on $Y$, then every numerical exceptional curve is the class of an exceptional curve, so (i) above is best possible. A generalization is given in Proposition~\ref{nonneg} below. We shall show in Proposition~\ref{minusone} that the example in (ii) is best possible as well.
\end{remark}

The numerical exceptional curves given in (i) of Example~\ref{irredex} were known to Du Val. In fact, he showed that they are essentially the only numerical curves in case $Y$ is the blowup of $\Pee^2$ at $10$ points (\cite{duV2}, pp.\ 46--47):

\begin{proposition} Suppose that $(Y,D)$ is the blowup of $\Pee^2$ at $10$ points  lying on an irreducible cubic, that $Y$ is generic in the sense that there are no $-2$-curves on $Y$,  and that $\alpha$  is a numerical exceptional curve. Then there exists an exceptional curve $E$ on $Y$ and an integer $k$ such that $\alpha$ is the class of $k(D + E) + E$.
\end{proposition}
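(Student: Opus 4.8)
The plan is to translate the statement into a question about isotropic vectors in the lattice $\Lambda$, and then to identify those vectors geometrically with the classes $[D+E]$.

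\textbf{Lattice translation.} Since $K_Y^2=-1$, the sublattice $\Zee K_Y\subseteq H^2(Y;\Zee)$ is unimodular, so $H^2(Y;\Zee)=\Zee K_Y\oplus\Lambda$ orthogonally, with $\Lambda=K_Y^{\perp}$ even unimodular of signature $(1,9)$. If $\alpha$ is a numerical exceptional curve then $\alpha\cdot K_Y=-1=K_Y^2$ forces $\alpha=K_Y+\lambda$ with $\lambda:=\alpha+D\in\Lambda$, and $\alpha^2=-1$ is equivalent to $\lambda^2=0$. Conversely, for an exceptional curve $E$ one has $(D+E)\cdot K_Y=0$ and $(D+E)^2=D^2+2D\cdot E+E^2=-1+2-1=0$, so $[D+E]\in\Lambda$ is isotropic, and the class of $k(D+E)+E$ is $K_Y+(k+1)[D+E]$. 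Thus the proposition is equivalent to: every isotropic $\lambda\in\Lambda$ is an integral multiple of $[D+E]$ for some exceptional curve $E$ on $Y$.

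\textbf{Reductions.} If $\lambda=0$ take $k=-1$. Otherwise write $\lambda=m\mu$ with $\mu\in\Lambda$ primitive isotropic. Fix an ample class $A$; since $\mu\ne 0$ and $\mu^2=0$ we have $\mu\cdot A\ne 0$, and replacing $\alpha$ by $2K_Y-\alpha$ (again a numerical exceptional curve; this replaces $\lambda$ by $-\lambda$, hence only changes the eventual $k$ by $k\mapsto -k-2$) we may assume $\mu\cdot A>0$. Then Riemann--Roch gives $\chi(\scrO_Y(\mu))=\chi(\scrO_Y(\mu-D))=1$ (using $\mu\cdot K_Y=0$), while $h^2(\scrO_Y(\mu))=h^0(\scrO_Y(-\mu-D))=0$ and $h^2(\scrO_Y(\mu-D))=h^0(\scrO_Y(-\mu))=0$ because the relevant classes pair negatively with $A$; hence $\mu$ and $\mu-D$ are effective. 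So it suffices to show: for every primitive isotropic $\mu\in\Lambda$ with $\mu\cdot A>0$, the class $\mu-D$ is the class of an exceptional curve.

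\textbf{Main argument.} The crux is that $\mu$ is nef. Let $C$ be an irreducible curve with $\mu\cdot C<0$. Since $\mu$ lies in the closure $\overline{\mathcal{C}}{}^+$ of the positive cone and $\mu\cdot A>0$, a class of nonnegative square in $\overline{\mathcal{C}}{}^+$ pairs nonnegatively with $\mu$, so $C^2<0$; by the classification of irreducible negative curves and the absence of $-2$-curves on $Y$, either $C=D$ (impossible, as $\mu\cdot D=0$) or $C$ is an exceptional curve $E_*$. But $[D+E_*]$ is a nonzero isotropic class lying in $\overline{\mathcal{C}}{}^+$, so $\mu\cdot[D+E_*]\ge 0$, whence $\mu\cdot E_*=\mu\cdot[D+E_*]\ge 0$, a contradiction. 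Thus $\mu$ is nef. Now write the effective divisor $\mu-D$ as $\sum_j n_jG_j$ with the $G_j$ irreducible and distinct. Since $\mu\cdot(\mu-D)=\mu^2-\mu\cdot D=0$ and $\mu$ is nef, $\mu\cdot G_j=0$ for all $j$; the Hodge index theorem then gives $G_j^2\le 0$, and $G_j^2=0$ would force $[G_j]=c\mu$ with $c\in\Zee_{>0}$ (two orthogonal nonzero isotropic vectors being proportional), which upon substitution yields an identity $(1-N)\mu=D+(\text{effective})$ with $N\ge 1$, impossible since the left side pairs nonpositively and the right side positively with $A$. Hence every $G_j$ has negative self-intersection, so by genericity $\mu-D=qD+\sum_j n_jE_j$ with the $E_j$ distinct exceptional curves and $q\ge 0$. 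Now $(\mu-D)\cdot D=1$ gives $\sum_j n_j=q+1$, and $(\mu-D)^2=-1$ then simplifies to $\sum_{j<k} n_jn_k(E_j\cdot E_k+1)=0$; since $E_j\cdot E_k\ge 0$, there is at most one index $j$, so $\mu-D=qD+(q+1)E$ for a single exceptional curve $E$, i.e.\ $\mu=(q+1)[D+E]$. Primitivity of $\mu$ forces $q=0$, so $\mu=[D+E]$ and $\mu-D=[E]$; unwinding the reductions gives $\alpha=k(D+E)+E$ in cohomology, as claimed.

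\textbf{Expected main obstacle.} The step that looks hardest before the right idea is found is the nef-ness of $\mu$: a priori this seems to require controlling the whole (infinitely faceted) ample cone $\overline{\mathcal{A}}(Y)$, but it follows immediately from the observation that $[D+E_*]$ is an isotropic class in the closed positive cone for every exceptional curve $E_*$. The remaining delicate points are the Hodge-index step, where one must invoke primitivity of $\mu$ to exclude components of nonnegative self-intersection, and the sign/effectivity bookkeeping in the reduction to the case $\mu\cdot A>0$.
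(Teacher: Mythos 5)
Your proof is correct, but it takes a genuinely different route from the paper's. You share only the first step: the observation that $\lambda=\alpha-[K_Y]=\alpha+[D]$ is an isotropic vector of $\Lambda$ and that the claim reduces to showing every primitive isotropic class in $\overline{\mathcal{C}^+}\cap\Lambda$ equals $[D+E]$ for an exceptional curve $E$. From there the paper argues lattice-theoretically: it writes $\lambda=n\lambda_0$ with $\lambda_0$ primitive, uses the transitivity of $\Aut^+(\Lambda)\cong\Aut^+(U\oplus(-E_8))$ on primitive isotropic vectors in $\overline{\mathcal{C}^+}$ to move $\lambda_0$ to the standard class $f=3h-\sum_{i=1}^9e_i$, and then invokes Proposition~\ref{minusone} (any isometry fixing $[D]$ and $\mathcal{C}^+$ preserves $\overline{\mathcal{A}}_{\text{\rm{gen}}}=\overline{\mathcal{A}}(Y)$ when $K_Y^2=-1$) to conclude that the preimage of $e_{10}$ is the class of an exceptional curve. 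You instead argue directly on the surface: Riemann--Roch makes $\mu$ and $\mu-D$ effective, the neat observation that $[D+E_*]$ is isotropic and hence lies in $\overline{\mathcal{C}^+}$ for every exceptional curve $E_*$ forces $\mu$ to be nef, and Hodge index plus intersection bookkeeping on the components of $\mu-D$ pins down $\mu=(q+1)[D+E]$ with $q=0$ by primitivity. Your version is more elementary and self-contained --- it does not need Proposition~\ref{minusone} or the transitivity of $\Aut^+(\Lambda)$, and in effect it directly establishes part (iii) of the Corollary that follows the proposition (the bijection between exceptional curves and primitive isotropic vectors in $\overline{\mathcal{C}^+}\cap\Lambda$), from which the paper's statement falls out. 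What the paper's route buys is brevity given the machinery already in place, together with the structural information about $\Aut^+(\Lambda)$ exploited in the subsequent remark on Noether's inequality and Du Val's observation. One cosmetic point: replacing $\alpha$ by $2K_Y-\alpha$ flips $\lambda$, hence the integer $m$ in $\lambda=m\mu$, not $\mu$ itself; it is cleaner to simply choose the sign of the primitive generator $\mu$ so that $\mu\cdot A>0$. This does not affect the argument.
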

\begin{proof}   Suppose that $\alpha$ is a numerical exceptional curve on $Y$. Then, since $K_Y^2=-1$,  $\lambda = \alpha +[D] = \alpha -[K_Y]$ satisfies: $\lambda^2 = \lambda \cdot \alpha = \lambda \cdot [K_Y] = 0$. In particular, $\lambda \in \Lambda$. Conversely, given an isotropic vector $\lambda \in \Lambda$, if we set $\alpha = \lambda + [K_Y]$, then $\alpha$ is a numerical exceptional curve. 

Any isotropic vector $\lambda \in \Lambda$ can be uniquely written as $n\lambda_0$, where $n \in \Zee$ and $\lambda_0$ is primitive and lies in $\overline{\mathcal{C}^+}$.  Note that $H^2(Y; \Zee) = \Zee[K_Y] \oplus \Lambda$ and that $\Lambda = U \oplus (-E_8)$ (both sums orthogonal). An easy exercise shows that, if $\Aut^+(\Lambda)$ is the group of integral isometries $A$ of $\Lambda$ such that $A(\mathcal{C}^+\cap \Lambda) = \mathcal{C}^+\cap \Lambda$, i.e.\ $A$ has real spinor norm equal to $1$, then every $A \in \Aut^+(\Lambda)$ extends uniquely to an integral isometry of $H^2(Y; \Zee)$ fixing $[K_Y]$ and hence $[D]$, and moreover that $\Aut^+(\Lambda)$ acts transitively on the set of (nonzero) primitive isotropic vectors in $\overline{\mathcal{C}^+} \cap \Lambda$.  Hence there exists an $A \in \Aut^+(\Lambda)$ such that $A(\lambda_0) = f$, in the notation of Example~\ref{irredex}. If we continue to denote by $A$ the extension of $A$ to an isometry   of $H^2(Y; \Zee)$, then $A(\alpha) = nf + [K_Y] = (n-1) f + e_{10}$, since $f =  -[K_Y] + e_{10}$. It follows that $\alpha = (n-1)\lambda_0 + A^{-1}(e_{10})$. Using Proposition~\ref{minusone} below, $A^{-1}$ preserves the walls of the ample cone of $Y$, and thus $A^{-1}(e_{10}) =e$ is the class of an exceptional curve $E$, and $\lambda_0 =A^{-1}(f) = A^{-1}([D] + e_{10}) = [D] + E$. Hence, setting $k=n-1$,  $\alpha$ is the class of $k(D + E) + E$ as claimed.
\end{proof}

The proof above shows the following:

\begin{corollary} Let $(Y,D)$ be the blowup of $\Pee^2$ at $10$ points  lying on an irreducible cubic  and such that there are no $-2$-curves on $Y$,   let $\alpha$ be a numerical exceptional curve on $Y$, and let $\lambda = \alpha -[K_Y]$. Then:
\begin{enumerate}
\item[\rm{(i)}] $\alpha$ is effective $\iff$ $\lambda \in  (\overline{\mathcal{C}^+}-\{0\}) \cap \Lambda$.
\item[\rm{(ii)}] $\alpha$ is not effective $\iff$ $\lambda \in (-\overline{\mathcal{C}^+}) \cap \Lambda$.
\item[\rm{(iii)}] $\alpha$ is the class of an exceptional curve $\iff$ $\lambda$ is a  primitive isotropic vector in $\overline{\mathcal{C}^+}  \cap \Lambda$. Thus there is a bijection from the set of exceptional curves on $Y$ to the set of  primitive isotropic vectors in $\overline{\mathcal{C}^+}  \cap \Lambda$. \qed
\end{enumerate}
\end{corollary}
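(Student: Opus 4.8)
The plan is to deduce all the assertions directly from the proof of the preceding Proposition, by transporting the properties ``is effective'' and ``is the class of an exceptional curve'' along the auxiliary isometry constructed there. Recall the dictionary from that proof: since $K_Y^2 = -1$, the map $\alpha \mapsto \lambda := \alpha - [K_Y]$ is a bijection from the numerical exceptional curves on $Y$ onto the isotropic vectors of $\Lambda$, with $\lambda = 0$ corresponding to $\alpha = [K_Y]$. Writing a nonzero such $\lambda$ uniquely as $n\lambda_0$ with $n \in \Zee$ and $\lambda_0$ primitive and lying in $\overline{\mathcal{C}^+}$, one has: $\lambda \in \overline{\mathcal{C}^+} \setminus \{0\}$ exactly when $n \geq 1$; $\lambda \in -\overline{\mathcal{C}^+}$ exactly when $n \leq 0$ (the value $n = 0$ being $\lambda = 0$); and $\lambda$ is a primitive isotropic vector of $\overline{\mathcal{C}^+}$ exactly when $n = 1$. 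Choosing $A \in \Aut^+(\Lambda)$ with $A(\lambda_0) = f$ (possible by the transitivity of $\Aut^+(\Lambda)$ on primitive isotropic vectors of $\overline{\mathcal{C}^+} \cap \Lambda$) and extending $A$ to an integral isometry of $H^2(Y;\Zee)$ fixing $[K_Y]$, one obtains as in the Proposition that $A(\alpha) = nf + [K_Y] = (n-1)f + e_{10}$.

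I would next transport the two properties across $A$. By Proposition~\ref{minusone} (below), $A$ and $A^{-1}$ preserve the ample cone of $Y$; since there are no $-2$-curves on $Y$ we have $\overline{\mathcal{A}}(Y) = \overline{\mathcal{A}}_{\text{\rm{gen}}}$, so $A$ preserves $\overline{\mathcal{A}}_{\text{\rm{gen}}}$ together with its faces, and, as $A$ also fixes $[D] = -[K_Y]$, Lemma~\ref{describeA}(i) shows that $A$ permutes the set of effective numerical exceptional curves and the set of classes of exceptional curves. Hence $\alpha$ is effective (resp.\ is the class of an exceptional curve) if and only if $(n-1)f + e_{10}$ is. Now apply the computations of Example~\ref{irredex}(i) with $N = 10$: since $h\cdot(kf+e_{10}) = 3k$, the class $kf+e_{10}$ is not effective for $k \leq -1$ and is effective for $k \geq 0$, while $W^{kf+e_{10}}$ is a face of $\overline{\mathcal{A}}_{\text{\rm{gen}}}$ only when $k=0$. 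Therefore $(n-1)f + e_{10}$ is effective iff $n - 1 \geq 0$, is not effective iff $n - 1 \leq -1$, and is the class of an exceptional curve iff $n-1 = 0$. Matching these against the description of $n$ in the first paragraph yields (i), (ii) and (iii).

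For the bijection asserted in (iii), I would verify that $E \mapsto [E] - [K_Y]$ is the map in question: its image lies among the primitive isotropic vectors of $\overline{\mathcal{C}^+}\cap\Lambda$ by (iii) applied to $\alpha = [E]$; it is injective because an exceptional curve $E$ is the unique effective member of $|L_{[E]}|$ (any effective $E' \in |L_{[E]}|$ satisfies $E \cdot E' = E^2 = -1 < 0$, forcing $E$ to occur in $E'$ with the rest empty), so distinct exceptional curves have distinct classes; and it is surjective because, for a primitive isotropic $\lambda_0 \in \overline{\mathcal{C}^+}\cap\Lambda$, the class $\alpha := \lambda_0 + [K_Y]$ is a numerical exceptional curve with $\alpha - [K_Y]$ a primitive isotropic vector of $\overline{\mathcal{C}^+}$, hence by (iii) equals $[E]$ for an exceptional curve $E$, and then $[E] - [K_Y] = \lambda_0$. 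The one step requiring genuine care — the remainder being bookkeeping — is the invariance in the second paragraph: one must be sure that the abstract lattice isometry $A$ really respects effectiveness and the property of being an exceptional curve, which is precisely where Proposition~\ref{minusone} is used, and one must keep straight the sign of $n$ and which half of the light cone contains $\lambda$.
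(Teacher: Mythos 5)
Your proposal is correct and follows essentially the same route as the paper: the corollary is stated there as a direct consequence of the proof of the preceding proposition, which uses exactly your ingredients — the bijection $\alpha \mapsto \lambda = \alpha - [K_Y]$, the decomposition $\lambda = n\lambda_0$, the transitivity of $\Aut^+(\Lambda)$ on primitive isotropic vectors of $\overline{\mathcal{C}^+}\cap\Lambda$, Proposition~\ref{minusone} to transport effectivity and exceptionality along $A$, and the computations of Example~\ref{irredex}(i) for the classes $kf+e_{10}$. The only cosmetic difference is that you push $\alpha$ forward to the standard classes $(n-1)f+e_{10}$ whereas the paper pulls back to write $\alpha = [k(D+E)+E]$; the content is identical.
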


\begin{remark} In the above situation, let  $\mathsf{W}$ be the group generated by the reflections in the classes $e_1-e_2, \dots, e_9-e_{10}, h-e_2-e_2-e_3$, which are easily seen to be Looijenga roots. A classical argument (usually called Noether's inequality) shows that, if $\lambda_0$ is a primitive integral isotropic vector  in $\Lambda$ lying in $\overline{\mathcal{C}^+}$, then  there exists $w\in \mathsf{W}$ such that $w(\lambda_0) = f = 3h -\sum_{i=1}^9 e_i$, in the notation of Example~\ref{irredex}.  Thus, $\mathsf{W}$ acts transitively on the set of such vectors. Using  standard results about the affine Weyl group of $E_8$, it is then easy to see that $\mathsf{W} = \Aut^+(\Lambda)$.  This was already noted by Du Val in \cite{duV2}.
\end{remark}

\section{Roots and the ample cone}

By Corollary~\ref{preserveamp}, if $f\colon H^2(Y; \Zee) \to H^2(Y; \Zee)$ is an integral isometry preserving the classes $[D_i]$ such that $f(\overline{\mathcal{A}}_{\text{\rm{gen}}}) = \overline{\mathcal{A}}_{\text{\rm{gen}}}$, then $f(R) = R$. In this section, we find criteria for when the converse holds. We begin with the following:

\begin{lemma}\label{containopen} Let $f\colon H^2(Y; \Zee) \to H^2(Y; \Zee)$ be an integral isometry preserving $\mathcal{C}^+$ and the classes $[D_i]$. If $f(\overline{\mathcal{A}}_{\text{\rm{gen}}}) \cap \overline{\mathcal{A}}_{\text{\rm{gen}}}$ contains an open set, then $f(\overline{\mathcal{A}}_{\text{\rm{gen}}}) = \overline{\mathcal{A}}_{\text{\rm{gen}}}$.
\end{lemma}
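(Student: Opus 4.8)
The plan is to realize $\overline{\mathcal{A}}_{\text{\rm{gen}}}$ as the closure, inside $\mathcal{C}^+$, of a single connected component (a ``chamber'') of the complement of a locally finite hyperplane arrangement which is preserved by $f$. Granting this, the lemma is immediate: since $f$ is a linear isometry preserving $\mathcal{C}^+$, it is a homeomorphism of $\mathcal{C}^+$ permuting the walls of the arrangement, hence permuting the chambers, so $f(\operatorname{int}\overline{\mathcal{A}}_{\text{\rm{gen}}})=\operatorname{int}\bigl(f(\overline{\mathcal{A}}_{\text{\rm{gen}}})\bigr)$ is again a chamber. If $f(\overline{\mathcal{A}}_{\text{\rm{gen}}})\cap\overline{\mathcal{A}}_{\text{\rm{gen}}}$ contains an open set $U$, then $U\subseteq\operatorname{int}\bigl(f(\overline{\mathcal{A}}_{\text{\rm{gen}}})\bigr)\cap\operatorname{int}\overline{\mathcal{A}}_{\text{\rm{gen}}}$, so these two chambers have a common point and therefore coincide; taking closures in $\mathcal{C}^+$ (each of these cones being a closed convex set with nonempty interior, hence the closure there of its interior) yields $f(\overline{\mathcal{A}}_{\text{\rm{gen}}})=\overline{\mathcal{A}}_{\text{\rm{gen}}}$.

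For the arrangement, let $\mathcal{N}$ be the set of all numerical exceptional curves, i.e.\ classes $\alpha\in H^2(Y;\Zee)$ with $\alpha^2=\alpha\cdot[K_Y]=-1$, and set $\mathcal{W}=\{W^\alpha:\alpha\in\mathcal{N}\}\cup\{W^{[D_i]}:1\le i\le r\}$. Every $\alpha\in\mathcal{N}$ has $\alpha^2=-1$, so the subfamily indexed by $\mathcal{N}$ is locally finite on $\mathcal{C}^+$ by the standard result quoted after the definition of $W^\alpha$ (cf.\ \cite{FriedmanMorgan}, II (1.8)), and adjoining the finitely many walls $W^{[D_i]}$ preserves local finiteness. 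Since $f$ fixes each $[D_i]$ it fixes $[D]=-[K_Y]$, so $f$ and $f^{-1}$ preserve the conditions $\alpha^2=\alpha\cdot[K_Y]=-1$ and hence carry $\mathcal{N}$ to itself; thus $f$ permutes the members of $\mathcal{W}$, and therefore the chambers of $\mathcal{C}^+\setminus\bigcup\mathcal{W}$. It is important to take all of $\mathcal{N}$ here, not merely the classes of the faces of $\overline{\mathcal{A}}_{\text{\rm{gen}}}$, since a priori $f$ carries a face of $\overline{\mathcal{A}}_{\text{\rm{gen}}}$ only to a face of $f(\overline{\mathcal{A}}_{\text{\rm{gen}}})$.

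The heart of the matter is to check that $\operatorname{int}\overline{\mathcal{A}}_{\text{\rm{gen}}}$ is a full chamber of $\mathcal{W}$. By Lemma~\ref{describeA}(i), $\overline{\mathcal{A}}_{\text{\rm{gen}}}=\{x\in\mathcal{C}^+: x\cdot[D_i]\ge0\ \text{for all }i,\ \text{and}\ x\cdot\alpha\ge0\ \text{for all }\alpha\in\mathcal{E}\}$, where $\mathcal{E}\subseteq\mathcal{N}$ is the set of effective numerical exceptional curves. Pass to a generic deformation $Y_t$, so that $\operatorname{int}\overline{\mathcal{A}}_{\text{\rm{gen}}}$ is the ample cone of the surface $Y_t$, which has no $-2$-curves. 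If $x$ is ample on $Y_t$ and $\alpha\in\mathcal{N}$, then $\chi(L_\alpha)=1$ by Riemann-Roch, and by Serre duality $h^0(L_\alpha)+h^0(L_{-\alpha-[D]})\ge 1$, so at least one of $\alpha$, $-\alpha-[D]$ is the class of an effective divisor on $Y_t$; in the former case $\alpha=[G]$ with $G$ effective and $G\neq0$ (as $\alpha^2=-1$), whence $x\cdot\alpha>0$, while in the latter case $x\cdot(-\alpha-[D])\ge0$ together with $x\cdot[D]>0$ ($D$ being a nonzero effective divisor) forces $x\cdot\alpha<0$; also $x\cdot[D_i]>0$ for all $i$. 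Hence $\operatorname{int}\overline{\mathcal{A}}_{\text{\rm{gen}}}$ meets no wall of $\mathcal{W}$ and so lies in a single chamber $\Omega$. Conversely, if $z\in\Omega\setminus\operatorname{int}\overline{\mathcal{A}}_{\text{\rm{gen}}}$, then a path in $\Omega$ joining an interior point of $\overline{\mathcal{A}}_{\text{\rm{gen}}}$ to $z$ would first leave $\operatorname{int}\overline{\mathcal{A}}_{\text{\rm{gen}}}$ at a point $w\in\partial\overline{\mathcal{A}}_{\text{\rm{gen}}}$ with $w\cdot[D_i]>0$ for all $i$; by local finiteness $\overline{\mathcal{A}}_{\text{\rm{gen}}}$ coincides near $w$ with a finite intersection of half-spaces $\{x\cdot[D_i]\ge0\}$ and $\{x\cdot\alpha\ge0\}$ ($\alpha\in\mathcal{E}$), so $w$ lies on one of the corresponding walls; that wall is no $W^{[D_i]}$, so $w\in W^\alpha$ for some $\alpha\in\mathcal{E}\subseteq\mathcal{N}$, contradicting $w\in\Omega$. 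Thus $\Omega=\operatorname{int}\overline{\mathcal{A}}_{\text{\rm{gen}}}$, and the lemma follows as in the first paragraph.

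The step I expect to be the main obstacle is exactly this last one: showing that $\overline{\mathcal{A}}_{\text{\rm{gen}}}$ is the closure of a \emph{full} chamber of the enlarged, $f$-invariant arrangement $\mathcal{W}$, rather than of a set merely contained in a chamber. This needs both the Riemann-Roch computation, to rule out any meeting of $\operatorname{int}\overline{\mathcal{A}}_{\text{\rm{gen}}}$ with a wall $W^\alpha$ for $\alpha\in\mathcal{N}$ (not just $\alpha\in\mathcal{E}$), and the local polyhedrality of $\overline{\mathcal{A}}_{\text{\rm{gen}}}$ supplied by local finiteness together with Lemma~\ref{describeA}, to see that the portion of $\partial\overline{\mathcal{A}}_{\text{\rm{gen}}}$ lying in $\{x:x\cdot[D_i]>0\ \text{for all }i\}$ is covered by walls of $\mathcal{W}$.
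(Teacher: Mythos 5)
Your argument is correct. The one point that genuinely needs care --- that $\operatorname{int}\overline{\mathcal{A}}_{\text{\rm{gen}}}$ is a \emph{full} chamber of the locally finite arrangement of walls $W^\alpha$, $\alpha$ ranging over all numerical exceptional classes together with the $[D_i]$ --- you handle properly, both the non-meeting of walls (via the Riemann--Roch/Serre duality dichotomy: for $\alpha^2=\alpha\cdot[K_Y]=-1$ either $\alpha$ or $-\alpha-[D]$ is effective, giving a strict sign against any ample class) and the converse inclusion via local polyhedrality of the boundary. Your route differs from the paper's, which is much terser: there one picks a class $x$ in the open overlap that is ample on a generic deformation and invokes the effectivity criterion of Lemma~\ref{numeric}(i) ($\alpha$ effective $\iff$ $\alpha\cdot x\geq 0$) to conclude directly that $f$ permutes the set of \emph{effective} numerical exceptional curves, so that $f(\overline{\mathcal{A}}_{\text{\rm{gen}}})$ and $\overline{\mathcal{A}}_{\text{\rm{gen}}}$ are cut out by the identical system of inequalities. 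That argument is shorter because it never needs the chamber structure or local finiteness; yours is more robust in that it only uses that $f$ preserves the $f$-invariant wall arrangement and that two chambers sharing a point coincide, which is the standard mechanism for statements of this kind (cf.\ \cite{FriedmanMorgan}). Both proofs ultimately rest on the same Riemann--Roch input, so the difference is one of packaging rather than substance; either is acceptable.
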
 
\begin{proof} Choosing $x\in f(\overline{\mathcal{A}}_{\text{\rm{gen}}}) \cap \overline{\mathcal{A}}_{\text{\rm{gen}}}$ corresponding to an ample divisor, it is easy to see that $f(\overline{\mathcal{A}}_{\text{\rm{gen}}})$ and $\overline{\mathcal{A}}_{\text{\rm{gen}}}$ have the same set of walls, hence are equal.
\end{proof}

Next we deal with the case where one component of $D$ is a smooth rational curve of self-intersection $-1$. 

\begin{lemma}\label{excepcomp} Suppose that $D$ is reducible and that $D_r^2=-1$. Let $(\overline{Y}, \overline{D})$ be the anticanonical pair obtained by contracting $D_r$. Then any isometry $f$ of $H^2(Y;\Zee)$ preserving the classes $[D_i]$, $1\leq i\leq r$, defines an isometry $\bar{f}$ of $H^2(\overline{Y};\Zee)$ preserving the classes $[\overline{D}_i]$, $1\leq i\leq r-1$, and conversely. Moreover, $f$ preserves $\overline{\mathcal{A}}_{\text{\rm{gen}}}(Y)$ $\iff$ $\bar{f}$ preserves   $\overline{\mathcal{A}}_{\text{\rm{gen}}}(\overline{Y})$, and  $R_Y$ is naturally identified with the roots  $R_{\overline{Y}}$ of $\overline{Y}$.
\end{lemma}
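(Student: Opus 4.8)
## Proof Proposal for Lemma~\ref{excepcomp}

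The plan is to exploit the standard blow-down/blow-up correspondence at the level of cohomology and to track how it interacts with the three structures in play: the classes $[D_i]$, the generic ample cone, and the set of roots. Let $\pi\colon Y \to \overline{Y}$ be the contraction of $D_r$, so that $\pi^*\colon H^2(\overline{Y};\Zee) \hookrightarrow H^2(Y;\Zee)$ is an isometric embedding whose image is $[D_r]^\perp$, and $H^2(Y;\Zee) = \pi^*H^2(\overline{Y};\Zee) \oplus \Zee[D_r]$ orthogonally. First I would observe that since $D$ is a cycle, $D_r$ meets exactly $D_{r-1}$ and $D_1$, each transversally in one point; hence $\pi(D_i) = \overline{D}_i$ for $i \neq r$, with $[D_i] = \pi^*[\overline{D}_i] - [D_r]$ for $i = 1, r-1$ and $[D_i] = \pi^*[\overline{D}_i]$ otherwise (here $\overline{D} = \overline{D}_1 + \dots + \overline{D}_{r-1}$ is an anticanonical cycle on $\overline{Y}$ of length $r-1$, or an irreducible nodal curve if $r = 2$). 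An isometry $f$ of $H^2(Y;\Zee)$ fixing all $[D_i]$ in particular fixes $[D_1] - [D_{r-1}]$ and $[D_1] + \dots + [D_r] = -[K_Y]$; a short computation shows $f$ must then fix $[D_r]$ itself (for $r \geq 3$ this is immediate since $[D_r]$ is an integral combination of the $[D_i]$; for $r = 2$ one uses that $[D_1]$ and $[D_2]$ already span a rank-two sublattice and $f$ fixes both). Therefore $f$ preserves $[D_r]^\perp = \pi^*H^2(\overline{Y};\Zee)$ and descends to an isometry $\bar f$ of $H^2(\overline{Y};\Zee)$ via $\pi^* \bar f = f \pi^*$; since $f$ fixes each $[D_i] = \pi^*[\overline{D}_i] \pm [D_r]$ and fixes $[D_r]$, $\bar f$ fixes each $[\overline{D}_i]$. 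The construction is manifestly reversible: given $\bar f$ fixing the $[\overline{D}_i]$, extend by $\pi^* \bar f \oplus \Id_{\Zee[D_r]}$; this fixes $[D_r]$ and each $[D_i]$, giving the claimed bijection.

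Next I would handle the generic ample cones. By Lemma~\ref{describeA}(i), $\overline{\mathcal{A}}_{\text{\rm{gen}}}(Y)$ is cut out in $\mathcal{C}^+(Y)$ by the conditions $x \cdot [D_i] \geq 0$ and $x \cdot \alpha \geq 0$ for all effective numerical exceptional curves $\alpha$, and similarly on $\overline{Y}$. The exceptional curve $D_r$ itself is an effective numerical exceptional curve on $Y$, and $x \cdot [D_r] \geq 0$ together with the requirement $x \in \mathcal{C}^+(Y)$ restricts $\overline{\mathcal{A}}_{\text{\rm{gen}}}(Y)$ essentially to the half-space $x \cdot [D_r] \geq 0$; the key classical fact is that the effective numerical exceptional curves on $Y$ are exactly $[D_r]$ together with the classes $\pi^*\gamma$ and $\pi^*\gamma' + [D_r]$ where $\gamma, \gamma'$ range over effective numerical exceptional curves on $\overline{Y}$ (this is the standard correspondence of exceptional/$(-1)$-classes under a single blow-up, which holds numerically by Riemann–Roch once one checks the square and canonical-pairing conditions). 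One then checks directly that $\pi^*$ maps $\overline{\mathcal{A}}_{\text{\rm{gen}}}(\overline{Y})$ isomorphically onto the face $\overline{\mathcal{A}}_{\text{\rm{gen}}}(Y) \cap [D_r]^\perp$, or more usefully that a class $x \in [D_r]^\perp \cap \mathcal{C}^+(Y)$ lies in $\overline{\mathcal{A}}_{\text{\rm{gen}}}(Y)$ iff $(\pi^*)^{-1}x \in \overline{\mathcal{A}}_{\text{\rm{gen}}}(\overline{Y})$. Since $f$ preserves $[D_r]^\perp$ and $\overline{\mathcal{A}}_{\text{\rm{gen}}}(Y)$ is determined by its intersection with this hyperplane together with the half-space condition $x\cdot[D_r]\geq 0$ (which $f$ preserves, fixing $[D_r]$), it follows that $f$ preserves $\overline{\mathcal{A}}_{\text{\rm{gen}}}(Y)$ iff $\bar f$ preserves $\overline{\mathcal{A}}_{\text{\rm{gen}}}(\overline{Y})$. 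I would phrase this last step using Lemma~\ref{containopen}: $f(\overline{\mathcal{A}}_{\text{\rm{gen}}}(Y))$ and $\overline{\mathcal{A}}_{\text{\rm{gen}}}(Y)$ share an interior point iff the corresponding statement holds downstairs for $\bar f$, and then both upstairs and downstairs one upgrades to equality.

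Finally, the identification of root sets: $R_Y$ consists of the $\beta \in \Lambda(Y,D)$ with $\beta^2 = -2$ satisfying condition (iii) of Theorem~\ref{mainprop}, namely $r_\beta(\overline{\mathcal{A}}_{\text{\rm{gen}}}(Y)) = \overline{\mathcal{A}}_{\text{\rm{gen}}}(Y)$. I would first note that $\Lambda(Y,D) = \{[D_i]\}^\perp \subseteq [D_r]^\perp = \pi^*H^2(\overline{Y};\Zee)$, and under $\pi^*$ it is carried isometrically onto $\{[\overline{D}_i]\}^\perp = \Lambda(\overline{Y},\overline{D})$, since $\pi^*[\overline{D}_i]$ differs from $[D_i]$ only by multiples of $[D_r]$ which lie in neither lattice's defining conditions once we are already inside $[D_r]^\perp$. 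So $\beta \mapsto (\pi^*)^{-1}\beta$ is a bijection $\Lambda(Y,D) \to \Lambda(\overline{Y},\overline{D})$ preserving the $(-2)$-condition. For such $\beta$, reflection $r_\beta$ on $H^2(Y;\Ar)$ acts trivially on $\Zee[D_r]$ (as $\beta \perp [D_r]$) and as $\pi^* r_{(\pi^*)^{-1}\beta} (\pi^*)^{-1}$ on $[D_r]^\perp$; combining this with the previous paragraph's description of $\overline{\mathcal{A}}_{\text{\rm{gen}}}(Y)$ in terms of its hyperplane slice, $r_\beta$ preserves $\overline{\mathcal{A}}_{\text{\rm{gen}}}(Y)$ iff $r_{(\pi^*)^{-1}\beta}$ preserves $\overline{\mathcal{A}}_{\text{\rm{gen}}}(\overline{Y})$, which is exactly the statement $\beta \in R_Y \iff (\pi^*)^{-1}\beta \in R_{\overline{Y}}$. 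The main obstacle I anticipate is the second paragraph: making precise and correct the claim that $\overline{\mathcal{A}}_{\text{\rm{gen}}}(Y)$ is recovered from its slice $\overline{\mathcal{A}}_{\text{\rm{gen}}}(Y) \cap [D_r]^\perp$ together with the condition $x \cdot [D_r] \geq 0$ — equivalently, that $[D_r]$ is the unique face of $\overline{\mathcal{A}}_{\text{\rm{gen}}}(Y)$ not coming from a face of $\overline{\mathcal{A}}_{\text{\rm{gen}}}(\overline{Y})$, and that no effective numerical exceptional curve on $Y$ has negative intersection with some class of $[D_r]^\perp \cap \overline{\mathcal{A}}_{\text{\rm{gen}}}(\overline{Y})$-slice beyond what the classification above predicts. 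This is where the explicit correspondence of numerical exceptional curves under a blow-up, and Lemma~\ref{remarkafternumeric} (which governs exactly the case where a component is a $(-1)$-curve), do the real work.
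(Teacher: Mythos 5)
Your overall architecture is the same as the paper's: decompose $H^2(Y;\Zee)=\pi^*H^2(\overline{Y};\Zee)\oplus\Zee[D_r]$, identify $\Lambda(Y,D)$ with $\Lambda(\overline{Y},\overline{D})$, relate the two generic ample cones, and deduce the statement about roots from Theorem~\ref{mainprop}(iii) applied to $f=r_\beta$. The first and last paragraphs are fine (though note that $f$ fixes $[D_r]$ by hypothesis, so no computation is needed there). The problem is the middle step, and it is exactly the one you flag as the anticipated obstacle.

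Two claims there are not correct as stated. First, the ``key classical fact'': the classes $\pi^*\gamma'+[D_r]$ with $\gamma'$ a numerical exceptional curve on $\overline{Y}$ satisfy $(\pi^*\gamma'+[D_r])^2=-2$, so they are not numerical exceptional curves at all; more seriously, writing $\alpha=\pi^*\gamma+k[D_r]$, the conditions $\alpha^2=\alpha\cdot[K_Y]=-1$ force $\gamma^2=k^2-1$, $\gamma\cdot[K_{\overline{Y}}]=k-1$, which admits solutions for every $k$ (e.g.\ $[D]+2[D_r]=\pi^*(-[K_{\overline{Y}}])+[D_r]$ is an effective numerical exceptional curve with $k=1$), so there is no clean dichotomy parametrized by numerical exceptional curves downstairs. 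Second, a convex cone is not in general recovered from one of its facets together with the supporting half-space (two cones can share a facet), so the assertion that $\overline{\mathcal{A}}_{\text{\rm{gen}}}(Y)$ is ``determined by'' its slice with $[D_r]^\perp$ plus $x\cdot[D_r]\geq 0$ needs proof and is the actual content of the lemma. The paper avoids both issues: for the forward implication it only needs $\overline{\mathcal{A}}_{\text{\rm{gen}}}(Y)\cap[D_r]^\perp=\overline{\mathcal{A}}_{\text{\rm{gen}}}(\overline{Y})$, and for the converse it observes that if $\overline{H}$ is ample on $\overline{Y}$ then $N\pi^*\overline{H}-D_r$ is ample on $Y$ for $N\gg 0$; applying this to $\bar f(\overline{H})$ shows $f(\overline{\mathcal{A}}_{\text{\rm{gen}}}(Y))\cap\overline{\mathcal{A}}_{\text{\rm{gen}}}(Y)$ contains an open set, and Lemma~\ref{containopen} then upgrades this to equality of the cones. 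You invoke Lemma~\ref{containopen} at the end, but without the ampleness of $N\pi^*\overline{H}-D_r$ (or some substitute) you have not actually produced the required common open set, so the converse direction remains open in your write-up.
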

\begin{proof} The first statement is clear. Identifying $H^2(\overline{Y}, \Zee)$ with $[D_r]^\perp \subseteq H^2(Y; \Zee)$, it is clear that $\overline{\mathcal{A}}_{\text{\rm{gen}}}(Y) \cap [D_r]^\perp =\overline{\mathcal{A}}_{\text{\rm{gen}}}(\overline{Y})$. Hence, if $f$ preserves $\overline{\mathcal{A}}_{\text{\rm{gen}}}(Y)$, then  $\bar{f}$ preserves $\overline{\mathcal{A}}_{\text{\rm{gen}}}(\overline{Y})$. Since a divisor $\overline{H}$ on $\overline{Y}$ is ample $\iff$ $N \overline{H} - D_r$ is ample for all $N \gg 0$, it follows that, if  $\bar{f}$ preserves $\overline{\mathcal{A}}_{\text{\rm{gen}}}(\overline{Y})$, then $f(\overline{\mathcal{A}}_{\text{\rm{gen}}}(Y)) \cap \overline{\mathcal{A}}_{\text{\rm{gen}}}(Y)$ contains an open set, and hence $f(\overline{\mathcal{A}}_{\text{\rm{gen}}}(Y)) = \overline{\mathcal{A}}_{\text{\rm{gen}}}(Y)$ by Lemma~\ref{containopen}.  It follows from this and from Theorem~\ref{mainprop} that $R_Y$ is naturally identified with   $R_{\overline{Y}}$ (or directly from the definition by noting that there is a bijection from the set of deformations of $(Y,D)$ to those of $(\overline{Y}, \overline{D})$). 
\end{proof}

Henceforth, then, we shall always assume if need be that  no component of $D$ is a smooth rational curve of self-intersection $-1$.

We turn  to the   straightforward case where $(Y,D)$ is not negative definite:

\begin{proposition}\label{nonneg} Suppose that $(Y,D)$ and $(Y', D')$ are two anticanonical pairs  with $r(D) = r(D')$ and  such that  neither pair is  negative definite.  If $f\colon H^2(Y; \Zee) \to H^2(Y'; \Zee)$ is an integral isometry with  $f([D_i]) = [D_i']$ for all $i$, then $f(\overline{\mathcal{A}}_{\text{\rm{gen}}}(Y)) = \overline{\mathcal{A}}_{\text{\rm{gen}}}(Y')$ and hence $f(R_Y) = R_{Y'}$. Moreover, 
$$R_Y = \{\beta \in \Lambda(Y,D): \beta^2 =-2\}.$$
\end{proposition}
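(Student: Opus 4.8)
The plan is to base both assertions on the Perron--Frobenius eigenvector of the connected matrix $(D_i\cdot D_j)$. After reducing via Lemma~\ref{excepcomp} to the case where no $D_i$ is a $(-1)$-curve, I would put $v_0=\sum_iu_i[D_i]$, where $(u_i)$ is a positive eigenvector for the largest eigenvalue $\mu=\mu(Y,D)$ of $(D_i\cdot D_j)$; here $\mu\ge 0$ exactly because $(Y,D)$ is not negative definite. The basic properties I would use are: $v_0$ is orthogonal to $\Lambda$; $v_0\cdot C=\sum_iu_i(D_i\cdot C)\ge 0$ for every irreducible curve $C\ne D_j$ on $Y$ or on any deformation of $Y$, so $v_0$ is nef, with $v_0^2=\mu|u|^2$ and $v_0\cdot D=\mu\sum_iu_i$; if $\mu>0$ then $\Lambda\subseteq v_0^\perp$ is negative definite; and $\mu=0$ forces $D^2=0$, $v_0$ a positive multiple of $-K_Y$, and $\Lambda$ negative semidefinite with radical a rank-$\le 1$ totally isotropic lattice containing $-K_Y$ (this is the case of a cycle of $(-2)$-curves).

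To prove $R_Y=\{\beta\in\Lambda:\beta^2=-2\}$, only the inclusion ``$\supseteq$'' is at issue, and by Theorem~\ref{mainprop} it suffices, for a fixed such $\beta$, to produce a point of $W^\beta$ in the interior of $\overline{\mathcal{A}}_{\mathrm{gen}}$. If $\mu>0$, I would take $Y$ generic (no $-2$-curves) and check that $v_0$ is then ample: if $v_0\cdot C=0$ then $C$ is disjoint from $D$, so $[C]\in\Lambda$ with $[C]^2\ge -2$ by adjunction, which (as $\Lambda$ is negative definite and parity excludes $[C]^2=-1$) forces $[C]^2=-2$, i.e.\ $C$ a $-2$-curve --- a contradiction; since $v_0\perp\beta$, this $v_0$ works. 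If $\mu=0$, $v_0$ is only nef and I would perturb: since $\beta^2=-2\ne 0$ while every nonnegative combination of the $[D_i]$ pairs to $0$ with $\beta$, there is a class $w$ with $w\cdot\beta=0$ and $w\cdot D_i>0$ for all $i$, and $v_0+\varepsilon w$ lies in $W^\beta\cap(\mathrm{int}\,\overline{\mathcal{A}}_{\mathrm{gen}})$ for small $\varepsilon>0$; the only positivity that is not automatic is against curves disjoint from $D$, but their classes lie in the radical of $\Lambda$, hence are positive multiples of $-K_Y=\sum_i[D_i]$, and so pair positively with $w$.

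For the isometry statement I would first note that $f$ automatically preserves $\mathcal{C}^+$ (pair $f$ of an ample class against $v_0'=f(v_0)=\sum_iu_i[D_i']$, which is nef on $Y'$), sends $[K_Y]$ to $[K_{Y'}]$, restricts to an isometry $\Lambda(Y,D)\cong\Lambda(Y',D')$, and carries numerical exceptional curves to numerical exceptional curves. The crucial point is that, when $(Y,D)$ is not negative definite, the set of \emph{effective} numerical exceptional curves is cut out lattice-theoretically, hence is preserved by $f$. If $\mu>0$, this comes from Lemma~\ref{numeric}(i) applied with the nef $\Ar$-divisor $H=v_0$ (legitimate because that lemma uses only nefness and $H\cdot D>0$, and $v_0\cdot D=\mu\sum_iu_i>0$): $\alpha$ is effective iff $\alpha\cdot v_0\ge 0$, a condition preserved by $f$ since $f(v_0)=v_0'$. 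If $\mu=0$, so $K_Y^2=0$, I would show that \emph{every} numerical exceptional curve $\alpha$ is effective: $\lambda=\alpha+[D]$ has $\lambda^2=K_Y^2+1=1>0$ and $\lambda\cdot[K_Y]=-1<0$, so $\lambda\in\mathcal{C}^+$, hence $-\alpha-[D]=-\lambda\in-\mathcal{C}^+$ is not effective, and Riemann--Roch (as in the proof of Lemma~\ref{numeric}) then gives $\alpha$ effective. Either way $f$ maps the effective numerical exceptional curves of $Y$ onto those of $Y'$, so $f(\overline{\mathcal{A}}_{\mathrm{gen}}(Y))=\overline{\mathcal{A}}_{\mathrm{gen}}(Y')$ by the description in Lemma~\ref{describeA}(i); the equality $f(R_Y)=R_{Y'}$ then follows from the wall characterization in Theorem~\ref{mainprop}, or directly from the displayed formula for $R$.

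I expect the genuinely delicate case to be $\mu=0$, i.e.\ a cycle of $(-2)$-curves with $D^2=0$ ($Y$ a generic blow-up of nine points), where $v_0\propto-K_Y$ is nef but not big: it neither lies in the interior of the generic ample cone (needed for the description of $R_Y$) nor can serve as an ample test class (needed for the isometry statement), and these are precisely the two places where the perturbation argument and the isotropic Riemann--Roch computation, respectively, are forced on us. Everything else --- the Perron--Frobenius input, adjunction, and the bookkeeping with $\mathcal{C}^+$ --- I expect to be routine; one minor point is that $v_0$ may be an irrational $\Ar$-class, which is harmless since Lemma~\ref{numeric}(i) only uses nefness and positivity against $D$.
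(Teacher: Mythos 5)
Your proposal reaches the right conclusion and is organized around the same two mechanisms as the paper --- showing that effectivity of numerical exceptional curves is a lattice-theoretic condition when $(Y,D)$ is not negative definite, and then invoking Lemma~\ref{describeA} and Theorem~\ref{mainprop} --- but your choice of test class is genuinely different. The paper splits (after the same reduction via Lemma~\ref{excepcomp}) into the case ``some $D_j^2\geq 0$,'' where it applies Lemma~\ref{numeric}(i) with $H=D_j$, and the case ``all $D_i^2=-2$,'' where it uses the nef isotropic class $[D]$ and the one-line computation $(-\alpha+[K_Y])\cdot[D]=-1$ to see that \emph{every} numerical exceptional curve is effective. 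Your Perron--Frobenius class $v_0=\sum_i u_iD_i$ packages both cases uniformly (your dichotomy $\mu>0$ versus $\mu=0$ coincides with the paper's, since a negative semidefinite cycle matrix with a diagonal entry $\geq 0$ cannot have that entry's basis vector outside the radical), and it buys you a little more: in the $\mu>0$ case $v_0$ is nef and \emph{big} with $\Lambda\subseteq v_0^\perp$ negative definite, which gives a clean proof that $v_0$ is ample on a generic deformation and hence that every $\beta\in\Lambda$ of square $-2$ satisfies condition (ii) of Theorem~\ref{mainprop}. Your observation that preservation of $\mathcal{C}^+$ is automatic (by pairing against $v_0'$) is also a point the paper leaves implicit.

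The one step I would not accept as written is the perturbation in the $\mu=0$ case. You assert that for \emph{any} $w$ with $w\cdot\beta=0$ and $w\cdot D_i>0$, the class $v_0+\varepsilon w$ lies in the interior of $\overline{\mathcal{A}}_{\text{\rm{gen}}}$ for small $\varepsilon$, and that ``the only positivity that is not automatic is against curves disjoint from $D$.'' That is backwards: the curves disjoint from $D$ are the easy ones, while the real issue is positivity against the infinitely many effective numerical exceptional classes $\alpha$ (all of which satisfy $\alpha\cdot[D]=1$), whose walls accumulate at the isotropic ray $\Ar^+v_0$. The claim is in fact true, but it requires an argument: writing $y=\alpha+\tfrac12[D]$ (isotropic, in $\overline{\mathcal{C}^+}$) and using the reversed Cauchy--Schwarz inequality gives $x\cdot\alpha\geq\sqrt{2x^2}-\tfrac32\,x\cdot[D]$ for all $x\in\mathcal{C}^+$, and for $x=v_0+\varepsilon w$ the right side is $\sim 2\sqrt{\varepsilon\,(w\cdot D)}>0$; nothing of the sort is ``automatic.'' Better still, the perturbation is unnecessary: your own computation two sentences later (when $K_Y^2=0$, every numerical exceptional curve is effective) shows that $r_\beta$ permutes the effective numerical exceptional classes and hence satisfies condition (iii) of Theorem~\ref{mainprop} directly, which is exactly the paper's route. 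I would either substitute that argument or supply the estimate above.
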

\begin{proof}  By Lemma~\ref{excepcomp}, we may assume that no $D_i$ has self-intersection $-1$. The statement that the cycle is not negative definite is then equivalent to the statement that either $D_j^2 \geq 0$ for some $j$ or $D_i^2 = -2$ for all $i$ and $r\geq 2$. In the first case, $D_j$ is nef and $D_j \cdot D > 0$. Hence, if $\alpha$ is a numerical exceptional curve such that $\alpha\cdot [D_i] \geq 0$, then $\alpha$ is effective by Lemma~\ref{numeric}. Thus 
$\overline{\mathcal{A}}_{\text{\rm{gen}}}(Y)$ is the set of all $x\in \mathcal{C}^+_D(Y)$ such that $x\cdot \alpha \geq 0$  for all numerical exceptional curves $\alpha$ such that $\alpha\cdot [D_i] \geq 0$. Since $f(\alpha)^2 = \alpha^2$, $f([D_i]) = [D_i']$,  and $f(\alpha) \cdot [K_{Y'}] = \alpha \cdot [K_Y]$, it follows that $f(\overline{\mathcal{A}}_{\text{\rm{gen}}}(Y)) = \overline{\mathcal{A}}_{\text{\rm{gen}}}(Y')$. Applying this to reflection in a class $\beta$ of square $-2$ in $\Lambda(Y,D)$ then implies that $\beta \in R_Y$.

The case where $D_i^2 = -2$ for every $i$ is similar, using the nef divisor $D = \sum_iD_i$ with $D^2 = 0$. If $\alpha$ is a numerical exceptional curve, then $\alpha$ is effective  since $(-\alpha + [K_Y]) \cdot [D] =  \alpha \cdot [K_Y] = -1$. The rest of the argument proceeds as before.   
\end{proof}

\begin{remark} If $D$ is irreducible and not negative definite (i.e.\ $D^2 \geq 0$) and there are no $-2$-curves on $Y$, then, as is well-known and noted in Remark~\ref{bestposs}, every numerical exceptional curve is the class of an exceptional curve. However, if $D$ is reducible but not negative definite, then, even if there are no $-2$-curves on $Y$, there may well exist numerical exceptional curves which are not effective, and effective numerical exceptional curves which are not the class of an exceptional curve.
\end{remark} 

From now on we assume that $D$ is negative definite. The case $K_Y^2=-1$ can also be handled by straightforward methods, as noted in \cite{Looij}. (See also \cite{FriedmanMorgan}, II(2.7)(c) in case $D$ is irreducible.)

\begin{proposition}\label{minusone} Let $(Y,D)$ and $(Y', D')$ be two negative definite anticanonical pairs with $r(D) = r(D')$ and $K_Y^2 = K_{Y'}^2 = -1$.  Let $f\colon H^2(Y; \Zee) \to H^2(Y'; \Zee)$ be an isometry such that $f([D_i]) = [D_i']$ for all $i$ and  $f (\mathcal{C}^+(Y)) =  \mathcal{C}^+(Y')$. Then $f(\overline{\mathcal{A}}_{\text{\rm{gen}}}(Y)) = \overline{\mathcal{A}}_{\text{\rm{gen}}}(Y')$. Moreover, 
$$R_Y = \{\beta \in \Lambda(Y,D): \beta^2 = -2\},$$
and hence $f(R_Y) = R_{Y'}'$.
\end{proposition}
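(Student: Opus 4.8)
The plan is to show that, under the hypothesis $K_Y^2 = -1$, the cone $\overline{\mathcal{A}}_{\text{\rm{gen}}}(Y)$ is determined entirely by the lattice $H^2(Y;\Zee)$, the classes $[D_i]$, and the choice of positive cone component $\mathcal{C}^+(Y)$; once this is established, the equality $f(\overline{\mathcal{A}}_{\text{\rm{gen}}}(Y)) = \overline{\mathcal{A}}_{\text{\rm{gen}}}(Y')$ is formal, since $f$ preserves all of this data. First I would invoke Lemma~\ref{excepcomp} to reduce to the case in which no component of $D$ is a smooth rational curve of self-intersection $-1$: if some $D_i^2 = -1$, contracting it lands in the non-negative-definite situation, where Proposition~\ref{nonneg} already gives the conclusion. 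In the remaining case, by Lemma~\ref{describeA}(i) it is enough to prove that the set of \emph{effective} numerical exceptional curves on $Y$ is intrinsic to $\big(H^2(Y;\Zee),\{[D_i]\},\mathcal{C}^+(Y)\big)$, because $\overline{\mathcal{A}}_{\text{\rm{gen}}}(Y)$ is cut out of $\mathcal{C}^+(Y)$ by the inequalities $x\cdot[D_i]\geq 0$ together with $x\cdot\alpha\geq 0$ over all such $\alpha$.

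The key computation is the following. Given a numerical exceptional curve $\alpha$, I would set $\mu = \alpha - [K_Y] = \alpha + [D]$ and check, using $K_Y^2 = -1$, that $\mu\cdot[K_Y] = 0$ and $\mu^2 = 0$; thus $\mu$ is an isotropic class in the Lorentzian lattice $[K_Y]^\perp$ of signature $(1,9)$. Next I would pick, via Proposition~\ref{constructnef} applied to the negative definite configuration $D_1,\dots,D_r$, a nef and big divisor $H$ with $H\cdot D_i = 0$ for all $i$ and $H\cdot C > 0$ for every irreducible curve $C\neq D_i$, so that $H\in\mathcal{C}^+(Y)\cap[K_Y]^\perp$. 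By Lemma~\ref{remarkafternumeric}(iii), $\alpha$ is effective if and only if $H\cdot\alpha > 0$, and here $H\cdot\alpha = H\cdot\mu$. Finally I would observe that $\mathcal{C}^+(Y)\cap[K_Y]^\perp$ is nonempty and convex, hence lies in a single one of the two components of the positive cone of $[K_Y]^\perp$; since an isotropic class $\mu\neq 0$ in a Lorentzian lattice satisfies $v\cdot\mu\neq 0$ for every $v$ of positive square, with the sign of $v\cdot\mu$ constant on each component of the positive cone, the sign of $H\cdot\mu$, and therefore the effectivity of $\alpha$, depends only on $\mu$ and on which component $\mathcal{C}^+(Y)$ is — the remaining case $\mu = 0$, i.e.\ $\alpha = [K_Y]$, being a non-effective class. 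This is the required intrinsic characterization.

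With this in hand the conclusion is immediate: since $f([D_i]) = [D_i']$ forces $f([K_Y]) = [K_{Y'}]$ and $f([K_Y]^\perp) = [K_{Y'}]^\perp$, and since $f(\mathcal{C}^+(Y)) = \mathcal{C}^+(Y')$ implies $f$ carries the distinguished component $\mathcal{C}^+(Y)\cap[K_Y]^\perp$ onto $\mathcal{C}^+(Y')\cap[K_{Y'}]^\perp$, the isometry $f$ matches up effective numerical exceptional curves on the two sides, and Lemma~\ref{describeA}(i) then gives $f(\overline{\mathcal{A}}_{\text{\rm{gen}}}(Y)) = \overline{\mathcal{A}}_{\text{\rm{gen}}}(Y')$. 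For the description of the roots, the inclusion $R_Y\subseteq\{\beta\in\Lambda(Y,D):\beta^2=-2\}$ is the definition; conversely, given $\beta\in\Lambda(Y,D)$ with $\beta^2=-2$, the reflection $r_\beta$ is an integral isometry fixing each $[D_i]$ (because $\beta\cdot[D_i]=0$) and preserving $\mathcal{C}^+(Y)$ (a reflection in a class of negative square preserves each component of the positive cone), so the first part applied with $(Y',D')=(Y,D)$ and $f=r_\beta$ yields $r_\beta(\overline{\mathcal{A}}_{\text{\rm{gen}}}(Y)) = \overline{\mathcal{A}}_{\text{\rm{gen}}}(Y)$, hence $\beta\in R_Y$ by the characterization of $R_Y$ through condition (iii) of Theorem~\ref{mainprop}. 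This proves $R_Y = \{\beta\in\Lambda(Y,D):\beta^2=-2\}$, and the same for $Y'$; since $f$ restricts to an isometry $\Lambda(Y,D)\to\Lambda(Y',D')$, we get $f(R_Y)=R_{Y'}'$. The main obstacle, and the one place the hypothesis $K_Y^2=-1$ is really used, is the middle step — showing that effectivity of a numerical exceptional curve can be read off from the lattice together with its positive cone — which rests on the isotropy of $\mu=\alpha-[K_Y]$ and on the sign-of-pairing observation for isotropic classes.
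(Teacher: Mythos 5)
Your argument is correct and follows essentially the same route as the paper's: the paper likewise reduces to deciding effectivity of numerical exceptional curves via Lemma~\ref{remarkafternumeric}, notes that $\mu = \alpha + [D]$ is isotropic precisely because $K_Y^2 = -1$, and invokes the Light Cone Lemma to show effectivity is equivalent to $\mu \in \overline{\mathcal{C}^+}\setminus\{0\}$, a condition visibly preserved by $f$; the deduction of $R_Y$ by applying this to $r_\beta$ is also the paper's argument. (Your opening reduction via Lemma~\ref{excepcomp} is vacuous, since negative definiteness of $D$ already excludes a component of self-intersection $-1$, which is all the paper observes on that point.)
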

\begin{proof} Since $(Y,D)$ is negative definite,   no component of $D$ is a smooth rational curve of self-intersection $-1$. Fix a nef and big divisor $H$ such that $H\cdot D_i =0$ for all $i$ and $H\cdot G > 0$ for every irreducible curve $G \neq D_i$. If $\alpha$ is a numerical exceptional curve,   $(\alpha -[K_Y])^2 = (\alpha + [D]) ^2 = 0$. By Lemma~\ref{remarkafternumeric}, $\alpha$ is effective   $\iff$ $[H] \cdot \alpha > 0$ $\iff$ $[H] \cdot (\alpha + [D]) > 0$. By the Light Cone Lemma (cf.\ \cite{FriedmanMorgan}, p.\ 320), this last condition is equivalent to: $\alpha + [D] \in \overline{\mathcal{C}^+}-\{0\}$. Since this condition is clearly preserved by an isometry $f$ as in the statement of the proposition, we see that $f(\overline{\mathcal{A}}_{\text{\rm{gen}}}(Y)) = \overline{\mathcal{A}}_{\text{\rm{gen}}}(Y')$. The final statement then follows as in the proof of Proposition~\ref{nonneg}.   
\end{proof}

\begin{remark} The hypothesis $K_Y^2 =-1$ implies that $r(D) \leq 10$, so there are only finitely many examples of the above type. For $r(D) = 10$, there is essentially just one combinatorial possibility for $(Y,D)$ neglecting the orientation (cf.\ \cite{FriedmanMiranda}, (4.7), where it is easy to check that this is the only possibility). For $r(D) = 9$, however, there are two different possibilities for the combinatorial type of $(Y,D)$ (again ignoring the orientation). Begin with an anticanonical pair $(\overline{Y}, \overline{D})$, where $\overline{Y}$ is a rational elliptic surface and $\overline{D}=\overline{D}_0 + \cdots + \overline{D}_8$ is a fiber of type $\widetilde{A}_8$ (or $I_9$ in Kodaira's notation). There is a unique such rational elliptic surface $\overline{Y}$ and its Mordell-Weil group has order $3$ (see for example \cite{MirandaPersson}). In particular, possibly after relabeling the components, there is an exceptional curve meeting $\overline{D}_i$ $\iff$ $i=0,3,6$. It is easy to see that blowing up a point on a component $\overline{D}_i$ meeting an exceptional curve leads to a different combinatorial possibility for an anticanonical pair $(Y,D)$ with $K_Y^2 =-1$ and $r(D) = 9$ than blowing up a point on a component $\overline{D}_i$ which does not meet an exceptional curve.
\end{remark}

We turn now to the case where $(Y,D)$ is negative definite but with no assumption on $K_Y^2$.

\begin{definition} A point $x\in \mathcal{C}^+ \cap \Lambda$ is \textsl{$R$-distinguished} if there 
exists a codimension one negative definite subspace $V$ of $\Lambda \otimes \Ar$ spanned by elements of $R$ such that $x\in V^\perp$. Note that the definition only depends on the deformation type of the pair $(Y,D)$.
\end{definition}

\begin{remark} Clearly, if $V$ is a codimension one negative definite subspace   of $\Lambda \otimes \Ar$ spanned by elements of $R$, then $V$ is defined over $\Q$ and   $V^\perp \cap (\Lambda \otimes \Ar)$ is a one-dimensional subspace of $H^2(Y; \Ar)$ defined over $\Q$ and spanned by an $h\in H^2(Y; \Zee)$ with $h^2 > 0$, $h\cdot [D_i] =0$, and $h\cdot \beta = 0$ for all   $\beta \in R\cap V$. Hence, if $h\in \mathcal{C}^+\cap \Lambda$, then $h$ is $R$-distinguished.

Also, if the rank of $\Lambda$ is one, then $\{0\}$ is a codimension one negative definite subspace of $\Lambda \otimes \Ar$, and hence every point of $\mathcal{C}^+ \cap \Lambda$ is  $R$-distinguished. 

However, as we shall see, there exist deformation types $(Y,D)$ with no $R$-distinguished points.
\end{remark}

The following is also clear:

\begin{lemma} Let $(Y,D)$ and $(Y', D')$ be two anticanonical pairs with $r(D) = r(D')$ and let $f\colon H^2(Y; \Zee) \to H^2(Y'; \Zee)$ be an isometry such that $f([D_i]) = [D_i']$ for all $i$, $f (\mathcal{C}^+(Y)) =  \mathcal{C}^+(Y')$, and $f(R_Y) = R_{Y'}$. Then, if $x$ is a $R_Y$-distinguished point of $\mathcal{C}^+(Y) \cap \Lambda(Y,D)$, $f(x)$ is a $R_{Y'}$-distinguished point of $ \mathcal{C}^+(Y') \cap \Lambda(Y',D')$. \qed
\end{lemma}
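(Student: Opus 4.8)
The plan is to unwind the definitions and check that an isometry $f$ with the stated properties transports the data witnessing $R_Y$-distinguishedness of $x$ to data witnessing $R_{Y'}$-distinguishedness of $f(x)$. So suppose $x\in \mathcal{C}^+(Y)\cap\Lambda(Y,D)$ is $R_Y$-distinguished, and let $V\subseteq \Lambda(Y,D)\otimes\Ar$ be a codimension one negative definite subspace spanned by elements of $R_Y$ with $x\in V^\perp$. I would set $V' = f(V)$ and claim $V'$ is a codimension one negative definite subspace of $\Lambda(Y',D')\otimes\Ar$ spanned by elements of $R_{Y'}$, with $f(x)\in (V')^\perp$.

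The verification splits into a short list of routine points. First, since $f([D_i]) = [D_i']$ for all $i$ and $f$ is an isometry, $f$ maps $\Lambda(Y,D) = \{[D_i]\}^\perp$ isomorphically onto $\Lambda(Y',D') = \{[D_i']\}^\perp$; hence $f$ induces an $\Ar$-linear isometry $\Lambda(Y,D)\otimes\Ar \to \Lambda(Y',D')\otimes\Ar$, so $V' = f(V)$ is again a subspace of codimension one, and it is negative definite because $f$ preserves the intersection form. Second, if $V = \operatorname{span}_\Ar\{\beta_1,\dots,\beta_k\}$ with each $\beta_j\in R_Y$, then $V' = \operatorname{span}_\Ar\{f(\beta_1),\dots,f(\beta_k)\}$, and each $f(\beta_j)\in R_{Y'}$ since $f(R_Y) = R_{Y'}$ by hypothesis; thus $V'$ is spanned by elements of $R_{Y'}$. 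Third, because $f$ is an isometry, $f(V^\perp) = f(V)^\perp = (V')^\perp$ inside $H^2(Y';\Ar)$, and since $x\in V^\perp$ we get $f(x)\in (V')^\perp$. Finally, $f(x)\in \mathcal{C}^+(Y')\cap\Lambda(Y',D')$: it lies in $\Lambda(Y',D')$ by the first point, and it lies in $\mathcal{C}^+(Y')$ because $f(\mathcal{C}^+(Y)) = \mathcal{C}^+(Y')$ by hypothesis. Putting these together, $f(x)$ is $R_{Y'}$-distinguished.

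There is essentially no obstacle here; the statement is genuinely a formal consequence of the definition plus the three transport properties of $f$ (preservation of the form, of the $[D_i]$, hence of $\Lambda$, of the positive cone, and of $R$). The only thing worth pausing over is to make sure $f$ does identify $\Lambda(Y,D)\otimes\Ar$ with $\Lambda(Y',D')\otimes\Ar$ as inner-product spaces — but this is immediate from $f([D_i])=[D_i']$ and $f$ being an isometry — and that "codimension one" is read inside $\Lambda\otimes\Ar$ rather than inside $H^2\otimes\Ar$, which is harmless since $f$ respects both.

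\begin{proof} Suppose $x\in \mathcal{C}^+(Y) \cap \Lambda(Y,D)$ is $R_Y$-distinguished, and let $V\subseteq \Lambda(Y,D)\otimes\Ar$ be a codimension one negative definite subspace spanned by elements of $R_Y$ with $x\in V^\perp$. Since $f$ is an isometry with $f([D_i]) = [D_i']$ for all $i$, $f$ maps $\Lambda(Y,D) = \{[D_i]\}^\perp$ isometrically onto $\Lambda(Y',D') = \{[D_i']\}^\perp$, and hence induces an isometry $\Lambda(Y,D)\otimes\Ar \to \Lambda(Y',D')\otimes\Ar$. Set $V' = f(V)$. Then $V'$ is a subspace of $\Lambda(Y',D')\otimes\Ar$ of codimension one, and it is negative definite because $f$ preserves the intersection form. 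If $V$ is spanned by $\beta_1, \dots, \beta_k \in R_Y$, then $V'$ is spanned by $f(\beta_1), \dots, f(\beta_k)$, and each $f(\beta_j) \in R_{Y'}$ since $f(R_Y) = R_{Y'}$; thus $V'$ is spanned by elements of $R_{Y'}$. Since $f$ is an isometry, $f(V^\perp) = f(V)^\perp = (V')^\perp$, so $x\in V^\perp$ gives $f(x) \in (V')^\perp$. Finally, $f(x) \in \Lambda(Y',D')$ by the above, and $f(x) \in \mathcal{C}^+(Y')$ since $f(\mathcal{C}^+(Y)) = \mathcal{C}^+(Y')$. Hence $f(x)$ is an $R_{Y'}$-distinguished point of $\mathcal{C}^+(Y') \cap \Lambda(Y',D')$.
\end{proof}
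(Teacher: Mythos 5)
Your proof is correct and is exactly the argument the paper has in mind: the paper states this lemma with no proof at all (introducing it with ``The following is also clear''), since it is a formal transport-of-structure consequence of the definition of $R$-distinguished together with the hypotheses on $f$. Your careful unwinding --- $f$ restricts to an isometry $\Lambda(Y,D)\otimes\Ar\to\Lambda(Y',D')\otimes\Ar$, carries $V$ to a codimension one negative definite subspace spanned by elements of $R_{Y'}$, and sends $V^\perp$ to $f(V)^\perp$ --- is precisely the verification being omitted.
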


Our goal now is to prove:

\begin{theorem}\label{disttheorem} Let $(Y,D)$ and $(Y', D')$ be two anticanonical pairs with $r(D) = r(D')$ and let $f\colon H^2(Y; \Zee) \to H^2(Y'; \Zee)$ be an isometry such that $f([D_i]) = [D_i']$ for all $i$, $f (\mathcal{C}^+(Y)) =  \mathcal{C}^+(Y')$, and $f(R_Y) = R_{Y'}$.  If there exists a $R$-distinguished point of $\mathcal{C}^+ \cap \Lambda$, then $f(\overline{\mathcal{A}}_{\text{\rm{gen}}}(Y)) = \overline{\mathcal{A}}_{\text{\rm{gen}}}(Y')$.
\end{theorem}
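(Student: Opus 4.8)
The plan is to reduce the statement to the fact that $f$ maps the $R_Y$-distinguished point $x$ to an $R_{Y'}$-distinguished point $f(x)$ (the previous lemma), and then to show that near such a distinguished point the generic ample cone is controlled entirely by the roots in the defining subspace $V$, plus finitely many numerical exceptional curves that meet $V^\perp$ transversally. The key observation is that if $V\subseteq \Lambda\otimes\Ar$ is a codimension one negative definite subspace spanned by elements of $R$, then $V^\perp\cap(\Lambda\otimes\Ar)$ is a line spanned by a class $h$ with $h^2>0$, $h\cdot[D_i]=0$, and $h\cdot\beta=0$ for all $\beta\in R\cap V$; by Proposition~\ref{constructnef} applied after a suitable specialization, or directly from Theorem~\ref{mainprop}(ii), reflections in the $\beta\in R\cap V$ preserve $\overline{\mathcal{A}}_{\text{\rm{gen}}}$, and the group $\mathsf{W}(R\cap V)$ they generate acts on $V$ (which is a genuine negative definite root lattice) as a finite reflection group. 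Hence $h$ lies in a chamber for this finite group, and after replacing $h$ by a suitable $\mathsf{W}(R\cap V)$-translate we may assume $h$ lies in the closure of $\overline{\mathcal{A}}_{\text{\rm{gen}}}$.

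First I would establish that $x$ (resp.\ $f(x)$) lies on the boundary of $\overline{\mathcal{A}}_{\text{\rm{gen}}}(Y)$ (resp.\ $\overline{\mathcal{A}}_{\text{\rm{gen}}}(Y')$): since $V$ is negative definite and spanned by roots, every wall $W^\beta$ with $\beta\in R\cap V$ contains $x$, so $x$ cannot be interior; on the other hand, by the reflection group argument above we can choose a representative in $\overline{\mathcal{A}}_{\text{\rm{gen}}}$. The crucial point is then the following local statement: there is a neighborhood $U$ of $x$ in $\mathcal{C}^+$ such that $\overline{\mathcal{A}}_{\text{\rm{gen}}}(Y)\cap U$ is cut out by the walls $W^\beta$, $\beta\in R\cap V$, together with the walls $W^\alpha$ for those finitely many effective numerical exceptional curves $\alpha$ with $x\cdot\alpha=0$. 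This uses the local finiteness of walls on $\mathcal{C}^+$ (the standard result quoted after the definition of oriented walls, applied to the set of effective numerical exceptional curves, whose square is $-1$, together with $R$, whose square is $-2$) and Lemma~\ref{describeA}: away from the finitely many walls through $x$, every other effective numerical exceptional curve $\alpha$ satisfies $x\cdot\alpha>0$, hence $y\cdot\alpha>0$ for $y$ near $x$. The walls through $x$ are then exactly the $W^\alpha$ with $\alpha\cdot x=0$, $\alpha$ an effective numerical exceptional curve or $\alpha\in R\cap V$ — and I would argue that the condition "$\alpha$ effective" is itself detected numerically near $x$ via Lemma~\ref{numeric} and the nef-and-big class $h$ (choosing $h$, after the reflection-group normalization, to be nef with $h\cdot D>0$), so that effectivity of a numerical exceptional curve $\alpha$ with $\alpha\cdot h$ small is equivalent to $\alpha\cdot h\geq 0$.

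The final step assembles this: since $f$ is an isometry fixing the $[D_i]$, preserving $\mathcal{C}^+$, and sending $R_Y$ to $R_{Y'}$, it carries the set $R_Y\cap V$ bijectively to $R_{Y'}\cap f(V)$, and it carries numerical exceptional curves to numerical exceptional curves ($f$ preserves squares and $f([K_Y])=-\sum f([D_i])=-\sum[D_i']=[K_{Y'}]$). Because effectivity near the distinguished point is the numerical condition $\alpha\cdot h\geq 0$ and $f$ preserves $h$ up to the image class $f(h)$ which is again nef-and-big orthogonal to the $[D_i']$ after the same normalization on the $Y'$ side, $f$ matches up the finitely many walls through $x$ with those through $f(x)$. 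Therefore $f(\overline{\mathcal{A}}_{\text{\rm{gen}}}(Y))$ and $\overline{\mathcal{A}}_{\text{\rm{gen}}}(Y')$ agree on a neighborhood of $f(x)$, in particular share an open set, and Lemma~\ref{containopen} forces $f(\overline{\mathcal{A}}_{\text{\rm{gen}}}(Y)) = \overline{\mathcal{A}}_{\text{\rm{gen}}}(Y')$.

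The main obstacle I anticipate is the bookkeeping in the local step: controlling precisely which numerical exceptional curves pass through the distinguished point and showing that their effectivity is governed by a numerical inequality that $f$ respects. One delicate case is when $V^\perp$ meets $\partial\overline{\mathcal{A}}_{\text{\rm{gen}}}$ along a wall $W^\alpha$ of an effective numerical exceptional curve $\alpha$ that is \emph{not} the class of an exceptional curve on the generic $Y_t$ (as in Example~\ref{irredex}); one must check that such an $\alpha$ still has $\alpha\cdot h>0$ for the chosen nef class, so that no spurious face is created, and that the same numerical pattern appears on the $Y'$ side. This is exactly where the interplay between Lemma~\ref{numeric}, Lemma~\ref{remarkafternumeric}, and the choice of $h$ as a nef-and-big class with $h\cdot D>0$ does the work, and it is the part of the argument that requires the most care.
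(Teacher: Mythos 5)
Your overall shape is right -- send the distinguished point $x$ to $f(x)$, show the generic ample cone is characterized by numerical conditions that $f$ visibly preserves -- but the heart of the argument is asserted rather than proved. The crucial claim is that effectivity of a numerical exceptional curve $\alpha$ is detected by the sign of $\alpha\cdot x$ (equivalently $\alpha\cdot h$). You cannot get this from Lemma~\ref{numeric} or Lemma~\ref{remarkafternumeric} as you suggest: Lemma~\ref{numeric}(i) requires $H\cdot D>0$, whereas $h\in\Lambda$ has $h\cdot D=0$; and Lemma~\ref{remarkafternumeric} requires $H\cdot G>0$ for \emph{every} irreducible curve $G\neq D_i$, which fails here precisely because $h$ is orthogonal to the roots spanning $V$. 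The missing step (Proposition~\ref{aprop} in the paper) is to use the surjectivity of the period map (Theorem~\ref{surjper}) and Lemma~\ref{definv} to specialize to a surface with $\Ker\varphi_Y=V\cap\Lambda$, so that by Theorem~\ref{mainprop}(i) the roots in $V$ are spanned by classes of actual $-2$-curves $C_1,\dots,C_k$; then Proposition~\ref{constructnef} makes $x$ proportional to a nef and big class $H$ with $[H]^\perp$ spanned by the $[C_i]$ and $[D_j]$, which shows $x\in\overline{\mathcal{A}}_{\text{\rm{gen}}}$, and an integrality argument (Lemma~\ref{negdef}(ii)) combined with $(\sum_ic_iC_i)^2\le -2$ unless $\sum_ic_iC_i=0$ shows that a non-effective numerical exceptional curve with $\alpha\cdot x\ge 0$ must lie in the span of the $[D_j]$. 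Without this, your "delicate case" is not a loose end but the entire content of the theorem.

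Two further points. First, your claim that $x$ lies on the boundary of $\overline{\mathcal{A}}_{\text{\rm{gen}}}$ \emph{because} it lies on the walls $W^\beta$, $\beta\in R\cap V$, is backwards: by Theorem~\ref{mainprop}(ii) those walls pass through the \emph{interior} of $\overline{\mathcal{A}}_{\text{\rm{gen}}}$, so lying on them says nothing; likewise, replacing $h$ by a $\mathsf{W}(R\cap V)$-translate is vacuous since $h$ spans $V^\perp\cap(\Lambda\otimes\Ar)$ and is therefore fixed by every such reflection. Second, once the effectivity criterion is in hand, the local analysis and appeal to Lemma~\ref{containopen} are unnecessary: one gets the global description of $\overline{\mathcal{A}}_{\text{\rm{gen}}}(Y)$ as $\{y\in\mathcal{C}^+_D: \alpha\cdot y\ge 0$ for all numerical exceptional $\alpha$ not in the span of the $[D_j]$ with $\alpha\cdot x\ge 0\}$, and $f$ carries this description term by term to the corresponding description of $\overline{\mathcal{A}}_{\text{\rm{gen}}}(Y')$ relative to $f(x)$, which is $R_{Y'}$-distinguished by the preceding lemma.
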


We begin by showing:

\begin{proposition}\label{aprop} Let $x$ be a $R$-distinguished point of $\mathcal{C}^+ \cap \Lambda$. Then $x\in \overline{\mathcal{A}}_{\text{\rm{gen}}}$. Moreover, if $\alpha$ is a numerical exceptional curve and $\alpha$ is not in the span of the $[D_j]$, then $\alpha$ is  effective  $\iff$ $\alpha \cdot x \geq 0$.
\end{proposition}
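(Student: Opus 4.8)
The plan is to deform $(Y,D)$ to a surface where the $R$-distinguished point becomes ample on a contraction, and then read off the effectivity of numerical exceptional curves from numerical data that is invariant under $f$. More precisely, let $V$ be a codimension one negative definite subspace of $\Lambda \otimes \Ar$ spanned by elements of $R$ with $x \in V^\perp$. By the remark following the definition of $R$-distinguished, we may assume $x$ is (a positive multiple of) an integral class $h$ with $h^2 > 0$, $h\cdot[D_i]=0$, and $h\cdot\beta = 0$ for all $\beta\in R\cap V$. Since each such $\beta$ lies in $R$, condition (iii) of Theorem~\ref{mainprop} gives $r_\beta(\overline{\mathcal{A}}_{\text{\rm{gen}}}) = \overline{\mathcal{A}}_{\text{\rm{gen}}}$, and since $h$ is fixed by every $r_\beta$ with $\beta\in R\cap V$, I would first argue that $h$ lies in $\overline{\mathcal{A}}_{\text{\rm{gen}}}$: if not, $h$ is separated from the interior of $\overline{\mathcal{A}}_{\text{\rm{gen}}}$ by a wall $W^{[C]}$ for a $-2$-curve $C$ on a generic deformation, but then the finite reflection group generated by the $r_\beta$, $\beta\in R\cap V$ (which is finite because $V$ is negative definite) fixes $h$ yet its chamber structure on the negative definite space $\langle h\rangle^\perp \cap \Lambda = V$ would have to be compatible with $W^{[C]}$, forcing $[C]\in V$ and hence $h\cdot[C]=0$, contradicting that $W^{[C]}$ strictly separates $h$ from the interior. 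Thus $x\in\overline{\mathcal{A}}_{\text{\rm{gen}}}$.

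For the effectivity criterion, I would pass to a deformation $Y_1$ of $Y$ with trivial monodromy such that $\varphi_{Y_1}$ kills exactly the saturation of the lattice spanned by $R\cap V$ inside $\Lambda$ — this is possible by Theorem~\ref{surjper}, the surjectivity of the period map. Then on $Y_1$ the classes in $R\cap V$ become (up to sign and the Weyl-group action, by Theorem~\ref{mainprop}) classes of $-2$-curves, and the generic ample cone $\overline{\mathcal{A}}_{\text{\rm{gen}}}$ is locally constant by Lemma~\ref{definv}, as is the set of effective numerical exceptional curves. The class $h$, being orthogonal to all of $R\cap V$ and to the $[D_i]$, is (after applying an element of $\mathsf{W}(\Delta_{Y_1})$, using Lemma~\ref{reflect} and Lemma~\ref{permapinvar}) represented by a nef and big divisor $H$ on $Y_1$ with $H\cdot D_i = 0$ for all $i$, $H\cdot C = 0$ exactly for the $-2$-curves $C$ with $[C]\in V$, and $H\cdot G > 0$ for every other irreducible curve $G$. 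This is the key geometric input: $h$ is the class of a divisor contracting precisely $D$ together with the $-2$-curves living in $V$.

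Now given a numerical exceptional curve $\alpha$ not in the span of the $[D_j]$, I would apply Lemma~\ref{numeric}(i) (with the nef divisor $H$ just constructed): if $\alpha\cdot x = \alpha\cdot[H] \geq 0$ then $\alpha$ is effective. Conversely, if $\alpha\cdot x < 0$ then $\alpha$ cannot be effective, since $H$ is nef — except one must rule out the degenerate possibility that $\alpha$ is effective with $\alpha\cdot[H]=0$, which by the argument in Lemma~\ref{remarkafternumeric}(i),(iii) forces every component of $\alpha$ to be a $D_i$ or a $-2$-curve in $V$; but then $\alpha$ would lie in the span of $V$ and the $[D_i]$, which is $\langle h\rangle^\perp$, and a short computation (the relevant effective sum $\sum a_iD_i + \sum b_jC_j$ of curves in a negative definite configuration has square $< -1$ unless it is a single $-1$-curve, which is excluded since no $D_i$ is a $-1$-curve and the $C_j$ are $-2$-curves) shows $\alpha^2 < -1$, contradicting $\alpha^2 = -1$. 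This gives: $\alpha$ not in the span of the $[D_j]$ is effective $\iff \alpha\cdot x \geq 0$, as desired; and combined with Lemma~\ref{describeA}(i), which describes $\overline{\mathcal{A}}_{\text{\rm{gen}}}$ in terms of the $[D_i]$ and the effective numerical exceptional curves, this will feed directly into the proof of Theorem~\ref{disttheorem}.

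The main obstacle I anticipate is the first step — showing $x\in\overline{\mathcal{A}}_{\text{\rm{gen}}}$ — and more precisely controlling the interaction between walls $W^{[C]}$ for $-2$-curves $C$ and the fixed subspace $V^\perp$ of the finite reflection group $\mathsf{W}(R\cap V)$. The cleaner route is probably the deformation argument: deform so that exactly the classes in (the saturation of) $\langle R\cap V\rangle$ become $-2$-curves, construct $H$ nef and big with the prescribed zero locus via Proposition~\ref{constructnef} applied to $D$ together with those $-2$-curves, and observe $h = [H]$ up to scaling and the $\mathsf{W}(\Delta_{Y_1})$-action; then $x = h \in \overline{\mathcal{A}}(Y_1) \subseteq \overline{\mathcal{A}}_{\text{\rm{gen}}}$ essentially by construction, and $\overline{\mathcal{A}}_{\text{\rm{gen}}}$ is deformation-invariant by Lemma~\ref{definv}. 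The delicate point is checking that one really can choose the deformation so that $H$ has \emph{no} extra zeros, i.e.\ that no $-2$-curve outside $V$ sneaks in — but this follows from genericity within the locus $\{\varphi_{Y_1} \text{ kills the saturation of }\langle R\cap V\rangle\}$ together with local finiteness of walls.
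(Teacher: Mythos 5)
Your overall strategy is the same as the paper's: deform $(Y,D)$ so that $\Ker\varphi_{Y_1}$ is exactly $V\cap\Lambda$, so that every $-2$-curve has class in $V$ and, by Theorem~\ref{mainprop}(i), the $-2$-curves span $V$; then Proposition~\ref{constructnef} applied to those $-2$-curves together with the $D_j$ produces a nef and big $H$ whose class spans the one-dimensional orthogonal complement of $V\oplus\bigoplus_j\Ar[D_j]$ and is therefore a positive multiple of $x$, giving $x=[H]/t\in\overline{\mathcal{A}}(Y_1)\subseteq\overline{\mathcal{A}}_{\text{\rm{gen}}}$, which is deformation-invariant by Lemma~\ref{definv}. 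Two of your worries evaporate: no $-2$-curve "sneaks in" outside $V$ because any $-2$-curve $C$ is disjoint from $D$, hence $\varphi_{Y_1}([C])=1$ and $[C]\in\Ker\varphi_{Y_1}=V\cap\Lambda$; and no $\mathsf{W}(\Delta_{Y_1})$-adjustment of $h$ is needed, since $[H]$ and $h$ span the same line and both lie in $\overline{\mathcal{C}^+}$. Your first (reflection-group) argument for $x\in\overline{\mathcal{A}}_{\text{\rm{gen}}}$ can simply be discarded.

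The one step that does not work as written is the effectivity criterion. You invoke Lemma~\ref{numeric}(i), but its hypothesis is $H\cdot D>0$, whereas your $H$ satisfies $H\cdot D=0$; and the bare implication ``$\alpha\cdot[H]\geq 0\Rightarrow\alpha$ effective'' is false without the hypothesis that $\alpha$ is not in the span of the $[D_j]$ (e.g.\ $\alpha=[K_Y]$ when $K_Y^2=-1$). The delicate case is $\alpha\cdot[H]=0$ in the direction ``$\alpha\cdot x\geq 0\Rightarrow\alpha$ effective'': Riemann--Roch only yields that $\alpha$ or $-\alpha+[K_Y]$ is effective, and one must exclude the second alternative. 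You have attached your degenerate-case analysis to the other direction (``$\alpha$ effective with $\alpha\cdot[H]=0$''), where nothing needs to be ruled out since $\alpha\cdot x<0$ already contradicts nefness. The repair is the computation you sketch, applied to the right object, and is exactly what the paper does: if $-\alpha+[K_Y]=[G]$ with $G$ effective, then $H\cdot G=0$ forces every component of $G$ to be one of the $C_i$ or $D_j$, so by Lemma~\ref{negdef}(ii) $\alpha$ is an integral combination $\sum_i c_i[C_i]+\sum_j d_j[D_j]$; then $-1=\alpha^2=(\sum_ic_iC_i)^2+(\sum_jd_jD_j)^2$ with both terms nonpositive and the first either $0$ or $\leq -2$ (evenness of the root lattice), so $\sum_ic_iC_i=0$ and $\alpha$ lies in the span of the $[D_j]$, contradicting the hypothesis. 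Note that your stated conclusion ``$\alpha^2<-1$ unless a single $-1$-curve'' is not quite right --- the $[D_j]$-part alone can have square $-1$ --- it is the hypothesis that $\alpha$ is not in the span of the $[D_j]$ that closes the argument.
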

\begin{proof} It is enough by Lemma~\ref{definv} to check this on some (global) deformation of $(Y,D)$ with trivial monodromy. By Theorem~\ref{surjper}, we can assume that $$\Ker \varphi_Y = V \cap \Lambda,$$ where $V$ is as in the definition of $R$-distinguished.  In particular, if $C \in \Delta_Y$, i.e.\ $C$ is a $-2$-curve on $Y$, then $[C] \in V$. It follows from (i) of Theorem~\ref{mainprop} that every $\beta \in V\cap R$ is a sum of elements of $\Delta_Y$, so that $\Delta_Y$ spans $V$ over $\Q$. Thus there exist 
$-2$-curves $C_1, \dots, C_k$ such that $V$ is spanned by the classes $[C_i]$,   and the  intersection matrix $(C_i\cdot C_j)$ is negative definite.  The classes $[C_1], \dots, [C_k], [D_1], \dots, [D_r]$ span a negative definite sublattice of $H^2(Y; \Zee)$. By Lemma~\ref{constructnef}  there exists a nef and big divisor $H$ such that $H$ is perpendicular to the curves $C_1, \dots, C_k, D_1, \dots, D_r$. Clearly, then, $[H] \in \overline{\mathcal{A}}(Y) \subseteq \overline{\mathcal{A}}_{\text{\rm{gen}}}$  and $[H] = tx$ for some $t\in \Ar^+$. Hence $x\in \overline{\mathcal{A}}_{\text{\rm{gen}}}$ as well. Note that $[H]^\perp$ is spanned over $\Q$ by $[C_1], \dots, [C_k], [D_1], \dots, [D_r]$.

Since $x\in \overline{\mathcal{A}}(Y)$, if $\alpha$ is effective, $x\cdot \alpha \geq 0$. Conversely, suppose that $\alpha$ is a numerical exceptional curve with $x\cdot \alpha \geq 0$ and that $\alpha$ is not   effective. Then $-\alpha + [K_Y]=[G]$, where $G$ is effective, and $H\cdot (-\alpha + [K_Y]) = -\alpha \cdot [H] \leq 0$. Hence $(-\alpha + [K_Y]) \cdot [H] = 0$.  

\begin{claim} $-\alpha + [K_Y] = \sum_ia_i[C_i] + \sum_jb_j[D_j]$ where the $a_i, b_j \in \Zee$.
\end{claim} 
\begin{proof}[Proof of the claim] In any case, since $-\alpha + [K_Y]$ is perpendicular to $[H]$, there exist $a_i, b_j \in \Q$ such that $-\alpha + [K_Y] = \sum_ia_i[C_i] + \sum_jb_j[D_j]$.   Write $-\alpha + [K_Y]= [G] = \sum_in_i[C_i] + \sum_j m_j[D_j] +[F]$, where   $n_i, m_j \in \Zee$ and $F$ is an effective curve not containing $C_i$ or $D_j$ in its support for any $i,j$. By (ii) of Lemma~\ref{negdef}, $F=0$, $a_i = n_i$ and $b_j= m_j$ for all $i,j$.  Hence $a_i, b_j \in \Zee$.
\end{proof}

  Since $-\alpha + [K_Y]$ is an integral linear combination of the $[C_i]$ and $[D_j]$, the same holds for  $\alpha$. Then $\alpha = \sum_ic_i[C_i] + \sum_jd_j[D_j]$ with $c_i, d_j\in \Zee$. But $\alpha ^2 =-1 = (\sum_ic_iC_i)^2 + (\sum_jd_jD_j)^2$. Both terms are non-positive, and so $(\sum_ic_iC_i)^2 \geq -1$. But if $\sum_ic_iC_i \neq 0$, then $(\sum_ic_iC_i)^2 \leq -2$. Thus $\sum_ic_iC_i =0$ and  $\alpha$ lies in the span of the $[D_j]$. Conversely, if $\alpha$ is not in the span of the $[D_j]$ and $\alpha \cdot x \geq 0$, then $\alpha$ is the class of an effective curve.
\end{proof}

\begin{proof}[Proof of Theorem~\ref{disttheorem}] It follows from Proposition~\ref{aprop} that, if $x\in \mathcal{C}^+(Y) \cap \Lambda(Y,D)$ is $R_Y$-distinguished, then $\overline{\mathcal{A}}_{\text{\rm{gen}}}(Y)$ is the set of all $y\in \mathcal{C}^+_D(Y)$ such that $\alpha \cdot y \geq 0$ for all $\alpha$ a numerical exceptional curve on $Y$, not in the span of the $[D_i]$, such that $\alpha \cdot x \geq 0$.  Let $f$ be an isometry satisfying the conditions of the theorem. Then $f(x)$ is $R_{Y'}$-distinguished, and $f(\overline{\mathcal{A}}_{\text{\rm{gen}}}(Y))$ is clearly the set of all $y\in \mathcal{C}^+_{D'}(Y')$ such that $\alpha \cdot y \geq 0$ for all $\alpha$ a numerical exceptional curve on $Y'$, not in the span of the $[D_i']$, such that $\alpha \cdot f(x)  \geq 0$. Again by Proposition~\ref{aprop}, this set is exactly $\overline{\mathcal{A}}_{\text{\rm{gen}}}(Y')$.
\end{proof}

Theorem~\ref{disttheorem} covers all of the cases in \cite{Looij} except for the case of $5$ components: By inspection of the root diagrams on pp.\ 275--277 of \cite{Looij}, the complement of any trivalent vertex spans a negative definite codimension one subspace, except in the case of $5$ components. To give a direct argument along the above lines which also handles this case (and all of the other cases in \cite{Looij}), we recall the basic setup there: There exists a subset $B= \{\beta_1, \dots, \beta_n\}  \subseteq R$ such that $B$ is a basis for $\Lambda\otimes \Ar$, and there exist $n_i\in \Zee^+$ such that $(\sum_in_i\beta_i) \cdot \beta_j > 0$ for all $j$ (compare also \cite{Looijpre} (1.18)). In particular, note that the intersection matrix $(\beta_i\cdot \beta_j)$ is non-singular. Finally, by the  classification of Theorem (1.1) in \cite{Looij}, there exists a deformation of $(Y,D)$ for which $\beta_i = [C_i]$ is the class of a $-2$-curve for all $i$. (With some care, this explicit argument could be avoided by appealing to the surjectivity of the period map and (i) of Theorem~\ref{mainprop}.)

\begin{theorem}    Let $(Y,D)$ and $(Y', D')$ be two anticanonical pairs satisfying the hypotheses of the preceding paragraph, both negative definite,   with $r(D) = r(D')$, and let $f\colon H^2(Y; \Zee) \to H^2(Y'; \Zee)$ be an isometry such that $f([D_i]) = [D_i']$ for all $i$, $f (\mathcal{C}^+(Y)) =  \mathcal{C}^+(Y')$, and $f(R_Y) = R_{Y'}$.  Then $f(\overline{\mathcal{A}}_{\text{\rm{gen}}}(Y)) = \overline{\mathcal{A}}_{\text{\rm{gen}}}(Y')$.
\end{theorem}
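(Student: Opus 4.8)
The plan is to follow the template of the proof of Theorem~\ref{disttheorem}, but replacing the single $R$-distinguished point $x$ (which need not exist for $r(D)=5$) by the class associated to the root basis $B$. Concretely, set $h = \sum_i n_i\beta_i$; since the intersection matrix $(\beta_i\cdot\beta_j)$ is nonsingular and $h\cdot\beta_j>0$ for all $j$, one checks that $h^2>0$ and, after possibly replacing $h$ by its image under a suitable element, that $h$ lies in $\mathcal{C}^+(Y)$; also $h\in\Lambda$ by construction, so $h\cdot[D_i]=0$ for all $i$. The point of $h$ is that it plays the role that $x$ played in Proposition~\ref{aprop}: it is a class in the positive cone orthogonal to the $[D_i]$ whose ``sign against numerical exceptional curves'' detects effectivity.

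First I would prove the analogue of Proposition~\ref{aprop} for $h$. Using the classification Theorem~(1.1) of \cite{Looij}, pass to a deformation of $(Y,D)$ (with trivial monodromy, by Theorem~\ref{surjper} and Lemma~\ref{definv}) for which every $\beta_i=[C_i]$ is the class of a $-2$-curve. Then $[C_1],\dots,[C_n]$ together with $[D_1],\dots,[D_r]$ span a negative definite sublattice (the $[C_i]$ span $\Lambda\otimes\Ar$, which is negative definite, and the $[D_j]$ are orthogonal to it only up to the overall rank count — more carefully, the full span is $H^2$ modulo at most the single positive direction, so one takes the negative definite sublattice generated by the $C_i$ and $D_j$). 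Apply Proposition~\ref{constructnef} to get a nef and big $H$ orthogonal to all the $C_i$ and $D_j$; then $[H]\in\overline{\mathcal{A}}(Y)$ and $[H]$ is proportional to $h$, giving $h\in\overline{\mathcal{A}}_{\text{\rm{gen}}}$. For the effectivity criterion: if $\alpha$ is a numerical exceptional curve with $\alpha\cdot h\geq 0$ but $\alpha$ not effective, then $-\alpha+[K_Y]=[G]$ for $G$ effective, $H\cdot[G]\leq 0$ forces $H\cdot[G]=0$, hence (by Lemma~\ref{negdef}(ii), exactly as in the claim inside the proof of Proposition~\ref{aprop}) $-\alpha+[K_Y]$, and therefore $\alpha$, is an integral combination of the $[C_i]$ and $[D_j]$; writing $\alpha=\sum c_i[C_i]+\sum d_j[D_j]$ and using $\alpha^2=-1$ together with the fact that a nonzero integral combination of the $[C_i]$ has square $\leq -2$ forces the $[C_i]$-part to vanish, so $\alpha$ lies in the span of the $[D_j]$. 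Thus: $\alpha$ a numerical exceptional curve not in the span of the $[D_j]$ is effective $\iff$ $\alpha\cdot h\geq 0$.

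Granting that, the theorem follows exactly as Theorem~\ref{disttheorem} followed from Proposition~\ref{aprop}. By Lemma~\ref{describeA}(i), $\overline{\mathcal{A}}_{\text{\rm{gen}}}(Y)$ is cut out inside $\mathcal{C}^+_D(Y)$ by the inequalities $\alpha\cdot y\geq 0$ over effective numerical exceptional curves $\alpha$; those $\alpha$ lying in the span of the $[D_i]$ contribute no constraint on $\mathcal{C}^+_D$ (their pairing with any $y\in\mathcal{C}^+_D$ is automatically controlled, or they are handled by Lemma~\ref{remarkafternumeric}), so $\overline{\mathcal{A}}_{\text{\rm{gen}}}(Y)$ is the set of $y\in\mathcal{C}^+_D(Y)$ with $\alpha\cdot y\geq 0$ for all numerical exceptional $\alpha$ with $\alpha\cdot h\geq 0$ and $\alpha$ not in the span of the $[D_i]$. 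Now let $f$ be as in the hypothesis and put $h'=f(h)=\sum_i n_i f(\beta_i)$. Since $f(R_Y)=R_{Y'}$ and $f$ preserves $\mathcal{C}^+$, the set $\{f(\beta_i)\}$ is a root basis of the same combinatorial type for $(Y',D')$ (the numbers $n_i$ and the intersection matrix are preserved), so the analogue of Proposition~\ref{aprop} applies on $Y'$ with the class $h'$. Because $f$ is an isometry fixing the $[D_i]$ and (by $f(\mathcal{C}^+)=\mathcal{C}^+$ and $\alpha\cdot[K_Y]$ being determined by the pairing) sending numerical exceptional curves not in the span of $[D_i]$ to the same, and since $\alpha\cdot h\geq 0\iff f(\alpha)\cdot h'\geq 0$, the isometry $f$ carries the defining inequalities of $\overline{\mathcal{A}}_{\text{\rm{gen}}}(Y)$ to those of $\overline{\mathcal{A}}_{\text{\rm{gen}}}(Y')$, and also $f(\mathcal{C}^+_D(Y))=\mathcal{C}^+_{D'}(Y')$. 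Hence $f(\overline{\mathcal{A}}_{\text{\rm{gen}}}(Y))=\overline{\mathcal{A}}_{\text{\rm{gen}}}(Y')$.

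The main obstacle, I expect, is the lattice-theoretic bookkeeping in the analogue of Proposition~\ref{aprop}: verifying that $h\in\mathcal{C}^+$ (one has $h^2 = \sum n_i(h\cdot\beta_i)>0$ since each $h\cdot\beta_i>0$ and $n_i>0$, but one must also confirm $h$ lies in the correct component, which is where passing to the $-2$-curve deformation and using $[H]$ proportional to $h$ does the job), and that the $[C_i],[D_j]$ together really do span a codimension-$\leq 1$, hence negative definite, sublattice so that Proposition~\ref{constructnef} applies. A secondary subtlety is making sure that after applying $f$ the images $f(\beta_i)$ still satisfy $(\sum n_i f(\beta_i))\cdot f(\beta_j)>0$ — but this is immediate since $f$ is an isometry. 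Once Proposition~\ref{aprop} is in place with $h$ in the role of $x$, the rest is a formal transport-of-structure argument identical to the proof of Theorem~\ref{disttheorem}.
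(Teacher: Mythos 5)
There is a genuine gap, and it sits at the heart of your second paragraph. The root basis $B=\{\beta_1,\dots,\beta_n\}$ is a basis of all of $\Lambda\otimes\Ar$, and since $(Y,D)$ is negative definite, $\Lambda\otimes\Ar$ has signature $(1,n-1)$: it contains the ray $\mathcal{C}^+\cap\Lambda$, and indeed you yourself compute $h^2=\sum_i n_i(h\cdot\beta_i)>0$ with $h$ an integral combination of the $\beta_i$. So your assertion that ``the $[C_i]$ span $\Lambda\otimes\Ar$, which is negative definite'' is false, and with it the two steps that depend on it collapse: (a) there is no negative definite sublattice generated by the $C_i$ and $D_j$ to feed into Proposition~\ref{constructnef} --- these classes span all of $H^2(Y;\Q)$, so the only class orthogonal to all of them is $0$, and no nef and big $H$ of the kind you want exists; and (b) the claim that ``a nonzero integral combination of the $[C_i]$ has square $\leq -2$'' fails (again, $h$ is a counterexample), so you cannot conclude that the $[C_i]$-part of $\alpha$ vanishes. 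This is precisely why the present theorem is not a corollary of Theorem~\ref{disttheorem}: if $B$ spanned a negative definite codimension-one (or smaller) subspace, one would already be in the $R$-distinguished situation.

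The paper's argument at this point is genuinely different and you would need its key idea. One does not project onto the orthogonal complement of the $C_i$; rather, $h$ is itself the class of the effective divisor $H=\sum_i n_iC_i$, which is nef because $H\cdot C_j>0$ for all $j$ (the defining property of the root basis) while $H\cdot G\geq 0$ automatically for irreducible $G$ not among the $C_i$, and big because $H^2=\sum_i n_i(H\cdot C_i)>0$. The substantive work is then to show $H\cdot G>0$ for every irreducible curve $G\neq D_i$, so that Lemma~\ref{remarkafternumeric} applies with this $H$. One rules out $H\cdot G=0$ in two cases: if $G$ is a $-2$-curve then $[G]\in\Lambda$ is orthogonal to the $[C_i]$, which span the nondegenerate lattice $\Lambda$ over $\Q$, a contradiction; if $G=E$ is an exceptional curve (necessarily disjoint from all $C_i$), contracting $E$ produces a pair $(\overline{Y},\overline{D})$ in which the $[C_i]$ still span a rank-$n$ nondegenerate sublattice of $\overline{\Lambda}$ and the $[\overline{D}_j]$ remain independent, forcing $\operatorname{rk}H^2(\overline{Y};\Zee)\geq\operatorname{rk}H^2(Y;\Zee)$, contradicting that $\overline{Y}$ is a blow-down of $Y$. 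Your proposal contains none of this; the transport-of-structure argument in your final paragraph is fine, but only once the effectivity criterion for $h$ is established by the above route.
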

\begin{proof} (Sketch) With notation as in the paragraph preceding the statement of the theorem, let $h =\sum_in_i\beta_i$ have the  property that $h\cdot \beta_i > 0$. By the arguments used in the proof of Theorem~\ref{disttheorem}, it is enough to show that $h\in \overline{\mathcal{A}}_{\text{\rm{gen}}}$ and that, if $\alpha$ is a numerical exceptional curve  and $\alpha$ is not in the span of the $[D_j]$, then $\alpha$ is  effective   $\iff$ $\alpha \cdot h \geq 0$. Also, it is enough to prove this for some deformation of $(Y,D)$, so we can assume $\beta_i = [C_i]$ is the class of a $-2$-curve for all $i$, hence that $h$ is the class of $H=\sum_in_iC_i$. By construction, $H\cdot C_j > 0$ for every $j$, hence $H$ is nef and big. By   Lemma~\ref{remarkafternumeric}, it is enough to show that, if $G$ is an irreducible curve not equal to $D_i$ for any $i$, then $H\cdot G > 0$. Since $H$ is nef, it suffices to rule out the case $H\cdot G =0$, in which case $G^2 < 0$. As $G\neq D_j$ for any $j$, then $G$ is either a $-2$-curve or an exceptional curve. The case where $G$ is a $-2$-curve is impossible since then $G$ is orthogonal to the span of the $[C_i]$, but the $[C_i]$ span $\Lambda$ over $\Q$ and the intersection form is nondegenerate. So $G=E$ is an exceptional curve disjoint from the $C_i$. If $(\overline{Y}, \overline{D})$ is the anticanonical pair obtained by contracting $E$, then the $[C_i]$ define classes in  $\overline{\Lambda} = \Lambda(\overline{Y}, \overline{D})$. Since the intersection form $(C_i\cdot C_j)$ is nondegenerate, the rank of $\overline{\Lambda}$ is at least that of the rank of $\Lambda$. It is easy to check that the classes of $\overline{D}_1, \dots, \overline{D}_r$ are linearly independent: if say $E$ meets $D_1$, then the intersection matrix of $\overline{D}_2, \dots, \overline{D}_r$ is still negative definite, and then (ii) of Lemma~\ref{negdef} (with $F = \overline{D}_1$ and $G_1, \dots, G_n = \overline{D}_2, \dots, \overline{D}_r$) shows that the classes of $\overline{D}_1, \dots, \overline{D}_r$ are linearly independent. Hence the rank of $H^2(\overline{Y}; \Zee)$ is greater than or equal to the rank of $H^2(Y; \Zee)$, which contradicts the fact that $\overline{Y}$ is obtained from $Y$ by contracting an exceptional curve.
\end{proof}

\section{Some  examples}

\begin{example} We give a series of examples satisfying the hypotheses of Theorem~\ref{disttheorem} where the number of components and the multiplicities are arbitrarily large. Let $(\overline{Y}, \overline{D})$ be the anticanonical pair obtained by making $k+6$ infinitely near blowups starting with the double point of a nodal cubic. Thus $\overline{D} = \overline{D}_0 + \cdots + \overline{D}_{k+6}$, where $\overline{D}_0^2 = -k$, $\overline{D}_i^2 = -2$, $1\leq i\leq k+5$, and  $\overline{D}_{k+6}^2 = -1$. Now blow up $N \geq 1$ points $p_1, \dots, p_N$ on $\overline{D}_{k+6}$, and let $(Y,D)$ be the resulting anticanonical pair. Note that $(Y,D)$ is negative definite as long as $k\geq 3$ or $k=2$ and $N\geq 2$. Clearly $r(D) = k+7$ and $K_Y^2 = 3-k-N$. It follows that $\Lambda = \Lambda(Y,D)$ has rank $N$. If $E_1, \dots, E_N$ are the exceptional curves corresponding to $p_1, \dots, p_N$, then the classes $[E_i] - [E_{i+1}]$ span a negative definite root lattice of type $A_{N-1}$ in $\Lambda$. By making all of the blowups infinitely near to the first point, we see that all of the classes $[E_i] - [E_{i+1}]$ lie in $R$. Hence $(Y,D)$ satisfies the hypotheses of Theorem~\ref{disttheorem}.
\end{example} 

Next we turn to examples where the rank of $\Lambda$ is small.  The case where the rank of $\Lambda$ is $1$ is covered by Theorem~\ref{disttheorem}, as well as the case where the rank of $\Lambda$ is $2$ and $R\neq \emptyset$. Note that, conjecturally at least, the case where $R\neq \emptyset$ should be related to the question of whether the dual cusp singularity deforms to an ordinary double point. It is easy to construct  examples where the rank of $\Lambda$ is $2$ and with $R\neq \emptyset$: begin with an anticanonical pair $(\hat{Y}, \hat{D})$ where the rank of $\Lambda(\hat{Y}, \hat{D})$ is $1$, locate a component $\hat{D}_i$ such that there exists an exceptional curve $E$ on $\hat{Y}$ with $E \cdot \hat{D}_i = 1$, and blow up a point of $\hat{D}_i$ to obtain a new anticanonical pair $(Y,D)$ together with  exceptional curves $E,E'$ (where we continue to denote by $E$ the pullback to $Y$ and by $E'$ the new exceptional curve), such that $[E] -[E'] \in R$. So our  interest is in finding examples where $R=\emptyset$.

\begin{remark} In case the rank of $\Lambda$ is $2$ and $R\neq \emptyset$, it is easy to see that either $(\overline{\mathcal{A}}_{\text{\rm{gen}}} \cap \Lambda)/\Ar^+$ is a closed (compact) interval or $\overline{\mathcal{A}}_{\text{\rm{gen}}} \cap \Lambda =\mathcal{C}^+\cap \Lambda$ (and in fact both cases arise). In either case, there is at most one wall $W^\beta$ with $\beta \in R$  passing through the interior of $\overline{\mathcal{A}}_{\text{\rm{gen}}} \cap \Lambda$, and hence either $R=\emptyset$ or $R =\{\pm \beta\}$.
\end{remark}

\begin{example}
 We give an example where the rank of $\Lambda$ is $2$ and there are no $\beta \in \Lambda$ such that $\beta^2 = -2$, in particular $R=\emptyset$, hence the condition $f(R) = R$ is automatic for every isometry $f$, and of an isometry $f$ which preserves $\mathcal{C}^+$ and the classes $[D_i]$ but not the generic ample cone. Let $(\overline{Y}, \overline{D})$ be the anticanonical pair obtained by making $9$ infinitely near blowups starting with the double point of a nodal cubic. Thus $\overline{D} = \overline{D}_0 + \cdots + \overline{D}_9$, where $\overline{D}_0 = 3H -2E_1 -\sum_{i=2}^9E_i$, $\overline{D}_i = E_i - E_{i+1}$, $1\leq i \leq 8$, and $\overline{D}_9 = E_9$. Make two more blowups, one at a point $p_{10}$ on $\overline{D}_9$, and one at a point $p_{11}$ on $\overline{D}_4$. This yields an anticanonical pair $(Y,D)$ with $D_0 = 3H -2E_1 -\sum_{i=2}^9E_i$, $D_i = E_i - E_{i+1}$, $i>0$ and $i\neq 4$, and $D_4 = E_4 - E_5 - E_{11}$. Thus 
$$(-d_0, \dots, -d_9) = (3,2,2,2,3,2,2,2,2,2),$$
i.e.\  $D$ is of type $\displaystyle \begin{pmatrix} 3&3\\3&5\end{pmatrix}$, with dual cycle  $\displaystyle \begin{pmatrix} 6&8\\0&0\end{pmatrix}$ in the notation of \cite{FriedmanMiranda}. Set
\begin{align*}
G_1 &= 5H - 2\sum_{i=1}^4E_i - \sum_{i=5}^{10}E_i -E_{11};\\
G_2 &= 10H-5\sum_{i=1}^4E_i-\sum_{i=5}^{10}E_i -4E_{11}.
\end{align*}
It is straightforward to check that $(G_i\cdot D_j) = 0$ for $i=1,2$ and $0\leq j \leq 9$. Hence $G_1, G_2 \in \Lambda$. Also,
$$G_1^2 = 2; \qquad G_2^2 = -22; \qquad G_1\cdot G_2 = 0.$$
The corresponding quadratic form 
$$q(n,m) = (nG_1 + mG_2)^2 = 2n^2 - 22m^2$$
has discriminant $-44=-2^2\cdot 11$. Note that this is consistent with the fact that the discriminant of the dual cycle is
$$\det \begin{pmatrix} -6&2\\2&-8\end{pmatrix} = 44.$$
It is easy to see that $G_1$ and $G_2$ are linearly independent mod $2$ and hence span a primitive lattice, which must therefore equal $\Lambda$.

First we claim that there is no element of $\Lambda$ of square $-2$. This is equivalent to the statement that there is no solution in integers to the equation $n^2 - 11m^2 = -1$, i.e.\ that the fundamental unit in $\Zee[\sqrt{11}]$ has norm $1$. But clearly if there were an integral solution to $n^2 - 11m^2 = -1$, then since $-11\equiv 1 \bmod 4$, we could write $-1$ as a sum of squares mod $4$, which is impossible. In fact, the fundamental unit in $\Zee[\sqrt{11}]$ is $10 + 3\sqrt{11}$. Thus, if $R$ is the set of roots for $(Y,D)$, then $R=\emptyset$. In particular, any isometry $f$ trivially satisfies: $f(R) = R$.

Finally, we claim that there is an isometry $f$ of $H^2(Y; \Zee)$  such that $f([D_i]) = [D_i]$ for all $i$ and $f(\mathcal{C}^+) = \mathcal{C}^+$, but such that $f$ does not preserve the generic ample cone. Note that the unit group $U$ of $\Zee[\sqrt{11}]$ acts as a group of isometries on $\Lambda$, and hence acts as a group of isometries (with $\Q$-coefficients) of the lattice $H^2(Y; \Q) = (\Lambda \otimes \Q) \oplus \bigoplus_i\Q[D_i]$, fixing the classes $[D_i]$. Also, any isometry of $\Lambda$ which is trivial on the discriminant group $\Lambda\spcheck/\Lambda$ extends to an integral isometry of $H^2(Y; \Zee)$ fixing the $[D_i]$. Concretely, the discriminant form $\Lambda\spcheck/\Lambda \cong \Zee/2\Zee \oplus \Zee/22\Zee$.  If $\mu = 10 + 3\sqrt{11}$, then it is easy to check that the automorphism of $\Lambda$ corresponding to $\mu^2 = 199+ 60\sqrt{11}$ acts trivially on $\Lambda\spcheck/\Lambda$ and hence defines  an isometry $f$ of $H^2(Y; \Zee)$ fixing the   $[D_i]$.  Then $f$ acts freely on $(\mathcal{C}^+\cap \Lambda)/\Ar^+$, which is just a copy of $\Ar$ (and $f$ acts on it via translation). But the intersection of the generic ample cone with $\Lambda$ has the nontrivial wall $W^{E_{11}}$, so that the intersection cannot be all of $\mathcal{C}^+\cap \Lambda$. It then follows that $f^{\pm1}$ does not preserve the generic ample cone. Explicitly,  let $(\hat{Y}, \hat{D})$ be the surface obtained by contracting $E_{11}$ and let $\hat{G}_1 = 4G_1-G_2 = 10H - 3\sum_{i=1}^{10}E_i$ be the pullback of the  positive generator of $\Lambda(\hat{Y}, \hat{D})$. Thus $\hat{G}_1$ is nef and big, so that $\hat{G}_1\in \overline{\mathcal{A}}_{\text{\rm{gen}}}$. Clearly $\hat{G}_1\in W^{E_{11}}$. If $A = \displaystyle \begin{pmatrix} a & 11b\\b&a\end{pmatrix}$ is the isometry of $\Lambda$ corresponding to multiplication by the unit $a + b\sqrt{11}$, then $A(G_1) = aG_1 +bG_2$, $A(G_2) = 11bG_1 + aG_2$,  and $A( \hat{G}_1) = (4a-11b)G_1 + (4b-a)G_2$. Thus
$$E_{11} \cdot A( \hat{G}_1) = (4a-11b) + 4(4b-a) = 5b,$$
hence $E_{11} \cdot A( \hat{G}_1) < 0$ if $b< 0$. Taking $f^{-1}$, which corresponds to $199- 60\sqrt{11}$, we see that  $f^{-1}(\hat{G}_1)\notin \overline{\mathcal{A}}_{\text{\rm{gen}}}$.  
\end{example}

\begin{example} In this example,   the rank of $\Lambda$ is $2$ and $R=\emptyset$, but there exist infinitely many $\beta \in \Lambda$ such that $\beta^2 = -2$. The condition $f(R) = R$ is again automatic for every isometry $f$, and reflection about every $\beta \in \Lambda$ with $\beta^2=-2$ is an isometry which  preserves $\mathcal{C}^+$ and the classes $[D_i]$ but not the generic ample cone.

As in the previous example, let $(\overline{Y}, \overline{D})$ be the anticanonical pair obtained by making $9$ infinitely near blowups starting with the double point of a nodal cubic. Thus $\overline{D} = \overline{D}_0 + \cdots + \overline{D}_9$, where $\overline{D}_0 = 3H -2E_1 -\sum_{i=2}^9E_i$, $\overline{D}_i = E_i - E_{i+1}$, $1\leq i \leq 8$, and $\overline{D}_9 = E_9$. Make two more blowups, one at a point $p_{10}$ on $\overline{D}_9$, and one at a point $p_{11}$ on $\overline{D}_0$. This yields an anticanonical pair $(Y,D)$ with $D_0 = 3H -2E_1 -\sum_{i=2}^9E_i-E_{11}$ and $D_i = E_i - E_{i+1}$, $1\leq i\leq 9$. Thus 
$$(-d_0, \dots, -d_9) = (4,2,2,2,2,2,2,2,2,2),$$
i.e.\  $D$ is of type $\displaystyle \begin{pmatrix} 4\\9\end{pmatrix}$, with dual cycle  $\displaystyle \begin{pmatrix} 12\\1\end{pmatrix}$ in the notation of \cite{FriedmanMiranda}. Set
\begin{align*}
G_1 &= 10H - 3\sum_{i=1}^{10}E_i ;\\
G_2 &= 3H- \sum_{i=1}^{10}E_i +E_{11}.
\end{align*}
It is straightforward to check that $(G_i\cdot D_j) = 0$ for $i=1,2$ and $0\leq j \leq 9$. Hence $G_1, G_2 \in \Lambda$. Also,
$$G_1^2 = 10; \qquad G_2^2 = -2 ; \qquad G_1\cdot G_2 = 0.$$
The corresponding quadratic form 
$$q(n,m) = (nG_1 + mG_2)^2 = 10n^2 -  2m^2$$
has discriminant $-20=-2^2\cdot 5$. Note that this is consistent with the fact that the discriminant of the dual cycle is
$$\det \begin{pmatrix} -12&2\\2&-2\end{pmatrix} = 20.$$
It is easy to see that $G_1$ and $G_2$ are linearly independent mod $2$ and hence span a primitive lattice, which must therefore equal $\Lambda$.

To give a partial description of $\overline{\mathcal{A}}_{\text{\rm{gen}}} \cap \Lambda$, note that (as  for $\hat{G}_1$ in the previous example) $G_1$ is the pullback to $Y$ of a positive generator for $\Lambda(\hat{Y}, \hat{D})$, where $\hat{Y}$ denotes the surface obtained by contracting $E_{11}$. Thus $G_1$ is nef and big, so that $G_1\in \overline{\mathcal{A}}_{\text{\rm{gen}}}$ and also $G_1\in W^{E_{11}}$. Hence 
$$\mathcal{C}^+\cap \Lambda = \{nG_1+ mG_2: 5n^2 - m^2 > 0, n>0\},$$
i.e.\ $n>0$ and $-n\sqrt{5} < m < n\sqrt{5}$. 
The condition $E_{11} \cdot (nG_1+ mG_2) \geq 0$ gives $m\leq 0$. To get a second inequality on $n$ and $m$, let
$$E' = 5H - 4E_{11} - \sum_{i=1}^{10}E_i.$$
Then $(E')^2 = E' \cdot K_Y = -1$, and $H\cdot E' > 0$. Hence $E'$ is effective. (In fact one can show that $E'$ is generically the class of an exceptional curve.) Thus, for all $nG_1+ mG_2 \in \overline{\mathcal{A}}_{\text{\rm{gen}}}$,
$$E' \cdot (nG_1+ mG_2) = 20n + 9m \geq 0,$$
hence 
$$\overline{\mathcal{A}}_{\text{\rm{gen}}} \cap \Lambda \subseteq \{nG_1 + mG_2: n > 0, -{\textstyle\frac{20}{9}}n \leq m \leq 0.\}.$$
Next we describe the classes $\beta\in \Lambda$ with $\beta^2 = -2$. The element $\beta = aG_1 + bG_2\in \Lambda$ satisfies $\beta^2 = -2$ $\iff$ $5a^2 -b^2 =-1$, i.e.\ $\iff$ $b+ a\sqrt{5}$ is a unit in the (non-integrally closed) ring $\Zee[\sqrt{5}]$. For example, the class $G_2$ corresponds to $1$; as we have seen, the wall $W^{G_2} = W^{E_{11}}$. The fundamental unit in $\Zee[\sqrt{5}]$ is easily checked to be $9 + 4\sqrt{5}$. However, since we are only concerned with walls which are rays in the fourth quadrant $\{(nG_1+ mG_2): n > 0, m< 0\}$, we shall consider instead $\pm(9-4\sqrt{5})$, and shall choose the sign corresponding to $\beta = 4G_1 - 9G_2$. Note that
$$\beta\cdot (nG_1+ mG_2) = 40n + 18m = 0 \iff E'\cdot (nG_1+ mG_2) = 0.$$
Hence $W^\beta = W^{E'}$. Moreover, for every $\gamma\in \Lambda$ such that $\gamma^2 = -2$ and such that the wall $W^\gamma$ passes through the fourth quadrant, either $W^\gamma =W^\beta$ or  the corresponding ray $W^\gamma$ lies below $W^\beta$. Thus, for every $\gamma \in \Lambda$ with $\gamma^2 = -2$, $r_\gamma$ does not preserve $\overline{\mathcal{A}}_{\text{\rm{gen}}} \cap \Lambda$. Hence $R=\emptyset$. 

Note that, aside from the isometries $r_\beta$, where $\beta^2 = -2$, one can also construct isometries of infinite order preserving $\mathcal{C}^+$ and the classes $[D_i]$ which do not fix preserve $\overline{\mathcal{A}}_{\text{\rm{gen}}}$ using multiplication by  fundamental units in $\Zee[\sqrt{5}]$, as in the previous example.
\end{example}

\begin{remark} The exceptional curve $E'$ used in the above example is part of a general series of such. For $n\geq 0$, let $Y$ be the blowup of $\Pee^2$ at $2n+1$ points $p_0, \dots, p_{2n}$, with corresponding exceptional curves $E_0, \dots, E_{2n}$, and consider the divisor
$$A= nH - (n-1)E_0  - \sum_{i=1}^{2n}E_i.$$
Then $A^2 = A\cdot K_Y = -1$, and it is easy to see that there exist $p_0, \dots, p_{2n}$ such that $A$ is the class of an exceptional curve. In fact, if $\mathbb{F}_1$ is the blowup of $\Pee^2$ at $p_0$, then $\Sigma = nH - (n-1)E_0$ is very ample on $\mathbb{F}_1$ and, for an anticanonical divisor  $D\in |-K_{\mathbb{F}_1}| = |3H-E_0|$,  $\Sigma \cdot D = 2n+1$. From this it is easy to see that we can choose the points $p_1, \dots, p_{2n}$ to lie on the image of $D$ in $\Pee^2$, and hence we can arrange the blowup $Y$ to have (for example) an irreducible anticanonical nodal curve.
\end{remark}

\end{document}